\theoremstyle{plain}
\newtheorem{theorem}{Theorem}[section]
\newtheorem{proposition}[theorem]{Proposition}
\newtheorem{corollary}[theorem]{Corollary}
\theoremstyle{definition}
\newtheorem{definition}[theorem]{Definition}
\newtheorem{notation}[theorem]{Notation}
\theoremstyle{remark}
\newtheorem{example}[theorem]{Example}
\newtheorem{remark}[theorem]{Remark}
\crefname{axiom}{axiom}{axioms}
\crefname{example}{example}{examples}
\crefname{formula}{formula}{formulae}
\NewDocumentCommand\CatStyle{ m }{\mathcal{#1}}
\NewDocumentCommand\IntCatStyle{ m }{\mathbf{#1}}
\NewDocumentCommand\MultiCatStyle{ m }{\mathfrak{#1}}
\DeclareMathOperator{\CatOp}{Cat}
\NewDocumentCommand{\Cat}		{ O{}O{} }	{\ensuremath{{#2}\CatOp_{#1}}}
\DeclareMathOperator{\MonCatOp}{Mon}
\NewDocumentCommand{\MonCat}	{ O{} }		{\ensuremath{\MonCatOp_{#1}}}
\DeclareMathOperator{\SymMonCatOp}{SymMonCat}
\NewDocumentCommand{\SymMonCat}	{ O{} }		{\ensuremath{\SymMonCatOp_{#1}}}
\DeclareMathOperator{\MulticatOp}{Multicat}
\NewDocumentCommand{\Multicat}	{ O{} }		{\ensuremath{\MulticatOp_{#1}}}
\DeclareMathOperator{\GraphOp}{Graph}
\NewDocumentCommand{\Graph}		{ O{} }		{\ensuremath{\GraphOp_{#1}}}
\DeclareMathOperator{\SetCat}{Set}
\DeclareMathOperator{\ICatOp}{ICat}
\NewDocumentCommand{\ICat}		{ O{}O{} }	{\ensuremath{{#2}\ICatOp_{#1}}}
\DeclareMathOperator{\SCatOp}{SCat}
\NewDocumentCommand{\SCat}		{ O{}O{} }	{\ensuremath{{#2}\SCatOp_{#1}}}
\DeclareMathOperator{\IdOp}					{id}
\NewDocumentCommand\Id			{ O{} }		{\IdOp_{#1}}
\NewDocumentCommand\Source		{ O{} }		{\mathrm{s}_{#1}}
\NewDocumentCommand\Target		{ O{} }		{\mathrm{t}_{#1}}
\DeclareMathOperator\HomOp 					{Hom}
\NewDocumentCommand\Hom			{ O{} } 	{\HomOp_{#1}}
\NewDocumentCommand\Comp		{ O{} }		{\mathbin{\circ_{#1}}}
\NewDocumentCommand\VComp		{ O{} }		{}
\NewDocumentCommand\HComp		{ O{} }		{}
\NewDocumentCommand\MonProd		{O{}O{}}	{\mathbin{\otimes_{#1}}}
\NewDocumentCommand\MonUnit		{ O{} }		{\mathbb{I}_{#1}}
\NewDocumentCommand\MonAssoc	{ O{} }		{\alpha_{#1}}
\NewDocumentCommand\MonUnitorL	{ O{} }		{\lambda_{#1}}
\NewDocumentCommand\MonUnitorR	{ O{} }		{\rho_{#1}}
\NewDocumentCommand\MonSym		{ O{} }		{\sigma_{#1}}
\NewDocumentCommand\Prod		{ O{} } 	{\mathbin{\times_{#1}}}
\NewDocumentCommand\Terminal	{ O{} } 	{\mathbb{1}_{#1}}
\NewDocumentCommand\Pullback 	{O{}O{}}	{\mathbin{{{\leavevmode\vphantom{\Prod}}_{#1}}{\Prod}_{#2}}}
\NewDocumentCommand\Diag		{ O{} } 	{\delta_{#1}}
\NewDocumentCommand\Aug			{ O{} } 	{\mathrm{e}_{#1}}
\NewDocumentCommand\Inv			{ O{} } 	{\mathrm{i}_{#1}}
\NewDocumentCommand\Adjoint		{}			{\dashv}
\DeclareMathOperator\Op						{op}
\DeclareMathOperator{\Ind}					{ind} %
\DeclareMathOperator{\Dis}					{dis} %
\NewDocumentCommand{\TiC}	{}{\vdash}
\DeclarePairedDelimiter{\ExternalizationBrk}{\lbrack}{\rbrack}
\NewDocumentCommand{\Externalization} {m O{}} {\ExternalizationBrk{#1}^{#2}}
\NewDocumentCommand{\intCat}	{O{}}	{\ensuremath{\Cat_{#1}}}
\DeclarePairedDelimiterXPP{\nStEq}[3]{}{\lbrack}{\rbrack}{}{#1 =_{#3} #2}
\NewDocumentCommand{\Eval}		{O{}}	{\mathrm{e}_{#1}}
\DeclarePairedDelimiterX{\PowObj}[2]{\lbrack}{\rbrack}{#1, #2}
\DeclarePairedDelimiterX{\WLim}[2]{\lbrace}{\rbrace}{#2, #1}
\DeclarePairedDelimiterX{\pairEnc}[2]{\langle}{\rangle}{#1, #2}
\DeclarePairedDelimiterX{\PSh}[2]{\lbrack}{\rbrack}{#1, #2}
\DeclareMathOperator{\MCatOp}{MCat}
\NewDocumentCommand{\MCat}		{ O{}O{} }	{\ensuremath{{#2}\MCatOp_{#1}}}
\NewDocumentCommand\MC{ m }{\MultiCatStyle{#1}}
\DeclareMathOperator{\IdMonOp}	{ID} %
\NewDocumentCommand{\IdMon}		{ O{} } {\IdMonOp} %
\NewDocumentCommand{\FreeCatUnit}{ O{} } {\eta_{#1}}
\NewDocumentCommand{\FreeCatMult}{ O{} } {\bullet_{#1}}
\NewDocumentCommand{\Unit} {} {\eta}
\NewDocumentCommand{\Counit} {} {\epsilon}
\DeclareMathOperator{\FreeCat} {fc}
\NewDocumentCommand{\IndP}		{ O{} } {{\Ind(#1)}^{+}}
\NewDocumentCommand\MCId{ O{} } {\mathrm{i}_{#1}}
\NewDocumentCommand\MCSource{ O{} } {\sigma_{#1}}
\NewDocumentCommand\MCTarget{ O{} } {\tau_{#1}}
\NewDocumentCommand\MCComp{ O{} } {\cdot_{#1}}
\NewDocumentCommand \Def			{ m }	{\emph{#1}}
\NewDocumentCommand \Iso			{ }		{\mathrel{\cong}}
\NewDocumentCommand \Entry			{ O{} } {\mathhyphen_{#1}}
\NewDocumentCommand \MathDash		{ }		{\mathrm{-}}
\NewDocumentCommand{\DefEq} {} {\vcentcolon=}
\DeclarePairedDelimiterX{\pair}[2]{\lparen}{\rparen}{#1, #2}
\NewDocumentCommand{\A} {} {\IntCatStyle{A}}
\NewDocumentCommand{\ExtA} {} {\CatStyle{A}}
\ProvideDocumentCommand{\B} {} {\IntCatStyle{B}}
\RenewDocumentCommand{\C} {} {\IntCatStyle{C}}
\NewDocumentCommand{\IntC} {} {\IntCatStyle{C}}
\NewDocumentCommand{\IntD} {} {\IntCatStyle{D}}
\ProvideDocumentCommand{\M} {} {\CatStyle{M}}
\NewDocumentCommand{\E} {} {\CatStyle{E}}
\NewDocumentCommand{\ExtV} {} {\CatStyle{V}}
\NewDocumentCommand{\IntV} {} {\IntCatStyle{V}}
\NewDocumentCommand{\MCV} {} {\MultiCatStyle{V}}
\NewDocumentCommand{\MCX} {} {\MultiCatStyle{X}}
\NewDocumentCommand{\MCY} {} {\MultiCatStyle{Y}}
\NewDocumentCommand{\X} {} {\IntCatStyle{X}}
\NewDocumentCommand{\Y} {} {\IntCatStyle{Y}}
\NewDocumentCommand{\W} {} {\IntCatStyle{W}}
\NewDocumentCommand{\IntW} {} {\IntCatStyle{W}}
\NewDocumentCommand{\IdxC} {} {\CatStyle{C}}
\NewDocumentCommand{\IdxW} {} {\CatStyle{W}}
\NewDocumentCommand{\Z} {} {\IntCatStyle{Z}}
\begin{document}

\hyphenation{cat-e-go-ry cat-e-go-ries multi-cat-e-go-ry multi-cat-e-go-ries}
\hyphenation{car-te-sian car-te-sian-ness}
\hyphenation{mo-nad mo-nads mo-na-dic group-oid group-oids}
\hyphenation{pseu-do-func-tor}
	
\title{Internal enriched categories}
\author{Enrico Ghiorzi\thanks{Research funded by Cambridge Trust and EPSRC.}}
\maketitle

\abstract{
We introduce the theory of enrichment over an internal monoidal category as a common generalization of both the standard theories of enriched and internal categories.
The aim of the paper is to justify and contextualize the new notion by comparing it to other known generalizations of enrichment: namely, those for indexed categories and for generalized multicategories.
It turns out that both of these notions are closely related to internal enrichment and, as a corollary, to each other.
}

\tableofcontents

\addsec{Introduction}

Size issues prevent many fundamental results in category theory from being straightforward.
At the core, the problem is that categories, even large ones, are said to be complete if they have limits for merely all \emph{small} diagrams.
Consider the following examples:
\begin{itemize}
\item The adjoint functor theorem states that a functor \(\CatStyle{C} \to \CatStyle{D}\) which preserves limits and whose domain category \(\CatStyle{C}\) is complete is a right adjoint.
Its left adjoint is defined by using the limits of certain diagrams in \(\CatStyle{C}\) which have to be small, since \(\CatStyle{C}\) is complete only relatively to small diagrams.
To ensure that, it is necessary to impose a further hypothesis, namely the solution set condition, which is set-theoretical in nature.
\item Preorders can be identified with small categories having at most one arrow in each homset, with meets and joins corresponding to limits and colimits in the associated category.
Remarkably, the existence of all meets in a preorder implies the existence of all joins and the other way around, as they can be defined in terms of each other.
This suggests that complete categories should be cocomplete, by defining colimits as limits of suitable diagrams.
Unfortunately, that only holds true for small categories, as the size of such diagrams can be as large as the size of the whole category.
\end{itemize}
Small categories are unaffected by size issues, but unfortunately, the only small complete categories are the complete preorders (the original source for this is, to the best of our knowledge, an unpublished theorem by Freyd).
This is the reason why completeness and cocompleteness are not generally regarded as equivalent notions: most complete categories are large, and thus potentially not cocomplete.

To overcome the aforementioned difficulties, we turn our attention to internal category theory, which is intrinsically unconcerned about size and features notable examples of complete categories, such as the category of modest sets in the effective topos (or, more precisely, in its subcategory of assemblies)  \autocite{hyland1982effective,hyland1988small,Hyland90discreteobjects}.
Our intuition is proven correct in that, in the internal context, the notion of completeness is particularly well-behaved: for example, complete internal categories are also cocomplete, and the adjoint functor theorem holds without any sort of solution set condition \autocite{ghiorzi2020complete}.

Standard category theory is intrinsically biased towards sets, in that (locally small) categories are implicitly enriched over \(\SetCat\), the category of sets and functions.
Unfortunately, in the internal context there is no immediate notion of large category: the usual substitute is via a theory of indexed categories, but that has the disadvantage that the notion of large is not intrinsic to the setting.
That limits the expressivity of the theory of internal categories, and indeed a considerable portion of standard category theory is problematic from the internal point of view: for example, the concept of presheaf cannot be defined and the Yoneda lemma cannot even be stated.

This paper aims to combine the pleasantness of internal categories with the expressivity of enriched categories by developing a theory of internal enrichment.
After recalling some convenient background material in \Cref{sec:int-cats,sec:mon-cats,sec:indexed-cats,sec:ext-int-cats}, in \Cref{sec:int-enriched-cats} we introduce the theory of internal enrichment over an internal monoidal category, obtained by internalizing the standard theory of enrichment over a monoidal category.
After that, we place internal enrichment in the rich landscape of notions of generalized enrichment.
It turns out, indeed, that internal enriched categories are closely related to both enriched indexed categories \autocite{shulman2008framed,shulman2013enriched} and enriched generalized multicategories \autocite{leinster1999generalized,Leinster02GeneralizedEnrichment,Leinster04HigherOperads}, as discussed in \Cref{sec:indexed-enriched-cats,sec:enriched-gen-multicats} respectively.
Thus, as a corollary, we establish a connection between the notions of enrichment for indexed categories and for generalized multicategories which, to the best of our knowledge, was previously unknown.

From an application perspective, we believe internal enrichment can be a valuable tool in the study of categorical models for polymorphism, in theoretical computer science.
Indeed, Eugenio Moggi originally suggested that the effective topos contains a small complete subcategory as a way to understand how realizability toposes give rise to models for impredicative polymorphism, and concrete versions of such models first appeared in \textcite{girard1972interpretation}.
Moreover, \textcite{reynolds1984polymorphism} noted that set theory is inadequate to treat polymorphism, and \textcite{pitts1987polymorphism} overcame the issue by internalizing the model in a suitable topos.
Finally, \textcite{hasegawa1994relational} has presented a model for higher-order polymorphic lambda-calculus based on enrichment over the category of partial equivalence relations, and noticed that inconveniently such category is incomplete, although it is internally complete in the effective topos \autocite{hyland1988small}.
Thus, internal and enriched categories are essential tools in the treatment of polymorphism. %
Internal enriched categories, being their common generalization and carrying the benefits of both, would remove the necessity of picking one and allow for a natural extension of the known models.

This paper is based on the author's doctoral research \autocite{ghiorzi2019thesis}.
Further developments on the theory of internal enrichment, particularly in the direction of a theory of completeness, are already contained in the dissertation.
We intend to present that material in a dedicated paper, whose preparation is in progress.

\section{Internal categories}\label{sec:int-cats}

In this section we quickly set some notation with regard to internal categories, without actually discussing their theory.
Such notation is mostly standard, as it is fundamentally similar to that used in well-known textbooks \autocite{MacLane98CategoriesWM,Borceaux94HandbookCA}.
A biref account of the theory of internal categories adopting the notation of this section can be found in \textcite{ghiorzi2020complete}.

In the context of this paper, let \(\E\) be a category with finite limits, which we regard as our ambient category.
To be precise, in particular we require \(\E\) to have a cartesian monoidal structure, that is, a monoidal structure given by a functorial choice of binary products \(\Prod \colon \E \Prod \E \to \E\) and a chosen terminal object \(\Terminal\).

We start by giving the definitions of internal category, functor and natural transformation of \(\E\).

\begin{definition}[internal category]
	An \Def{internal category} \(\A\) in \(\E\) is a diagram
	\begin{equation*}
		\begin{tikzcd}[column sep = large]
			A_0
				\ar[r, "{\Id[\A]}" description]
			& A_1
				\ar[l, "{\Source[\A]}"', shift right=2, bend right]
				\ar[l, "{\Target[\A]}", shift left=2, bend left]
			& A_1 \Pullback[\Source[\A]][\Target[\A]] A_1
				\ar[l, "{\Comp[\A]}"']
		\end{tikzcd}
	\end{equation*}
	in \(\E\) (where \(A_1 \Pullback[\Source[\A]][\Target[\A]] A_1\) is the pullback of \(\Source[\A]\) and \(\Target[\A]\)) satisfying the usual axioms for categories.
\end{definition}

As a category with finite limits, \(\E\) is a model for cartesian logic, or finite limit logic.
So, we will frequently use its internal language to ease the notation.
There are multiple accounts of the internal language of categories in the literature.
In particular, we shall follow \textcite{johnstone2002sketches,crole1993categories}, but, since we only make a basic use of the internal language, other references would be equally adequate.
We now introduce conventions to ease the use of the internal language in relation to internal categories, by bringing it closer to the standard notation of category theory.

\begin{notation}
	Given an internal category \(\A\) and, in some context, terms \(a_0 \colon A_0\), \(a_1 \colon A_0\) and \(f \colon A_1\), we shall write \(f \colon a_0 \to_{\A} a_1\)
	instead of (the conjunction of) the formulas \(\Source[\A](f) = a_0 \colon A_0\) and \(\Target[\A](f) = a_1 \colon A_0\).
	(The category to which the arrows belong can be omitted when it is already clear from the context.)
	Moreover, given terms
    \(g_0 \colon a_0 \to_{\A} a_{1}\) and
    \(g_1 \colon a_1 \to_{\A} a_{2}\),
    we shall write the composition of \(g_0\) and \(g_1\), i.e., the term \(g_1 \Comp[\A] g_0 \colon A_1\), as
    \[
    	a_0 \xrightarrow{g_0} a_1 \xrightarrow{g_1} a_2.
    \]
	Then, we can use the familiar notation for commuting diagrams even in the internal language.
\end{notation}

Leveraging the notation just introduced, we define functors of internal categories.
Then internal categories and their functors form a category \(\Cat[\E]\).

\begin{definition}[internal functor]
	Let \(\A\) and \(\B\) be internal categories in \(\E\).
	A \Def{functor} of internal categories \(F \colon \A \to \B\) is given by a pair of arrows \(F_0 \colon A_0 \to B_0\) and \(F_1 \colon A_1 \to B_1\) such that
	\begin{align*}
		f \colon a_0 \to_{\A} a_1 &\TiC F_1(f) \colon F_0(a_0) \to_{\B} F_0(a_1), \\
		a \colon A_0 &\TiC F_1 \Id[\A](a) = \Id[\B] F_0(a) \colon B_1,
	\end{align*}
	and satisfying the usual functoriality axioms.
	The composition \(G \Comp[\Cat[\E]] F \colon \A \to \C\) (shortened in \(GF\) for brevity) of two consecutive internal functors \(F \colon \A \to \B\) and \(G \colon \B \to \C\), and the identity functor \(\Id[\Cat[\E]](\A) \colon \A \to \A\) (shortened in \(\Id(\A)\) for brevity) for an internal category \(\A\), are defined in the usual way.
\end{definition}

The category of internal categories is well-behaved with respect to slicing, as the following remark makes clear.

\begin{remark}
	Let \(\E'\) be another category with finite limits, and \(F \colon \E \to \E'\) a functor preserving finite limits.
	Then, there is a functor \(F \colon \Cat[\E] \to \Cat[\E']\) (with abuse of notation) applying \(F\) to the underlying graph of internal categories.
\end{remark}

In the following remark, we notice some useful properties of \(\Cat[\E]\) in relation to slicing and change of base.

\begin{remark}
	\label{rmk:cat-reindexing}
	Let \(i \colon J \to I\) be an arrow in \(\E\).
	Then, there is an adjunction \(i_! \Adjoint i^* \colon \sfrac{\E}{I} \to \sfrac{\E}{J}\) where the functor \(i_! \colon \sfrac{\E}{J} \to \sfrac{\E}{I}\) is given by post-composition with \(i\), and the functor \(i^* \colon \sfrac{\E}{I} \to \sfrac{\E}{J}\) is given by pullback along \(i\).
	This adjunction extends to internal categories, yielding \(i_! \Adjoint i^* \colon \Cat[\sfrac{\E}{I}] \to \Cat[\sfrac{\E}{J}]\).
	In particular, the unique arrow \(!_I \colon I \to \Terminal\) yields an adjunction \(I_! \Adjoint I^* \colon \Cat[\E] \to \Cat[\sfrac{\E}{I}]\).
\end{remark}

We now define natural transformations of internal categories.
Then internal categories, together with their functors and the natural transformations between them, form a 2-category \(\Cat[\E]\) (denoted in the same way as its underlying 1-category with abuse of notation).

\begin{definition}[internal natural transformation]
	Let \(F, G \colon \A \to \B\) be functors of internal categories in \(\E\).	
	A \Def{natural transformation} of internal functors \(\alpha \colon F \to G \colon \A \to \B\) is given by an arrow \(\alpha \colon A_0 \to B_1\) such that
	\[
		a \colon A_0 \TiC \alpha(a) \colon F_0(a) \to_{\B} G_0(a)
	\]
	and satisfying the usual naturality axiom.
	Vertical and horizontal compositions of natural transformations and the identity natural transformation are defined in the usual way.
\end{definition}

To clarify a subtlety of the notation, notice that \(\Id[\A]\) denotes the identity of the category \(\A\), while \(\Id[\Cat[\E]](\A)\) denotes the identity functor on \(\A\) and \(\Id[\Cat[\E](\A, \B)](F)\) denotes the identity natural transformation on \(F \colon \A \to \B\).
When it is clear from the context, we might omit the subscript of \(\Id\) and write, for example, \(\Id(\A)\) in place of \(\Id[\Cat[\E]](\A)\) and \(\Id(F)\) in place of \(\Id[\Cat[\E](\A, \B)](F)\).

The following result is the internal version of the standard set-theoretic one, and it can be proved by a completely routine application of the internal language of \(\E\).

\begin{proposition}
	The category \(\Cat[\E]\) has finite limits induced point-wise by the corresponding limits in \(\E\).
	In particular, there is a terminal internal category \(\Terminal[\Cat[\E]]\) and a binary product \(\Prod[\Cat[\E]]\) of internal categories making \(\Cat[\E]\) a cartesian monoidal category.
\end{proposition}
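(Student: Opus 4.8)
The plan is to prove the stronger statement that every finite diagram $D \colon \CatStyle{J} \to \Cat[\E]$ admits a limit whose underlying objects are the limits in $\E$ of the underlying objects of the $D(j)$, and then to recover the terminal internal category and binary products as the instances $\CatStyle{J} = \emptyset$ and $\CatStyle{J}$ the two-object discrete category. So, working inside $\E$, I would first form $L_0 \DefEq \lim_j D(j)_0$ and $L_1 \DefEq \lim_j D(j)_1$, with limit projections $p_{j,0} \colon L_0 \to D(j)_0$ and $p_{j,1} \colon L_1 \to D(j)_1$. Since each morphism of $D$ is an internal functor, it commutes with identities, sources and targets, so the families $(\Id[{D(j)}])_j$, $(\Source[{D(j)}])_j$ and $(\Target[{D(j)}])_j$ are cones over the appropriate diagrams and hence induce unique arrows $\Id[\A] \colon L_0 \to L_1$ and $\Source[\A], \Target[\A] \colon L_1 \to L_0$.

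Next I would produce the composition map. Because limits commute with limits, the canonical comparison $L_1 \Pullback[\Source[\A]][\Target[\A]] L_1 \to \lim_j \bigl( D(j)_1 \Pullback[\Source[{D(j)}]][\Target[{D(j)}]] D(j)_1 \bigr)$ is an isomorphism, and transporting the cone $(\Comp[{D(j)}])_j$ through it gives $\Comp[\A] \colon L_1 \Pullback[\Source[\A]][\Target[\A]] L_1 \to L_1$. Keeping these two objects of composable pairs identified — that is, keeping straight the two ways of iterating the limit — is the one point that genuinely deserves care; everything else is the advertised routine bookkeeping. This produces a candidate internal category $\A \DefEq (L_0, L_1, \Id[\A], \Source[\A], \Target[\A], \Comp[\A])$ together with a cone $(P_j \colon \A \to D(j))_j$ in $\Cat[\E]$ whose components are the $p_{j,0}$ and $p_{j,1}$; that each $P_j$ really is an internal functor is immediate from the way these structure maps were induced.

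It then remains to check the category axioms for $\A$ and the universal property of the cone $(P_j)_j$. The axioms amount to equalities between parallel arrows of $\E$ with codomain $L_0$ or $L_1$, and since the projections $p_{j,0}$ (respectively $p_{j,1}$) are jointly monic, such an equality may be tested after postcomposition with every projection, where — by the defining properties of the induced maps — it reduces to the corresponding axiom for some $D(j)$, which holds by hypothesis. This is exactly the ``completely routine application of the internal language'' alluded to above: the projections exhibit $\A$ as a pointwise limit, so every equational property transports along them. For the universal property, an arbitrary cone $(Q_j \colon \X \to D(j))_j$ has underlying cones in $\E$ which factor uniquely through $L_0$ and $L_1$, and joint monicity of the projections forces these factorisations to respect all the structure, yielding the unique mediating internal functor $\X \to \A$. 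Specialising to the empty and the two-object discrete diagrams then gives $\Terminal[\Cat[\E]]$ and $\Prod[\Cat[\E]]$, and since any category with finite products is canonically cartesian monoidal, this is the asserted monoidal structure on $\Cat[\E]$.
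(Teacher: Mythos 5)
Your construction is correct and is exactly the standard pointwise-limit argument the paper has in mind: the paper omits the proof, calling it a completely routine application of the internal language, and your external argument via jointly monic limit projections (with the limit-commutes-with-limit comparison for the object of composable pairs) is just a spelled-out version of that same routine verification. No substantive difference in approach.
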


It can also be useful to transport the structure of internal categories along a functor changing the base category.

\begin{proposition}\label{prop:int-base-change}
	Let \(F \colon \E \to \E'\) be a functor between categories with finite limits which preserves such finite limits.
	Then, there is an induced change-of-base functor \(F_{\bullet} \colon \Cat[\E] \to \Cat[\E']\), which also preserves finite limits.
\end{proposition}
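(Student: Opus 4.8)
The plan is to define $F_{\bullet}$ directly on the underlying data of internal categories, functors and natural transformations, exploiting the fact that all the relevant axioms are commuting diagrams in $\E$, which are preserved by any functor, and that $F$ moreover preserves the pullbacks used to express composition.

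First, given an internal category $\A$ in $\E$, I would set $(F_{\bullet}\A)_0 \DefEq F(A_0)$ and $(F_{\bullet}\A)_1 \DefEq F(A_1)$, with source, target and identity structure maps $F(\Source[\A])$, $F(\Target[\A])$ and $F(\Id[\A])$. The only point requiring attention is the composition map: since $F$ preserves finite limits, the canonical comparison arrow $F\bigl(A_1 \Pullback[\Source[\A]][\Target[\A]] A_1\bigr) \to F(A_1) \Pullback[F(\Source[\A])][F(\Target[\A])] F(A_1)$ is an isomorphism, and I would take $\Comp[F_{\bullet}\A]$ to be $F(\Comp[\A])$ precomposed with its inverse. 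The category axioms for $F_{\bullet}\A$ (unit laws and associativity) then follow by applying $F$ to the corresponding diagrams for $\A$ and inserting this comparison isomorphism wherever a pullback appears.

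Next, for an internal functor $G \colon \A \to \B$ I would put $(F_{\bullet}G)_i \DefEq F(G_i)$ for $i = 0, 1$; the compatibility with source, target and identities, and the functoriality axioms, are images under $F$ of the defining diagrams of $G$, hence hold. One checks in the same routine way that $F_{\bullet}$ preserves composition and identities of internal functors, so that $F_{\bullet}$ is a functor $\Cat[\E] \to \Cat[\E']$; for a natural transformation $\alpha \colon A_0 \to B_1$ one takes $F(\alpha)$, and the naturality axiom together with compatibility with vertical and horizontal composition transfers along $F$, upgrading $F_{\bullet}$ to a $2$-functor (as implicit in the statement).

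Finally, to prove that $F_{\bullet}$ preserves finite limits, I would appeal to the earlier proposition stating that finite limits in $\Cat[\E]$ and in $\Cat[\E']$ are computed objectwise from those of $\E$ and $\E'$: the underlying graph and composition data of a finite limit of internal categories are the finite limit of the underlying graphs, so applying $F$ — which preserves these finite limits in $\E$ — sends a limit cone in $\Cat[\E]$ to a limit cone in $\Cat[\E']$. I expect the only genuine, though mild, obstacle to be the bookkeeping of the non-identity comparison isomorphism $F\bigl(A_1 \Pullback[\Source[\A]][\Target[\A]] A_1\bigr) \cong F(A_1) \Pullback[F(\Source[\A])][F(\Target[\A])] F(A_1)$ throughout the definition of $\Comp[F_{\bullet}\A]$ and in the verification of associativity; the remainder is a mechanical transfer of commuting diagrams along $F$.
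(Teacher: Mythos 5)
Your construction is correct and is exactly the routine argument the paper has in mind (the paper states this proposition without proof, treating it as standard): apply \(F\) to the underlying data, use the comparison isomorphism \(F\bigl(A_1 \Pullback[\Source[\A]][\Target[\A]] A_1\bigr) \Iso F(A_1)\Pullback F(A_1)\) to define composition, and deduce preservation of finite limits from the fact that limits in \(\Cat[\E]\) and \(\Cat[\E']\) are computed pointwise. No gaps; the bookkeeping of the comparison isomorphism that you flag is indeed the only point needing care.
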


There is also an obvious \Def{objects functor} \(U \colon \Cat[\E] \to \E\) sending an internal category to its underlying-object-of-objects.
Such functor preserves the cartesian monoidal structure.

We now mention a few remarkable examples of internal categories:
\begin{itemize}
	\item \(\Dis A\), the \Def{discrete category} over an object \(A\).
	\item \(\Ind A\), the \Def{indiscrete category} over an object \(A\).
	\item \(\A^{\Op}\), The opposite category of an internal category \(\A\).
\end{itemize}
Then, we have the following adjunctions:
\begin{itemize}
	\item \(\Dis \Adjoint U \Adjoint \Ind\).
	\item \({(\MathDash)}^{\Op} \Adjoint {(\MathDash)}^{\Op}\).
\end{itemize}
An alternative, more abstract way to look at the discrete category with respect to \Cref{rmk:cat-reindexing} is to notice that \(\Dis A\) is (equivalent to) \(A_! A^* \Terminal[\Cat[\E]]\).

Sometimes, and especially when using the internal language, the notation denoting the object or morphism component of a functor can be cumbersome.
Thus, in the following sections we shall adopt a common convention in category theory and omit to make such component explicit when it is clear from the context which one we are referring to.
For example, given an internal functor \(F \colon \A \to \B\), in context \(a \colon A_0, f \colon A_1\) we shall write \(F(a)\) for \(F_0(a)\) and \(F(f)\) for \(F_1(f)\).

\section{Internal monoidal categories}\label{sec:mon-cats}

We could not conceivably present a notion of enrichment without a suitable notion of monoidal category to enrich over.
We introduce here the definitions, in the internal language of \(\E\), of the notions of monoidal category, functor and natural transformation.

\begin{definition}[internal monoidal category]
	An \Def{internal monoidal category} is an internal category \(\IntV\) in \(\E\) equipped with functors
	\begin{description}
		\item[Monoidal product] \(\MonProd[\IntV] \colon \IntV \Prod[\Cat[\E]] \IntV \to \IntV\), and
		\item[Monoidal unit] \(\MonUnit[\IntV] \colon \Terminal[\Cat[\E]] \to \IntV\),
	\end{description}
	and natural isomorphisms
	\begin{description}
		\item[Associator] \(\MonAssoc[\IntV] \colon (\pi_1 \MonProd[\IntV] \pi_2) \MonProd[\IntV] \pi_3 \to \pi_1 \MonProd[\IntV] (\pi_2 \MonProd[\IntV] \pi_3) \colon \IntV \Prod[\Cat[\E]] \IntV \Prod[\Cat[\E]] \IntV \to \IntV\),
		\item[Left unitor] \(\MonUnitorL[\IntV] \colon \MonUnit[\IntV] \MonProd[\IntV] \Id(\IntV) \to \Id(\IntV) \colon \IntV \to \IntV\), and
		\item[Right unitor] \(\MonUnitorR[\IntV] \colon \Id(\IntV) \MonProd[\IntV] \MonUnit[\IntV] \to \Id(\IntV) \colon \IntV \to \IntV\),
	\end{description}
	such that, in context \(a, b, c, d \colon V_0\), the axioms
	\begin{gather}
		\tag{Triangle}
		\begin{tikzcd}[ampersand replacement = \&]
			( a \MonProd[\IntV][0] \MonUnit[\IntV] ) \MonProd[\IntV][0] b
					\ar[rr, "{\MonAssoc[\IntV](a, \MonUnit[\IntV], b)}"]
					\ar[dr, "{\MonUnitorR[\IntV](a) \MonProd[\IntV][1] \Id[\IntV](b)}"']
				\& \& a \MonProd[\IntV][0] ( \MonUnit[\IntV] \MonProd[\IntV][0] b )
					\ar[dl, "{\Id[\IntV](a) \MonProd[\IntV][1] \MonUnitorL[\IntV](b)}"] \\
			\& a \MonProd[\IntV][0] b
		\end{tikzcd} \\
		\tag{Pentagon}
		\begin{tikzcd}[ampersand replacement = \&, column sep = 0]
			\& ( ( a \MonProd[\IntV][0] b ) \MonProd[\IntV][0] c ) \MonProd[\IntV][0] d
				\ar[dl, "{\MonAssoc[\IntV](a, b, c) \MonProd[\IntV][1] \Id[\IntV](d)}"']
				\ar[dr, "{\MonAssoc[\IntV](a \MonProd[\IntV][0] b, c, d)}"] \\
			( a \MonProd[\IntV][0] ( b \MonProd[\IntV][0] c ) ) \MonProd[\IntV][0] d
				\ar[d, "{\MonAssoc[\IntV](a, b \MonProd[\IntV][0] c, d)}"']
			\&\& ( a \MonProd[\IntV][0]  b ) \MonProd[\IntV][0] ( c \MonProd[\IntV][0] d )
				\ar[d, "{\MonAssoc[\IntV](a, b, c \MonProd[\IntV][0] d)}"] \\
			a \MonProd[\IntV][0] ( ( b \MonProd[\IntV][0] c ) \MonProd[\IntV][0] d )
				\ar[rr, "{\Id[\IntV](a) \MonProd[\IntV][1] \MonAssoc[\IntV](b, c, d)}"']
			\&\& a \MonProd[\IntV][0] ( b \MonProd[\IntV][0] ( c \MonProd[\IntV][0] d ) )
		\end{tikzcd}
	\end{gather}
	hold.
\end{definition}

The previous definition is a direct internalization of the standard definition of monoidal category, and that alone should suffice to persuade us of its correctness.
If we were still skeptical, though, it could also be argued that, since small monoidal categories are pseudomonoids in the 2-category of categories, then internal monoidal categories in \(\E\) must be pseudomonoids in the 2-category \(\Cat[\E]\), which is what our definition amounts to.

We then proceed with the definition of monoidal functor.
	
\begin{definition}[internal monoidal functor]
	An \Def{internal monoidal functor} \((F, \epsilon, \mu) \colon \IntV \to \IntW\) is given by an internal functor \(F \colon \IntV \to \IntW\) and coherence natural isomorphisms
	\begin{align*}
		\epsilon &\colon \MonUnit[\IntW] \to F \MonUnit[\IntV] \colon \MonUnit[\Cat[\E]] \to \IntW \\
			\shortintertext{and}
		\mu &\colon F \MonProd[\IntW] F \to F ( \MathDash \MonProd[\IntV] \MathDash) \colon \IntV \Prod[\Cat[\E]] \IntV \to \IntW
	\end{align*}
	such that, in context \(a, b, c \colon V_0\), the axioms
	\begin{gather*}
		\begin{tikzcd}[ampersand replacement = \&, column sep = large]
			\big( F( a ) \MonProd[\IntW] F( b ) \big) \MonProd[\IntW] F( c )
					\ar[r, bend left = 10, "{\MonAssoc[\IntW]\big( F( a ), F( b ), F( c ) \big)}"]
					\ar[d, "{\mu(a, b) \MonProd[\IntW][1] \Id[\IntW] F(c)}"']
				\& F( a ) \MonProd[\IntW] \big( F( b ) \MonProd[\IntW] F( c ) \big)
					\ar[d, "{\Id[\IntW] F(a) \MonProd[\IntV][1] \mu(b, c)}"] \\
			F( a \MonProd[\IntV] b ) \MonProd[\IntW] F( c )
					\ar[d, "{\mu( a \MonProd[\IntV][0] b, c)}"']
				\& F( a ) \MonProd[\IntW] F( b \MonProd[\IntV] c )
					\ar[d, "{\mu( a , b \MonProd[\IntV][0] c)}"] \\
			F\big( ( a \MonProd[\IntV] b ) \MonProd[\IntV] c \big)
					\ar[r, "{F \big( \MonAssoc[\IntV](a, b, c) \big)}"']
				\& F \big( a \MonProd[\IntV] ( b \MonProd[\IntV] c ) \big)
		\end{tikzcd} \\
		\begin{tikzcd}[ampersand replacement = \&]
			\MonUnit[\IntW] \MonProd[\IntW] F( a )
					\ar[rrr, "{\MonUnitorL[\IntW] \big( F( a ) \big)}"]
					\ar[dr, "{\epsilon \MonProd[\IntW] F( a )}"']
				\& \& \& F ( a ) \\
			\& F( \MonUnit[\IntV] ) \MonProd[\IntW] F( a )
					\ar[r, "{\mu(\MonUnit[\IntV], a)}"']
				\& F( \MonUnit[\IntV] \MonProd[\IntV] a )
					\ar[ur, "{F \MonUnitorL[\IntV](a)}"']
		\end{tikzcd} \\
		\begin{tikzcd}[ampersand replacement = \&]
			F( a ) \MonProd[\IntW] \MonUnit[\IntW]
					\ar[rrr, "{\MonUnitorR[\IntW] \big( F( a ) \big)}"]
					\ar[dr, "{F( a ) \MonProd[\IntW] \epsilon}"']
				\& \& \& F ( a ) \\
			\& F( a ) \MonProd[\IntW] F( \MonUnit[\IntV] )
					\ar[r, "{\mu(a, \MonUnit[\IntV])}"']
				\& F_0( a \MonProd[\IntV] \MonUnit[\IntV] )
					\ar[ur, "{F \MonUnitorR[\IntV](a)}"']
		\end{tikzcd}
	\end{gather*}
	hold.
\end{definition}

Then, we define natural trasformations of monoidal functors.
	
\begin{definition}[internal monoidal natural transformation]
	An \Def{internal monoidal natural transformation} \(\alpha \colon (F, \epsilon_F, \mu_F) \to (G, \epsilon_G, \mu_G) \colon \IntV \to \IntW\) is a natural transformation \(\alpha \colon F \to G \colon \IntV \to \IntW\) such that, in context \(a, b, \colon V_0\), the axioms
	\begin{gather*}
		\begin{tikzcd}[ampersand replacement = \&, column sep = large]
			F(a) \MonProd[\IntW] F(b)
					\ar[r, "{\alpha(a) \MonProd[\IntW] \alpha(b)}"]
					\ar[d, "{\mu_F(a, b)}"']
				\& G(a) \MonProd[\IntW] G(b)
					\ar[d, "{\mu_G(a, b)}"] \\
			F(a \MonProd[\IntV] b)
					\ar[r, "{\alpha(a \MonProd[\IntV] b)}"']
				\& G(a \MonProd[\IntV] b)
		\end{tikzcd} \\
		\begin{tikzcd}[ampersand replacement = \&, column sep = small]
			F(\MonUnit[\IntV])
				\ar[rr, "{\alpha(\MonUnit[\IntV])}"]
			\&\& G(\MonUnit[\IntV]) \\
			\& \MonUnit[\IntW]
				\ar[ul, "{\epsilon_F}"]
				\ar[ur, "{\epsilon_G}"']
		\end{tikzcd}
	\end{gather*}
	hold.
\end{definition}

It is routine to check in the internal language that the data above gives 2-categories, so we can give the following definitions.

\begin{definition}[category of internal monoidal categories]
	Internal monoidal categories and monoidal functors (and monoidal transformations) in \(\E\) form a (2-)category \(\MonCat[\E]\).
\end{definition}

As for plain internal categories (see \Cref{prop:int-base-change}), we can transport the monoidal structure along a functor changing the base category.

\begin{proposition}\label{prop:int-mon-base-change}
	Let \(F \colon \E \to \E'\) be a functor between categories with finite limits which preserves such finite limits.
	Then, there is an induced change-of-base functor \(F_{\bullet} \colon \MonCat[\E] \to \MonCat[\E']\), which also preserves finite limits.
\end{proposition}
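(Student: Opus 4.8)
The plan is to obtain $F_\bullet$ on $\MonCat[\E]$ by transporting all of the structure of an internal monoidal category along the change-of-base functor $F_\bullet\colon\Cat[\E]\to\Cat[\E']$ of \Cref{prop:int-base-change}, taking advantage of the fact that the latter preserves finite limits — in particular the terminal internal category and binary products of internal categories. First I would record that $F_\bullet\colon\Cat[\E]\to\Cat[\E']$ is in fact a $2$-functor: a natural transformation of internal functors is just an arrow $A_0\to B_1$ satisfying an equational naturality condition formulated in the finite-limit fragment of the internal language, so, since $F$ preserves finite limits, it carries such data and axioms to the corresponding data and axioms in $\E'$, compatibly with vertical and horizontal composition and with identities. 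Moreover, since $F_\bullet$ preserves finite products of internal categories, it is canonically equipped with coherent comparison isomorphisms $F_\bullet(\IntV\Prod[\Cat[\E]]\IntW)\Iso F_\bullet\IntV\Prod[\Cat[\E']]F_\bullet\IntW$ and $F_\bullet\Terminal[\Cat[\E]]\Iso\Terminal[\Cat[\E']]$; that is, it is a cartesian (strong monoidal for the cartesian structures) $2$-functor.

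Granting this, the construction is essentially formal. As noted after the definition of internal monoidal category, an internal monoidal category in $\E$ is precisely a pseudomonoid in the cartesian $2$-category $\Cat[\E]$, and internal monoidal functors and monoidal natural transformations are the morphisms and $2$-cells of such pseudomonoids; a cartesian $2$-functor transports pseudomonoids and their morphisms and $2$-cells accordingly. Concretely, for an internal monoidal category $\IntV$ one takes the monoidal product of $F_\bullet\IntV$ to be the composite
\[
F_\bullet\IntV\Prod[\Cat[\E']]F_\bullet\IntV\Iso F_\bullet(\IntV\Prod[\Cat[\E]]\IntV)\xrightarrow{F_\bullet(\MonProd[\IntV])}F_\bullet\IntV,
\]
the monoidal unit to be $\Terminal[\Cat[\E']]\Iso F_\bullet\Terminal[\Cat[\E]]\xrightarrow{F_\bullet(\MonUnit[\IntV])}F_\bullet\IntV$, and the associator and the two unitors to be the images under $F_\bullet$ of $\MonAssoc[\IntV]$, $\MonUnitorL[\IntV]$, $\MonUnitorR[\IntV]$ — these are again isomorphisms because a $2$-functor preserves invertible $2$-cells — suitably conjugated by the comparison isomorphisms. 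The triangle and pentagon axioms for $F_\bullet\IntV$ then follow by applying $F_\bullet$ to the triangle and pentagon diagrams of $\IntV$ and pasting in the coherence of the comparison isomorphisms. The very same recipe applied to the coherence cells $\epsilon,\mu$ of an internal monoidal functor, and to an internal monoidal natural transformation, defines $F_\bullet$ on morphisms and $2$-cells of $\MonCat[\E]$, and functoriality is immediate from the functoriality of $F_\bullet$ on $\Cat[\E]$.

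Finally, for the preservation of finite limits, one uses that the forgetful functor $\MonCat[\E]\to\Cat[\E]$ creates finite limits — a cone of internal monoidal categories is a limit cone exactly when its underlying cone of internal categories is, the monoidal structure on the limit being the evident pointwise one — and likewise over $\E'$. Since $F_\bullet\colon\Cat[\E]\to\Cat[\E']$ preserves finite limits and the construction above is visibly compatible with these forgetful functors, $F_\bullet\colon\MonCat[\E]\to\MonCat[\E']$ preserves finite limits too. The only point that genuinely needs care is the bookkeeping of the comparison isomorphisms in checking the pentagon and triangle identities and the internal-monoidal-functor axioms; everything else is routine once the cartesian $2$-functor structure of $F_\bullet$ on $\Cat[\E]$ is in hand, and that step — upgrading \Cref{prop:int-base-change} to a statement about $2$-categories and cartesian structure — is what I would treat most carefully.
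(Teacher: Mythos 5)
Your construction---transporting the pseudomonoid structure along the finite-limit-preserving, hence cartesian, $2$-functor \(F_{\bullet} \colon \Cat[\E] \to \Cat[\E']\) of \Cref{prop:int-base-change}---is essentially the argument the paper intends: the paper states this proposition without an explicit proof, having already remarked that internal monoidal categories are precisely pseudomonoids in \(\Cat[\E]\), so your route coincides with the intended one. The only point to treat with more care than you suggest is the claim that the forgetful functor \(\MonCat[\E] \to \Cat[\E]\) \emph{creates} all finite limits: an equalizer of strong (non-strict) monoidal functors need not inherit a monoidal structure, since the coherence isomorphisms of the two functors may differ on objects where the functors agree; for the terminal object and binary products the pointwise argument is unproblematic, which matches the level of precision of the paper's own unproved statement.
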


Finally, notice that there is an underlying-internal-category (2-)functor
\[
	U_{\Cat[\E]} \colon \MonCat[\E] \to \Cat[\E]
\]
sending monoidal categories and functors (and natural transformations) to, respectively, their underlying internal categories and functors (and natural transformations).

\section{Indexed categories}\label{sec:indexed-cats}

Indexed categories, while not playing a direct role in the definition of internal enrichment, will be essential to their understanding in relation to other notions of enrichment.

Indexed categories have been treated extensively in the literature, and the main ideas are long established.
However, we shall refer to the recent exposition given in \textcite{shulman2008framed,shulman2013enriched}, since these sources are also needed in regard to the notion of enriched indexed category.

To begin with, we shall state the definition of indexed category.

\begin{definition}[indexed category]
	An \Def{\(\E\)-indexed category} is a pseudofunctor \(\E^{\Op} \to \Cat\), where \(\Cat\) is the 2-category of categories, functors, and natural transformations.
\end{definition}

Consider the following notable example, which will turn out to be useful later on.

\begin{example}
	\label{exm:self-indexing}
	The self-indexing of \(\E\) is the \(\E\)-indexed category whose fiber over an object \(X\) is the slice category \(\sfrac{\E}{X}\) and where the reindexing along \(f \colon X \to Y\) is given by pullback along \(f\).
\end{example}

There is a strict relation between the theory of indexed categories and that of fibration, as established by the following, classic result.

\begin{theorem}
	\label{thm:groth-constr}
	An \(\E\)-indexed category \(\IdxC \colon \E^{\Op} \to \Cat\) is, via the \emph{Grothendieck construction}, equivalent to a cloven fibration \(\int \IdxC \to \E\).
\end{theorem}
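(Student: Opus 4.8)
The plan is to make the Grothendieck construction explicit in both directions and then check that the two passages are mutually inverse up to the appropriate equivalence. Given an \(\E\)-indexed category \(\IdxC \colon \E^{\Op} \to \Cat\), I would define the total category \(\int \IdxC\) to have as objects the pairs \((X, A)\) with \(X\) an object of \(\E\) and \(A\) an object of the fiber \(\IdxC(X)\), and as morphisms \((X, A) \to (Y, B)\) the pairs \((f, \phi)\) with \(f \colon X \to Y\) in \(\E\) and \(\phi \colon A \to \IdxC(f)(B)\) in \(\IdxC(X)\). The composite of \((f, \phi) \colon (X,A) \to (Y,B)\) with \((g, \psi) \colon (Y,B) \to (Z,C)\) is \((g \circ f,\, \gamma \circ \IdxC(f)(\psi) \circ \phi)\), where \(\gamma \colon \IdxC(f)\IdxC(g)(C) \xrightarrow{\sim} \IdxC(g \circ f)(C)\) is the structural isomorphism of the pseudofunctor, and identities are built from the structural isomorphism \(\IdxC(\IdOp) \xrightarrow{\sim} \IdOp\); associativity and unitality then follow from the coherence axioms for \(\IdxC\). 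The projection \(\pi_{\IdxC} \colon \int \IdxC \to \E\), \((X, A) \mapsto X\) and \((f, \phi) \mapsto f\), is a fibration: a cartesian lift of \(f \colon X \to Y\) at \((Y, B)\) is \((f, \IdOp) \colon (X, \IdxC(f)(B)) \to (Y, B)\), cartesianness being immediate from the definition of the morphisms of \(\int \IdxC\), and this prescription constitutes a cleavage.

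For the converse, from a cloven fibration \(p \colon \mathcal{F} \to \E\) I would build a pseudofunctor \(\Phi \colon \E^{\Op} \to \Cat\) sending \(X\) to the fiber \(p^{-1}(X)\) and \(f \colon X \to Y\) to the reindexing functor \(f^{\ast} \colon p^{-1}(Y) \to p^{-1}(X)\) determined by the chosen cartesian lifts; the structural isomorphisms \(g^{\ast} f^{\ast} \xrightarrow{\sim} (f \circ g)^{\ast}\) and \(\IdOp \xrightarrow{\sim} (\IdOp)^{\ast}\) are the unique vertical isomorphisms comparing the relevant cartesian lifts, and the pseudofunctor coherence axioms hold by the uniqueness clause in the universal property of cartesian morphisms. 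One then checks the two constructions are mutually inverse: starting from \(\IdxC\), the fiber of \(\int \IdxC\) over \(X\) is \(\IdxC(X)\) on the nose and its reindexing functors agree with those of \(\IdxC\) up to the structural isomorphisms, so the round trip recovers \(\IdxC\) up to pseudonatural equivalence; starting from \(p\), the functor \(\int \Phi \to \mathcal{F}\) sending \((X, A) \mapsto A\) is an equivalence over \(\E\), since every morphism of \(\mathcal{F}\) factors, uniquely up to vertical isomorphism, through a chosen cartesian morphism. Promoting this to a \(2\)-equivalence between the \(2\)-category of \(\E\)-indexed categories and that of cloven fibrations over \(\E\) is then a matter of matching pseudonatural transformations with fibred functors and modifications with fibred natural transformations.

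The main obstacle is purely one of bookkeeping: all the content lies in tracking the structural isomorphisms of the pseudofunctor through the definition of composition in \(\int \IdxC\) and of reindexing in a fibration, and in verifying that the coherence axioms on one side correspond exactly to the category and functor axioms on the other. As the statement is entirely classical, I would keep the exposition at the level of the constructions and refer to \textcite{shulman2008framed,shulman2013enriched}, or to any standard account, for the coherence verifications.
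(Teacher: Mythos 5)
Your proposal is the standard Grothendieck construction, correctly carried out in both directions (total category with cartesian lifts \((f,\IdOp)\), and fibers with chosen reindexing functors), with the coherence bookkeeping rightly identified as the only real work. The paper itself gives no proof of this theorem --- it is stated as a classical result and simply invoked --- so your sketch is exactly the intended argument and there is nothing to compare beyond that.
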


Now we want to extend the previous ideas to the context of monoidal categories.
We begin by giving the notion of indexed monoidal categories.

\begin{definition}[indexed monoidal category]	
	An \Def{\(\E\)-indexed monoidal category} is a pseudofunctor \(\IdxW \colon \E^{\Op} \to \MonCat\), where \(\MonCat\) is the 2-category of monoidal categories, strong monoidal functors, and monoidal transformations.
\end{definition}

A suitable notion of monoidal fibration is required to establish a relation with indexed monoidal categories, so we recall that in the following definition. 

\begin{definition}[monoidal fibration]
	Let \(\ExtV\) be a monoidal category.
	A monoidal fibration is a cloven fibration \(\ExtV \to \E\) such that the underlying functor is strict monoidal (with \(\E\) regarded as cartesian monoidal) and the tensor product in \(\ExtV\) preserves the choice of cartesian arrows.
\end{definition}

For a general monoidal base category the notions of indexed monoidal category and of monoidal fibration do not correspond under the Grothendieck construction.
Indeed, if \(\IdxW \colon \E'^{\Op} \to\MonCat\) is an \(\E'\)-indexed monoidal category, then, in the cloven fibration \(\int \IdxW \to \E'\), it is evident that \(\int \IdxW\) has tensor products only for objects in the same fiber, and the result is still an object in that fiber.
On the other hand, if \(\ExtV \to \E'\) is a monoidal fibration, and \(A\) and \(B\) are objects of \(\ExtV\) lying over the objects \(X\) and \(Y\) of \(\E'\) respectively, then the tensor product \(A \MonProd[\ExtV] B\) lies over \(X \MonProd[\E'] Y\).
However, it is folklore that, in case the monodial structure on \(\E'\) is given by the product, i.e., \(\E'\) is cartesian monoidal, such as our ambient category \(\E\) is, then there is a correspondence.

We now introduce some convenient notation for use in the next result.

\begin{notation}
	Let \(F \colon \C \to \E\) be a cloven fibration, and \(f \colon X \to Y\) an arrow in \(\E\).
	Then, call \(f^* \colon F^{-1}(Y) \to F^{-1}(X)\) the lifting functor from the fiber along \(F\) over \(Y\) to the fiber over \(X\).
\end{notation}

We then state and give a sketch proof of the theorem analogous to \Cref{thm:groth-constr}, in the context of monoidal categories.

\begin{theorem}\label{thm:Grothendieck-mon}
	An \(\E\)-indexed monoidal category \(\IdxW\) is, via the \emph{Grothendieck construction}, equivalent to a monoidal fibration \(\int \IdxW \to \E\).
\end{theorem}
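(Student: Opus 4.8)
The plan is to bootstrap from the ordinary Grothendieck correspondence of \Cref{thm:groth-constr}. Composing an $\E$-indexed monoidal category $\IdxW \colon \E^{\Op} \to \MonCat$ with the forgetful 2-functor $\MonCat \to \Cat$ produces an $\E$-indexed category, and \Cref{thm:groth-constr} already hands us the cloven fibration $\int \IdxW \to \E$, with fibre $\IdxW(X)$ over each object $X$. So the underlying category and the fibration are in place; what remains is to endow $\int \IdxW$ with a monoidal structure making the projection a monoidal fibration, to carry out a construction in the reverse direction, and to check that the two passages are mutually inverse up to the relevant equivalences --- the last point reducing, on underlying data, to \Cref{thm:groth-constr} itself.

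\emph{Forward direction.} Using that $\E$ is cartesian monoidal, I would define, for objects $A$ over $X$ and $B$ over $Y$,
\[
	A \MonProd[\int \IdxW] B \DefEq \pi_1^*(A) \MonProd[\IdxW(X \times Y)] \pi_2^*(B),
\]
an object of the fibre over $X \times Y$ (where $\pi_1, \pi_2$ are the product projections), with unit the monoidal unit of $\IdxW(\Terminal[\E])$, lying over $\Terminal[\E]$; morphisms are tensored by reindexing their components and applying the fibre tensor. The associator and unitors of $\int \IdxW$ are then assembled from three ingredients: the associator and unitors of each fibre $\IdxW(X)$; the pseudofunctor constraints of $\IdxW$, comparing reindexing along a composite with the composite of reindexings (and which are monoidal natural isomorphisms, since $\IdxW$ lands in $\MonCat$); and the coherence isomorphisms of the cartesian structure on $\E$. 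Strictness of the projection over $\E$ is immediate by construction, and one checks that the tensor of two cartesian arrows --- over $f \colon X \to X'$ and $g \colon Y \to Y'$ --- is cartesian over $f \times g$, since it is built entirely out of isomorphisms (reindexings of isomorphisms, strong-monoidal structure cells, pseudofunctor constraints).

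\emph{Reverse direction and inverseness.} Given a monoidal fibration $\ExtV \to \E$, I would make each fibre $\ExtV_X = F^{-1}(X)$ monoidal with tensor $A, B \mapsto \Diag[X]^*(A \MonProd[\ExtV] B)$, where $\Diag[X] \colon X \to X \times X$ is the diagonal, and unit $!_X^*(\MonUnit[\ExtV])$; the hypotheses that $F$ is strict monoidal over the cartesian $\E$ and that the tensor preserves the choice of cartesian arrows are precisely what make each reindexing functor $f^*$ canonically strong monoidal, so that $X \mapsto \ExtV_X$ upgrades to a pseudofunctor $\E^{\Op} \to \MonCat$. Finally, feeding the underlying data to \Cref{thm:groth-constr} reduces the equivalence of the two passages to matching the monoidal structures: in one direction this is the natural isomorphism $\Diag[X]^*\big(\pi_1^*(A) \MonProd \pi_2^*(B)\big) \cong A \MonProd B$ in $\IdxW(X)$, coming from $\pi_i \circ \Diag[X] = \Id[X]$ together with the pseudofunctor constraints, with a symmetric check in the other direction.

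\emph{Main obstacle.} The real work lies in the coherence bookkeeping of the forward direction: verifying the pentagon and triangle for $\int \IdxW$ amounts to pasting together, in one large diagram, the pentagon/triangle of the fibres, the coherence cells of the pseudofunctor $\IdxW$, and the coherence of the cartesian product on $\E$, while keeping track of how the strong-monoidal constraint cells of the reindexing functors mediate between them. As the statement asks only for a sketch and the result is folklore for a cartesian base, I would present the constructions explicitly and flag these coherence verifications as routine --- if lengthy --- diagram chases.
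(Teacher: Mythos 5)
Your construction matches the paper's own sketch in both directions: the tensor on $\int \IdxW$ of objects over $X$ and $Y$ as the fibrewise tensor of their reindexings along the product projections with unit $\MonUnit[\IdxW(\Terminal)]$, and conversely the fibrewise tensor obtained by pulling back along the diagonal with unit $(!_X)^*\MonUnit[\ExtV]$. The paper likewise leaves the coherence and mutual-inverseness checks implicit, so your proposal is correct and follows essentially the same route, with somewhat more explicit bookkeeping of what remains to verify.
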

\begin{proof}[Proof (sketch)]
	Let the pseudo-functor \(\IdxW \colon \E^{\Op} \to \MonCat\) be an index monoidal category and \(\int~\IdxW~\to~\E\) the fibration yielded by the Grothendieck construction.
	Then, \(\int \IdxW\) has a monoidal structure.
	Indeed, \(\MonUnit[\int \IdxW] = \MonUnit[\IdxW(\Terminal)]\).
	Let \(X\) and \(Y\) be objects of \(\E\).
	Let \(A\) be an object of \(\IdxW(X)\) and \(B\) one of \(\IdxW(Y)\).
	Then,
	\[
		A \MonProd[\int \IdxW] B = {(\pi_{Y})}^{*} A \MonProd[\IdxW(X \Prod Y)] {(\pi_{X})}^{*} B
	\]
	where \(\pi_{Y} \colon X \Prod Y \to X\) and \(\pi_{X} \colon X \Prod Y \to Y\).
	With this monoidal structure on \(\int \IdxW\), the fibration \(\int \IdxW \to \E\) is strict monoidal.

	Conversely, let \(P \colon \ExtV \to \E\) be a strict monoidal fibration and \(\IdxW \colon \E^{\Op} \to \Cat\) the pseudofunctor defined by the fibers of \(P\).
	Then, \(\IdxW\) is an \(\E\)-indexed monoidal category, that is, it restricts to \(\IdxW \colon \E^{\Op} \to \MonCat\).
	Indeed, let \(X\) be an object of \(\E\), and \(!_X \colon X \to \Terminal\) the unique such arrow.
	Then \(\MonUnit[\IdxW(X)] = {(!_{X})}^{*}\MonUnit[\ExtV]\).
	Let \(A\) and \(B\) be a objects of \(\IdxW(X)\).
	Then
	\[
		A \MonProd[\IdxW(X)] B = \Delta^{*}(A \MonProd[\ExtV] B)
	\]
	where \(\Delta \colon X \to X \Prod X\) is the diagonal arrow.
\end{proof}

Notice that the proof makes essential use of the assumption that \(\E\) has (at least) finite products.

\section{Externalization of internal categories}\label{sec:ext-int-cats}

The last piece of background material that we present concerns the relationship between internal and indexed categories, and makes an essential use of the theory of indexed categories from \Cref{sec:indexed-cats}.
For the central notion of externalization of an internal category, we shall follow the exposition of \textcite{hyland1988small,Hyland90discreteobjects}.

Let \(\A\) be a category in \(\E\) and \(X\) an object of \(\E\).
We regard an arrow \(X \to A_0\) as representing an indexed family of objects of \(\A\) over the indexing object \(X\).
Given two such indexed families \(x_0, x_1 \colon X \to A_0\), consider the pullback
\[
	\begin{tikzcd}
		{(x_0, x_1)}^{*}A_1
			\ar[d, "{p}"'] \ar[r] \ar[dr, phantom, "{\lrcorner}" very near start]
		& A_1
			\ar[d, "{(\Source[\A], \Target[\A])}"] \\
		X
			\ar[r, "{(x_0, x_1)}"']
		& A_0 \Prod A_0.
	\end{tikzcd}
\]
Then the sections of \(p\) represent indexed families of arrows of \(\A\) with domain \(x_0\) and codomain \(x_1\).
Given another family \(x_2 \colon X \to A_0\), the composition in \(\A\) restricts to an indexed composition
\[
	\Comp[\A]_{|x_0, x_1, x_2} \colon {(x_1, x_2)}^{*}A_1 \Prod {(x_0, x_1)}^{*}A_1 \to {(x_0, x_2)}^{*}A_1
\]
inducing a composition of indexed families of arrows: given two families of arrows \(s_0 \colon X \to {(x_0, x_1)}^{*}A_1\) and \(s_1 \colon X \to {(x_1, x_2)}^{*}A_1\), their composition is defined as
\[
	s_1 \Comp[\Externalization{\A}[X]] s_0 \DefEq X \xrightarrow{(s_1, s_0)} {(x_1, x_2)}^{*}A_1 \Prod {(x_0, x_1)}^{*}A_1 \xrightarrow{\Comp[\A]_{|x_0, x_1, x_2}} {(x_0, x_2)}^{*}A_1.
\]
Moreover, a family of objects \(x \colon X \to A_0\) induces a family of identity arrows \(\Id[\Externalization{\A}[X]](x) \colon X \to {(x, x)}^{*}A_1\).
These data form the category \(\Externalization{\A}[X]\) of indexed families of objects and morphisms of \(\A\) over \(X\).

Given a reindexing \(u \colon X' \to X\), precomposition reindexes a family of objects \(x \colon X \to A_0\) over \(X\) the family \(xu\) over \(X'\);	a family of arrows \(s \colon X \to {(x_0, x_1)}^{*}A_1\) is reindexed to \(u^{*}s \colon X' \to {(u x_0, u x_1)}^{*}A_1\) by pulling back the section \(s\) along \((x_0, x_1)\).
That gives a functor \(u^{*} \colon \Externalization{\A}[X] \to \Externalization{\A}[X']\).

The above discussion leads to the following result.

\begin{proposition}
	For \(\A\) an internal category in \(\E\), there is an indexed category \(\Externalization{\A}\) given by \(\Externalization{\A}(X) \DefEq \Externalization{\A}[X]\) and \(\Externalization{\A}(u) \DefEq u^{*}\).
\end{proposition}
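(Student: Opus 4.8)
The plan is to check that the data $\Externalization{\A}(X) \DefEq \Externalization{\A}[X]$ and $\Externalization{\A}(u) \DefEq u^{*}$ assemble into a pseudofunctor $\E^{\Op} \to \Cat$. By the discussion preceding the statement each $\Externalization{\A}[X]$ is already a category and each reindexing $u^{*}$ is already a functor, so the only things left to supply are the coherence isomorphisms $u^{*}v^{*} \cong (vu)^{*}$ and $\mathrm{id}^{*} \cong \Id(\Externalization{\A}[X])$ together with the verification of the two pseudofunctor axioms.

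First I would note that reindexing acts strictly on objects: a family $x \colon X \to A_0$ is sent by $u^{*}$ to the composite $xu$, and $(xu)v = x(uv)$ and $x\mathrm{id}_X = x$ hold on the nose, so the object parts of the comparisons are identities. On morphisms, $u^{*}$ sends a section $s$ of $(x_0, x_1)^{*}A_1 \to X$ to its pullback along $u$; since a pasting of pullback squares is again a pullback, the iterated pullback computing $u^{*}v^{*}s$ and the single pullback computing $(vu)^{*}s$ agree up to the canonical isomorphism between the chosen pullbacks, and this isomorphism is natural in $s$, giving $u^{*}v^{*} \cong (vu)^{*}$; likewise pulling back along an identity is canonically isomorphic to the identity functor. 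I would also check that these canonical comparisons are compatible with the indexed composition $\Comp[\A]_{|x_0, x_1, x_2}$ and with the identity-assigning maps of $\Externalization{\A}[X]$; this holds because those operations are themselves obtained from the internal composition and identity of $\A$ by pullback, and pullback preserves the commuting squares that witness functoriality, so the comparisons are genuine natural isomorphisms of functors and not merely of the underlying object-assignments.

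The pentagon and triangle coherence axioms then hold automatically: any arrow whose codomain is one of the chosen pullbacks is determined by its two projections, so two coherence isomorphisms built from these universal properties that have the same composites with the pullback projections must coincide, and in each axiom the relevant composites visibly agree. Hence $\Externalization{\A}$ is an $\E$-indexed category. I do not anticipate any genuine difficulty here; the only care needed is the bookkeeping with the chosen pullbacks. A second, essentially equivalent route would sidestep the choices: form the Grothendieck-style total category with objects $(X, x \colon X \to A_0)$ and with a morphism $(X, x) \to (Y, y)$ a pair $(u \colon X \to Y, s)$ where $s$ is a section of $(x, yu)^{*}A_1 \to X$, check that the evident projection to $\E$ is a cloven fibration (with cartesian arrows read off from the pullback squares displayed just before the statement), and then invoke \Cref{thm:groth-constr} to obtain the indexed category. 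The pullback-pasting argument is the more elementary of the two.
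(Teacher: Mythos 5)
Your verification is correct: the statement only asks for an indexed category, i.e.\ a pseudofunctor \(\E^{\Op} \to \Cat\), and supplying the canonical pullback-pasting comparisons \(u^{*}v^{*} \Iso (vu)^{*}\), \(\Id^{*} \Iso \Id\), checking their naturality and compatibility with \(\Comp[\A]_{|x_0,x_1,x_2}\) and the identity-assigning maps, and settling the coherence axioms by the universal property of the pullback does prove the proposition (the paper itself leaves this verification implicit, stating the result as a consequence of the preceding discussion). The route differs, however, from what the paper intends, in a way that matters just afterwards: a section \(s\) of \({(x_0,x_1)}^{*}A_1 \to X\) is the same thing as an arrow \(\tilde{s} \colon X \to A_1\) with \((\Source[\A],\Target[\A])\,\tilde{s} = (x_0,x_1)\), and if one defines \(u^{*}s\) as the unique section of the \emph{chosen} pullback \({(x_0 u, x_1 u)}^{*}A_1 \to X'\) corresponding to \(\tilde{s}u\), then reindexing is literally precomposition and is strictly functorial, so \(\Externalization{\A}\) is a strict functor \(\E^{\Op} \to \Cat\) with identity comparison cells. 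This strictness is exactly what the remark following the proposition asserts and later exploits (e.g.\ in distinguishing which indexed categories arise from internal ones), whereas your chosen-pullback bookkeeping only exhibits pseudofunctoriality directly. So your argument is adequate for the statement as written but does slightly more work to prove slightly less; also, a small terminological point: the coherence axioms for a pseudofunctor are the associativity and unit axioms, not the pentagon and triangle (those belong to monoidal categories), though your argument via uniqueness of maps into pullbacks is the right one.
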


\begin{remark}
	Notice that the indexed category arising from an internal one is given by a strict (rather than merely a pseudo) functor \(\E^{\Op} \to \Cat\).
	Then, evidently, internal categories yield rather special indexed categories, and not all indexed categories can be obtained from an internal one.
\end{remark}

Moreover, the construction extends to the monoidal context, as shown in the following proposition.

\begin{proposition}\label{prop:index-mon-cat}
	Let \(\IntV\) be an internal monoidal category in \(\E\).
	Then, \(\Externalization{\IntV}\) is an indexed monoidal category on \(\E\).
\end{proposition}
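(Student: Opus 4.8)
The plan is to build on the preceding proposition, which already gives that $\Externalization{\IntV}$ is an $\E$-indexed category, and to upgrade it to an indexed monoidal category by transporting the pseudomonoid structure of $\IntV$ along externalization. Recall that an internal monoidal category is exactly a pseudomonoid in the 2-category $\Cat[\E]$, as observed after the definition of internal monoidal category.

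First I would observe that externalization assembles into a 2-functor from $\Cat[\E]$ to the 2-category of $\E$-indexed categories which preserves finite products. Product-preservation is immediate from the pointwise description of the fibres: an object of the fibre $\Externalization{\IntV \Prod[\Cat[\E]] \IntW}[X]$ is an arrow $X \to V_0 \Prod W_0$, hence a pair of arrows $X \to V_0$ and $X \to W_0$, and similarly families of morphisms decompose as pairs, so $\Externalization{\IntV \Prod[\Cat[\E]] \IntW} \Iso \Externalization{\IntV} \Prod \Externalization{\IntW}$ (the right-hand product being computed pointwise in $\Cat$), naturally in $X$; likewise $\Externalization{\Terminal[\Cat[\E]]}$ is the terminal indexed category, since each fibre is the terminal category. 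Checking 2-functoriality of $\Externalization{-}$ is a routine verification in the internal language of $\E$, using the defining pullbacks of \Cref{sec:ext-int-cats}.

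Then, applying this finite-product-preserving 2-functor to the pseudomonoid $\IntV$ transports the structure: $\Externalization{\MonProd[\IntV]}$ becomes the indexed monoidal product, $\Externalization{\MonUnit[\IntV]}$ the indexed unit, and the externalizations of the associator and the unitors remain natural isomorphisms satisfying the triangle and pentagon axioms (by 2-functoriality), so $\Externalization{\IntV}$ acquires the structure of a pseudomonoid in $\E$-indexed categories. Finally I would unwind this: evaluating the structure morphisms at an object $X$ endows the fibre $\Externalization{\IntV}[X]$ with a monoidal structure, and pseudonaturality of the structure morphisms in an arrow $u \colon X' \to X$ makes each reindexing functor $u^*$ a strong (indeed strict) monoidal functor, coherently in $u$; that is, $\Externalization{\IntV}$ is a pseudofunctor $\E^{\Op} \to \MonCat$, which is precisely an $\E$-indexed monoidal category.

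The main obstacle is this last identification, together with the 2-cell part of product-preservation: pinning down the 2-category of $\E$-indexed categories and its pointwise cartesian structure carefully enough that ``pseudomonoid therein'' literally unpacks to the definition of indexed monoidal category. This is folklore but bureaucratic. A more hands-on alternative, avoiding the abstract transport, is to define the monoidal structure on each fibre $\Externalization{\IntV}[X]$ directly from the externalized data, verify the triangle and pentagon pointwise (they descend from the corresponding axioms in $\IntV$ by functoriality of externalization), and check the strong monoidality of each $u^*$ straight from the universal property of the pullbacks defining reindexing.
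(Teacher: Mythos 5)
Your proposal is correct, but it takes a genuinely different route from the paper. The paper proves the proposition by the direct fibrewise construction that you relegate to your final paragraph as the ``hands-on alternative'': it defines the monoidal product on objects and arrows of each fibre \(\Externalization{\IntV}[X]\) explicitly (using that \(\MonProd[\IntV]\) restricts to the hom-pullbacks), takes the unit to be the constant family at \(\MonUnit[\IntV]\), defines associator and unitors pointwise, and observes that reindexing preserves the monoidal structure. Your primary route instead transports the pseudomonoid structure abstractly: internal monoidal categories are pseudomonoids in \(\Cat[\E]\) (as the paper notes after the definition), externalization is a finite-product-preserving 2-functor into \(\E\)-indexed categories, product-preserving 2-functors preserve pseudomonoids, and a pseudomonoid in \(\E\)-indexed categories unpacks to a pseudofunctor \(\E^{\Op} \to \MonCat\). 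This is sound and buys conceptual economy, since the pentagon and triangle in each fibre and the coherence of the reindexing constraints follow from 2-functoriality rather than pointwise checks; but it rests on two lemmas you only gesture at --- that \(\Externalization{\MathDash}\) really is a product-preserving 2-functor (the paper constructs \(\Externalization{F}\) and \(\Externalization{\alpha}\) but never packages them into a 2-functor), and the ``folklore but bureaucratic'' identification of pseudomonoids in indexed categories with indexed monoidal categories --- whose full verification is comparable in effort to the paper's direct computation. The paper's explicit formulas are also what get reused later (in the monoidal structure on the total category and in the comparison with small \(\Externalization{\IntV}\)-categories), so the concrete proof doubles as a record of the structure. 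One small caveat: the abstract transport only yields strong monoidal reindexing functors; the strictness you assert is true, but it is an additional observation read off from the explicit formulas, not a consequence of the transport.
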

\begin{proof}
	Let \(X\) be an object in \(\E\).
	Then, \(\Externalization{\IntV}[X]\) has a monoidal structure induced by that of \(\IntV\).
	The monoidal product on objects is defined as
	\[
		(X \xrightarrow{x} V_0) \MonProd[\Externalization{\IntV}[X]] (X \xrightarrow{x'} V_0) \DefEq X \xrightarrow{(x, x')} V_0 \Prod V_0 \xrightarrow{\MonProd[\IntV]} V_0.
	\]
	To define the monoidal product of arrows, let
	\begin{gather*}
		(X \xrightarrow{x_0} V_0) \xrightarrow{f \colon X \to (x_0, x_1)^* V_1} (X \xrightarrow{x_1} V_0) \\
		\shortintertext{and}
		 (X \xrightarrow{x_0'} V_0) \xrightarrow{f' \colon X \to (x_0', x_1')^* V_1} (X \xrightarrow{x_1'} V_0)
	\end{gather*}
	be arrows of \(\Externalization{\IntV}[X]\), and notice that \(\MonProd[\IntV]\) restricts to
	\[
		(x_0, x_1)^* V_1 \Prod (x_0', x_1')^* V_1 \to (x_0 \MonProd[\Externalization{\IntV}[X]] x_0', x_1 \MonProd[\Externalization{\IntV}[X]] x_1')^*V_1.
	\]
	Then the monoidal product of arrows \(f \MonProd[\Externalization{\IntV}[X]] f'\) is given by the arrow
	\[
		X \xrightarrow{(f, f')} (x_0, x_1)^* V_1 \Prod (x_0', x_1')^* V_1 \xrightarrow{\MonProd[\IntV]} (x_0 \MonProd[\IntV] x_0', x_1 \MonProd[\IntV] x_1')^*V_1.
	\]
	The monoidal unit \(\MonUnit[\Externalization{\IntV}[X]]\) is defined by the constant family indexed by \(X\) on the monoidal unit of \(\IntV\).
	The structural isomorphisms, associator and unitors are defined point-wise.
	Moreover, reindexing preserves the monoidal product.
\end{proof}

\begin{remark}
	The strictness of the monoidal products of the fibers of the indexed monoidal category \(\Externalization{\IntV}\) obtained from an internal monoidal category \(\IntV\) is the same as that of the original monoidal product of \(\IntV\), so it will generally not be strict monoidal.
	Still, the reindexing functors for \(\Externalization{\IntV}\) strictly preserve the monoidal structure, regardless of how strict the monoidal product of \(\IntV\) is.
	That means that the (actually strict) functor \(\Externalization{\IntV} \colon \E \to \MonCat\) factorizes through the 2-category of (non-necessarily-strict) monoidal categories, strict monoidal functors and monoidal natural transformations.
	Such a category is quite uncommon, since normally there is little use for strict monoidal functors, especially between non-strict monoidal categories.
	Nonetheless, this shows that the indexed monoidal categories arising from internal monoidal categories are rather special ones.
\end{remark}

\begin{remark}
	The fiber \(\Externalization{\A}[X]\) over an object \(X\) is enriched over \(\sfrac{\E}{X}\):
	\begin{description}
		\item[Homset:] \(\Hom[\Externalization{\A}[X]](x_0, x_1) \DefEq {(x_0, x_1)}^{*}A_1 \xrightarrow{p} X\).
		\item[Composition:] \(\Comp[\Externalization{\A}[X]](x_0, x_1, x_2) \DefEq \Comp[\A]_{|x_0, x_1, x_2}\).
		\item[Identity:] \(\Id[\Externalization{\A}[X]](x)\).
	\end{description}
	Reindexing is compatible with this structure, in that the reindexing of the homset is the same as the homset of the reindexing.
	More explicitly, given a reindexing \(u \colon X' \to X\), by pullback-pasting we have that
	\[
		u^{*}{(x_0, x_1)}^{*}A_1 \Iso {(x_0 u, x_1 u)}^{*}A_1.
	\]
	In fact, the reindexing functor \(u^* \colon \Externalization{\A}[X] \to \Externalization{\A}[X']\) is a fully-faithful functor of enriched categories.
	Then \(\Externalization{\A}\) is an indexed enriched category over the self-indexing of \(\E\) (see \Cref{exm:self-indexing}).
	Equivalently, it is the locally internal category over \(\E\) whose underlying indexed category is (up to natural isomorphism) \(\Externalization{\A}\) as an indexed category \autocite{shulman2013enriched,johnstone2002sketches}.
\end{remark}

As stated in \Cref{thm:groth-constr}, indexed categories are equivalent to cloven fibrations.
So, we can give an abstract definition of the externalization of an internal category as follows.

\begin{definition}[externalization]
	The \Def{externalization} of an internal category \(\A\) is the total category for the fibration associated to the indexed category \(\Externalization{\A}\).
	With abuse of notation, we denote the externalization of \(\A\) with \(\Externalization{\A}\), and context will usually suffice to distinguish between the use of the notation as a fibration or as an indexed category.
\end{definition}

For practical purposes, it is useful to make the previous definition more explicit.
The externalization of \(\A\) is the category given by the data
\begin{description}
	\item[Objects:] families of objects of \(\A\) indexed over objects of \(\E\), that is, arrows \(X \to A_0\) with \(X\) in \(\E\).
	\item[Morphisms:] an arrow \((x \colon X \to A_0) \to (y \colon Y \to A_0)\) is given by a reindexing \(u \colon X \to Y\) and a family of arrows \(x \to yu\), that is, a section of the projection \(p \colon {(x, yu)}^{*}A_1 \to X\).
	\item[Composition:] the composition is given by
	\begin{multline*}
		(X \xrightarrow{x} A_0)
			\xrightarrow{\big( X \xrightarrow{u} Y, X \xrightarrow{f} {(x, yu)}^{*}A_1 \big)}
		(Y \xrightarrow{y} A_0)
			\xrightarrow{\big( Y \xrightarrow{v} Z, Y \xrightarrow{g} {(y, zv)}^{*}A_1 \big)}
		(Z \xrightarrow{z} A_0) \\
		\DefEq (X \xrightarrow{x} A_0)
			\xrightarrow{\big( X \xrightarrow{u} Y \xrightarrow{v} Z, u^{*} g \Comp[\Externalization{\A}[X]] f \colon X \to {(x, zvu)}^{*}A_1 \big)}
		(Z \xrightarrow{z} A_0).
	\end{multline*}
	\item[Identity:] the family of identity arrows.
\end{description}

Let \(F \colon \A \to \B\) be a functor of internal categories.
Then, there is a functor of fibered categories \(\Externalization{F} \colon \Externalization{\A} \to \Externalization{\B}\) defined on objects as
\[
	\Externalization{F}(X \xrightarrow{x} A_0) \DefEq X \xrightarrow{x} A_0 \xrightarrow{F_0} B_0
\]
and on morphisms as
\begin{multline*}
	\Externalization{F} \Big(
		\big( X \xrightarrow{x} A_0 \big)
			\xrightarrow{
				\big(X \xrightarrow{u} Y, X \xrightarrow{f} {(x, yu)}^{*}A_1 \big)
			}
		\big( Y \xrightarrow{y} A_0 \big)
	\Big) \\
	\DefEq \big( X \xrightarrow{x} A_0 \xrightarrow{F_0} B_0 \big)
		\xrightarrow{
			\Big(
				X \xrightarrow{u} Y,
				X \xrightarrow{f} {(x, yu)}^{*}A_1 \xrightarrow{F_1} {(F_0 x, F_0 yu )}^{*}B_1
			\Big)
		}
	\big( Y \xrightarrow{y} A_0 \xrightarrow{F_0} B_0 \big)
\end{multline*}
which restricts to a functor on the fibers \(\Externalization{F}[X] \colon \Externalization{\A}[X] \to \Externalization{\B}[X]\).

Let \(\alpha \colon F \to G \colon \A \to \B\) be a natural transformation.
Then, there is a natural transformation of fibered categories \(\Externalization{\alpha} \colon \Externalization{F} \to \Externalization{G} \colon \Externalization{\A} \to \Externalization{\B}\), defined as
\[
	\Externalization{\alpha}(X \xrightarrow{x} A_0) \DefEq
		\big( X \xrightarrow{x} A_0 \xrightarrow{F_0} B_0 \big)
			\xrightarrow{( \Id[X], X \xrightarrow{x} A_0 \xrightarrow{\alpha} (F_0 x, G_0 x)^* B_1 )}
		\big( X \xrightarrow{x} A_0 \xrightarrow{G_0} B_0 \big)
\]
which restricts to a natural transformation on the fibers \(\Externalization{\alpha}[X] \colon \Externalization{F}[X] \to \Externalization{G}[X]\).

\begin{remark}
	If \(\IntV\) is a monoidal category in \(\E\), then \(\Externalization{\IntV}\) is an indexed monoidal category by \Cref{prop:index-mon-cat}.
	By \Cref{thm:Grothendieck-mon}, it follows that the externalization \(\Externalization{\IntV}\) has an induced monoidal structure.
	Explicitly, the monoidal product on objects is given by
	\[
		(X \xrightarrow{x} V_0) \MonProd[\Externalization{\IntV}] (Y \xrightarrow{y} V_0) \DefEq X \Prod Y \xrightarrow{x \Prod y} V_0 \Prod V_0 \xrightarrow{\MonProd[\IntV]} V_0.
	\]
	The monoidal product on arrows
	\begin{gather*}
		(X \xrightarrow{x} V_0)
			\xrightarrow{\big( X \xrightarrow{u} Y, X \xrightarrow{f} {(x, yu)}^{*}V_1 \big)}
		(Y \xrightarrow{y} V_0) \\
	\shortintertext{and}
		(Z \xrightarrow{z} V_0)
			\xrightarrow{\big( Z \xrightarrow{v} W, Z \xrightarrow{g} {(z, wv)}^{*}V_1 \big)}
		(W \xrightarrow{w} V_0)	
	\end{gather*}
	is the arrow \((X \xrightarrow{x} V_0) \MonProd[\Externalization{\IntV}] (Z \xrightarrow{z} V_0) \to (Y \xrightarrow{y} V_0) \MonProd[\Externalization{\IntV}] (W \xrightarrow{w} V_0)\)
	indexed by \(u \Prod v \colon X \Prod Z \to Y \Prod W\) and given by
	\[
		f \MonProd[\IntV] g \colon X \Prod Z \to {(x \MonProd[\IntV] z, yu \MonProd[\IntV] wv)}^{*}V_1.
	\]
	The monoidal unit is \(\MonUnit[\IntV] \colon \Terminal[\E] \to V_0\).
	Finally, the structural isomorphisms are induced by those of \(\IntV\).
\end{remark}

\section{Internal enriched categories}\label{sec:int-enriched-cats}

We are finally ready to introduce the main topic of this paper: the theory internal enrichment.
We shall derive the necessary notions by the process of internalization of the theory of standard enrichment.
Substantially, that amounts to translating the definitions from enriched category theory into the internal language of the ambient category.
In other words, we take advantage of the fact that the axioms of the theory of enrichment are expressible in cartesian logic.

From now on, let \(\IntV\) be an internal monoidal category in \(\E\).
We then define the notion of enrichment in \(\IntV\) internal to the ambient category \(\E\).

\begin{definition}[internal enriched category]
	An \Def{internal \(\IntV\)-enriched category} \(\X\) in \(\E\), or \(\IntV\)-category, is given by the following data.
	\begin{description}
		\item[Underlying object:] an object \(X\) of \(\E\).
		\item[Internal hom:] a morphism \(\Hom[\X] \colon X \times X \to V_0\).
		\item[Composition:] a morphism \({\Comp[\X]} \colon X \times X \times X \to V_1\) such that, in context \(x_0, x_1, x_2 \colon X\),
		\[
			\Comp[\X](x_0, x_1, x_2) \colon \Hom[\X](x_1, x_2) \MonProd[\IntV] \Hom[\X](x_0, x_1) \to_{\IntV} \Hom[\X](x_0, x_2).
		\]
		\item[Identity:] a morphism \(\Id[\X] \colon X \to V_1\) such that, in context \(x \colon X\),
		\[
			\Id[\X] \colon \MonUnit[\IntV] \to_{\IntV} \Hom[\X](x, x).
		\]
	\end{description}
	Moreover, it has to satisfy the following axioms (in context \(x_0, x_1, x_2, x_3 \colon X\)).
	\begin{gather}
		\label[axiom]{eq:int_enr_cat_axiom_associativity}
		\hspace{-2em}\begin{tikzcd}[ampersand replacement = \&, column sep = small, row sep = large]
			\big( \Hom[\X](x_2, x_3) \MonProd[\IntV] \Hom[\X](x_1, x_2) \big) \MonProd[\IntV] \Hom[\X](x_0, x_1)
					\ar[d, "{\MonAssoc[\IntV] \big( \Hom[\X](x_2, x_3), \Hom[\X](x_1, x_2), \Hom[\X](x_0, x_1) \big)}" description]
					\ar[r, bend left = 10, "{{\Comp[\X]}(x_1, x_2, x_3) \MonProd[\IntV] \Hom[\X](x_0, x_1)}"]
				\& \Hom[\X](x_1, x_3) \MonProd[\IntV] \Hom[\X](x_0, x_1)
					\ar[dd, "{{\Comp[\X]}(x_0, x_1, x_3)}"] \\
			\Hom[\X](x_2, x_3) \MonProd[\IntV] \big( \Hom[\X](x_1, x_2) \MonProd[\IntV] \Hom[\X](x_0, x_1) \big)
					\ar[d, "{\Hom[\X](x_2, x_3) \MonProd[\IntV] {\Comp[\X]}(x_0, x_1, x_2)}"'] \\
			\Hom[\X](x_2, x_3) \MonProd[\IntV] \Hom[\X](x_0, x_2)
					\ar[r, "{{\Comp[\X]}(x_0, x_2, x_3)}"']
				\& \Hom[\X](x_0, x_3)
		\end{tikzcd} \\
		\label[axiom]{eq:int_enr_cat_axiom_left_unit_law}
		\begin{tikzcd}[ampersand replacement = \&, column sep = small, row sep = large]
			\MonUnit[\IntV] \MonProd[\IntV] \Hom[\X](x_0, x_1)
					\ar[rr, "{\MonUnitorL[\IntV] \big( \Hom[\X](x_0, x_1) \big) }"]
					\ar[dr, "{\Id[\X](x_1) \MonProd[\IntV] \Hom[\X](x_0, x_1) }"']
				\&\& \Hom[\X](x_0, x_1) \\
			\& \Hom[\X](x_1, x_1) \MonProd[\IntV] \Hom[\X](x_0, x_1)
					\ar[ur, "{{\Comp[\X]}(x_0, x_1, x_1)}"']
		\end{tikzcd} \\
		\label[axiom]{eq:int_enr_cat_axiom_right_unit_law}
		\begin{tikzcd}[ampersand replacement = \&, column sep = small, row sep = large]
			\Hom[\X](x_0, x_1) \MonProd[\IntV] \MonUnit[\IntV]
					\ar[rr, "{\MonUnitorR[\IntV] \big( \Hom[\X](x_0, x_1) \big) }"]
					\ar[dr, "{ \Hom[\X](x_0, x_1) \MonProd[\IntV] \Id[\X](x_0) }"']
				\&\& \Hom[\X](x_0, x_1) \\
			\& \Hom[\X](x_0, x_0) \MonProd[\IntV] \Hom[\X](x_0, x_1)
					\ar[ur, "{{\Comp[\X]}(x_0, x_0, x_1)}"']
		\end{tikzcd}
	\end{gather}
\end{definition}

Notice how the conventions on the internal language of \(\E\) allow one to express those axioms in a form very close to that used to define standard enriched categories.

\begin{example}
	Let \(\ExtV\) be a small monoidal category.
	Then, \(\ExtV\) is an internal category in \(\SetCat\), and internal \(\ExtV\)-enriched categories in \(\SetCat\) are standard (small) \(\ExtV\)-enriched categories.
\end{example}

Continuing in the style of the previous definition, we give a notion of internal enriched functor, by translating the standard definition into the internal language.

\begin{definition}[internal enriched functor]
	Let \(\X\) and \(\Y\) be \(\IntV\)-enriched categories.
	A \Def{\(\IntV\)-enriched functor}, or \(\IntV\)-functor, \(F \colon \X \to \Y\) is given by the following data.
	\begin{description}
		\item[Objects component:] an arrow \(F_0 \colon X \to Y\).
		\item[Morphisms component:] an arrow \(F_1 \colon X \times X \to V_1\) such that, in context \(x_0, x_1 \colon X \),
			\[
				F_1(x_0, x_1) \colon \Hom[\X](x_0, x_1) \to_{\IntV} \Hom[\Y] \big( F_0(x_0), F_0(x_1) \big).
			\]
	\end{description}
	Moreover, it has to satisfy the following axioms (in context \(x_0, x_1, x_2 \colon X\)).
	\begin{gather}
		\begin{tikzcd}[ampersand replacement = \&, column sep = small, row sep = large]
		\label[axiom]{eq:int_enr_cat_axiom_functoriality}
			\Hom[\X](x_1, x_2) \MonProd[\IntV] \Hom[\X](x_0, x_1)
					\ar[d, "{F_1(x_1, x_2) \MonProd[\IntV][1] F_1(x_0, x_1)}"']
					\ar[r, "{{\Comp[\X]}(x_0, x_1, x_2)}"]
				\& \Hom[\X](x_0, x_2)
					\ar[d, "{F_1(x_0, x_2)}"] \\
			\Hom[\Y] \big( F_0(x_1), F_0(x_2) \big) \MonProd[\IntV] \Hom[\Y] \big( F_0(x_0), F_0(x_1) \big)
					\ar[r, bend right = 15, "{{\Comp[\Y]} \big( F_0(x_0), F_0(x_1), F_0(x_2) \big)}"']
				\& \Hom[\Y] \big( F_0(x_0), F_0(x_2) \big)
		\end{tikzcd} \\
		F_1 \Id[\X] = \Id[\Y] F_0
		\label[axiom]{eq:int_enr_cat_axiom_functoriality-id}
	\end{gather}
\end{definition}

We would expect the definition above to provide a category \(\Cat[\E][\IntV]\) of internal \(\IntV\)-enriched categories and functors.
We present the relevant data for that.

The composition of \(\IntV\)-functors \(F \colon \X \to \Y\) and  \(G \colon \Y \to \Z\) is defined, in context \(x_0, x_1 \colon X\), as follows.
\begin{align*}
	{(G F)}_0 &\DefEq G_0 F_0 \colon X \to Z \\
	{(G F)}_1(x_0, x_1) &\DefEq
	\begin{tikzcd}[ampersand replacement = \&]
		\Hom[\X](x_0, x_1)
			\ar[d, "{F_1(x_0, x_1)}"] \\
		\Hom[\Y] \big( F_0(x_0), F_0(x_1) \big)
			\ar[d, "{G_1 \big( F_0(x_0), F_0(x_1) \big)}"] \\
		\Hom[\Z] \big( G_0 F_0 (x_0), G_0 F_0(x_1) \big)
	\end{tikzcd}
\end{align*}

The identity \(\IntV\)-functor \(\Id[\Cat[\E][\IntV]](\X) \colon \X \to \X\) on \(\X\) is defined as follows.
\begin{align*}
	{(\Id[\Cat[\E][\IntV]](\X))}_0 &\DefEq X \xrightarrow{\Id(X)} X \\
	{(\Id[\Cat[\E][\IntV]](\X))}_1 &\DefEq X \Prod X \xrightarrow{\Hom[\X]} V_0 \xrightarrow{\Id[\IntV]} V_1
\end{align*}

It is just an exercise in the internal language to prove that the data so defined yield a category, as stated in the following proposition.

\begin{proposition}
	Composition and identity of internal enriched functors strictly satisfy associativity and unitarity.
	Then, \(\IntV\)-enriched categories and functors form a category \(\Cat[\E][\IntV]\).
\end{proposition}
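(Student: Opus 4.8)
The plan is to push every required identity down to the axioms of the internal category $\IntV$ --- associativity and two-sided unitality of $\Comp[\IntV]$ and $\Id[\IntV]$ --- together with the plain associativity and unit laws of composition in $\E$, all arguments being carried out in the internal language of $\E$, which is sound since $\E$ has finite limits. The only further structural fact needed is that $\MonProd[\IntV]\colon\IntV\Prod[\Cat[\E]]\IntV\to\IntV$ is a \emph{functor}, hence preserves $\Comp[\IntV]$ and $\Id[\IntV]$; this is precisely what makes the functoriality axiom stable under composition.

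First I would check that the data displayed before the proposition really do define $\IntV$-functors. For $G F$, the source/target condition on ${(G F)}_1$ is read off from those of $F_1$ and $G_1$. \Cref{eq:int_enr_cat_axiom_functoriality} for $G F$ is obtained by applying the functoriality square of $F$ and then that of $G$ (the latter with its objects relabelled along $F_0$), and finally rewriting $\big(G_1\MonProd[\IntV]G_1\big)\Comp[\IntV]\big(F_1\MonProd[\IntV]F_1\big)$ as ${(G F)}_1\MonProd[\IntV]{(G F)}_1$ by functoriality of $\MonProd[\IntV]$; \Cref{eq:int_enr_cat_axiom_functoriality-id} for $G F$ is the composite of the identity axioms of $F$ and $G$. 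For $\Id[\Cat[\E][\IntV]](\X)$, \Cref{eq:int_enr_cat_axiom_functoriality} reduces, using unitality of $\Comp[\IntV]$ and preservation of identities by $\MonProd[\IntV]$, to the trivial equality $\Comp[\X]=\Comp[\X]$, and \Cref{eq:int_enr_cat_axiom_functoriality-id} holds because $\Id[\IntV]$ is a unit for $\Comp[\IntV]$.

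Next I would establish strict associativity and unitality. On underlying objects, ${(H(G F))}_0=H_0G_0F_0={((H G)F)}_0$ and ${(F\circ\Id[\Cat[\E][\IntV]](\X))}_0=F_0\,\Id(X)=F_0=\Id(Y)\,F_0={(\Id[\Cat[\E][\IntV]](\Y)\circ F)}_0$ by the category axioms of $\E$. On the morphisms component, in context $x_0,x_1\colon X$, both ${(H(G F))}_1(x_0,x_1)$ and ${((H G)F)}_1(x_0,x_1)$ unfold to the threefold composite $H_1\Comp[\IntV]G_1\Comp[\IntV]F_1$ in $\IntV$ (indices suppressed), the two bracketings agreeing by associativity of $\Comp[\IntV]$; and ${(F\circ\Id[\Cat[\E][\IntV]](\X))}_1(x_0,x_1)=F_1(x_0,x_1)\Comp[\IntV]\Id[\IntV]\big(\Hom[\X](x_0,x_1)\big)=F_1(x_0,x_1)$, with the symmetric computation for $\Id[\Cat[\E][\IntV]](\Y)\circ F$, by the right and left unit laws of $\Comp[\IntV]$. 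Since every one of these equalities is strict, composition of $\IntV$-functors is strictly associative and unital, and $\IntV$-enriched categories and $\IntV$-functors form a category.

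The work is essentially bookkeeping; the one step that deserves care is the functoriality-axiom check for $G F$, where the composition $\Comp[\IntV]$ internal to $\IntV$ must be kept distinct from the monoidal product $\MonProd[\IntV]$, and functoriality of $\MonProd[\IntV]$ must be invoked at just the right moment. I do not anticipate any genuine difficulty beyond that.
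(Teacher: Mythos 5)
Your proof is correct and is exactly the argument the paper has in mind: the paper dismisses this proposition as ``just an exercise in the internal language,'' and your write-up is that exercise carried out, reducing everything to the category axioms of \(\IntV\) (associativity and unitality of \(\Comp[\IntV]\), with \(\Id[\IntV]\)) plus functoriality of \(\MonProd[\IntV]\) for the interchange step. No gaps; the care you flag about distinguishing \(\Comp[\IntV]\) from \(\MonProd[\IntV]\) in the functoriality check for \(GF\) is indeed the only delicate point.
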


\begin{example}
	There is an underlying-object functor \(U \colon \Cat[\E][\IntV] \to \E\) sending \(\IntV\)-enriched categories to their underlying object, and \(\IntV\)-enriched functors to their object-component.
\end{example}

\begin{example}
	Let \(X\) be an object of \(\E\).
	The indiscrete \(\IntV\)-enriched category \(\Ind(X)\) on \(X\) is given by
	\[
		\Hom[\Ind(X)] \DefEq X \Prod X \xrightarrow{!} \Terminal[\E] \xrightarrow{\MonUnit[\IntV]} V_0.
	\]
	The rest of the structure follows from that.
	Analogously, a morphism \(f \colon X \to Y\) induces an indiscrete \(\IntV\)-enriched functor \(\Ind(f) \colon \Ind(X) \to \Ind(Y)\).
	Then, there is a functor \(\Ind \colon \E \to \Cat[\E][\IntV]\).
\end{example}

\begin{remark}
To define the discrete \(\IntV\)-enriched category over an object of \(\E\), we would need to assume some extra hypothesis.
Firstly, we would need to be able to tell whether two elements of the underlying object of the \(\IntV\)-enriched category are equal.
Secondly, we would need an initial object in \(\IntV\) to be the homset of non-equal elements of the underlying object.
Both hypothesis do not hold in general.
For example, the first one does not hold in the effective topos.
\end{remark}

Finally, again by translating the standard definition into the internal language, we give the definition of internal enriched natural transformation.

\begin{definition}[internal enriched natural transformation]
	Let \(\X\) and \(\Y\) be \(\IntV\)-enriched categories, and \(F\) and \(G\) be \(\IntV\)-enriched functors \(\X \to \Y\).
	A \Def{\(\IntV\)-enriched natural transformation}, or \(\IntV\)-natural transformation, \(\alpha \colon F \to G \colon \X \to \Y\) is given by an arrow \(\alpha \colon X \to V_1\) such that, in context \(x \colon X\),
	\[
		\alpha(x) \colon \MonUnit[\IntV] \to_{\IntV} \Hom[\Y] \big( F_0(x), G_0(x) \big)
	\]	
	and satisfying, in context \(x_0, x_1 \colon X\), the following axiom.
	\begin{equation}
		\label[axiom]{eq:int_enr_cat_axiom_naturality}
		\begin{tikzcd}[ampersand replacement = \&, column sep = small, row sep = large]
			\MonUnit[\IntV] \MonProd[\IntV] \Hom[\X](x_0, x_1)
					\ar[r, bend left = 15, "{\alpha(x_1) \MonProd[\IntV] F_1(x_0, x_1)}"]
				\& \Hom[\Y] \big( F_0(x_1), G_0(x_1) \big) \MonProd[\IntV] \Hom[\Y] \big( F_0(x_0), F_0(x_1) \big)
					\ar[d, "{\Comp[\Y]\big( F_0(x_0), F_0(x_1), G_0(x_1) \big)}"] \\
			\Hom[\X](x_0, x_1)
					\ar[u, "{\MonUnitorL[\IntV]^{-1} \big( \Hom[\X](x_0, x_1) \big)}" description]
					\ar[d, "{\MonUnitorR[\IntV]^{-1} \big( \Hom[\X](x_0, x_1) \big)}" description]
				\& \Hom[\Y] \big( F_0(x_0), G_0(x_1) \big) \\
			\Hom[\X](x_0, x_1) \MonProd[\IntV] \MonUnit[\IntV]
					\ar[r, bend right = 15, "{G_1(x_0, x_1) \MonProd[\IntV] \alpha(x_0)}"']
				\& \Hom[\Y] \big( G_0(x_0), G_0(x_1) \big) \MonProd[\IntV] \Hom[\Y] \big( F_0(x_0), G_0(x_0) \big)
					\ar[u, "{\Comp[\Y]\big( F_0(x_0), G_0(x_0), G_0(x_1) \big)}"']
		\end{tikzcd}
	\end{equation}
\end{definition}

We would expect the definition above to provide a 2-category of internal \(\IntV\)-enriched categories, functors and natural transformations.
We present the relevant data for that.

Consider \(\IntV\)-categories, \(\IntV\)-functors and \(\IntV\)-natural transformations as shown in the diagram:
\[
	\begin{tikzcd}[column sep = large]
		\W
			\ar[r, "L"]
		& \X
			\ar[rr, shift left, bend left = 45, "F", ""{name=F, below}]
			\ar[rr, "G" description, ""{name=Ga, above}, ""{name=Gb, below}]
			\ar[rr, shift right, bend right = 45, "H"', ""{name=H, above}]
		&& \Y
			\arrow[Rightarrow, from=F, to=Ga, "\alpha"]
			\arrow[Rightarrow, from=Gb, to=H, "\beta"]
			\ar[r, "{R}"]
		& \Z
	\end{tikzcd}
\]
Vertical composition of \(\IntV\)-natural transformations \(\beta \Comp \alpha \colon F \to H \colon X \to Y\) is defined, in context \(x \colon X\) by
\[
	(\beta \Comp \alpha)(x) \DefEq
		\begin{tikzcd}
			\MonUnit[\IntV]
				\ar[d, "{\MonUnitorL[\IntV]^{-1}(\MonUnit[\IntV]) = \MonUnitorR[\IntV]^{-1}(\MonUnit[\IntV])}"] \\
			\MonUnit[\IntV] \MonProd[\IntV] \MonUnit[\IntV]
				\ar[d, "{\beta(x) \MonProd[\IntV] \alpha(x) }"] \\
			\Hom[\X] \big( G_0(x), H_0(x) \big) \MonProd[\IntV] \Hom[\X] \big( F_0(x), G_0(x) \big)
				\ar[d, "{\Comp[\Y] \big( F_0(x), G_0(x), H_0(x) \big)}"] \\
			\Hom[\X] \big(F_0(x), H_0(x) \big)
		\end{tikzcd}
\]
The left whiskering \(\alpha \HComp L \colon F L \to G L \colon \W \to \Y\) is defined, in context \(w \colon W\), as \((\alpha \HComp L)(w) \DefEq \alpha L_0(w)\).
The right whiskering \(R \HComp \beta \colon R \Comp G \to R \Comp H \colon \X \to \Z\) is defined, in context \(x \colon X\), as follows.
\[
	(R \HComp \beta)(x) \DefEq
		\begin{tikzcd}
			\MonUnit[\IntV]
				\ar[d, "{\beta(x)}"] \\
			\Hom[\Y] \big( G_0(x), H_0(x) \big)
				\ar[d, "{R_1 \big( G_0(x), H_0(x) \big)}"] \\
			\Hom[\Z] \big( R_0 G_0(x), R_0 H_0(x) \big)
		\end{tikzcd}
\]
The identity \(\IntV\)-natural transformation \(\Id[F] \colon F \to F \colon \X \to \Y\) is defined as \(\Id[F] \DefEq \Id[\Y] F_0\).

It is just an exercise in the internal language to prove that vertical composition and identity of internal enriched natural transformation strictly satisfy associativity and unitarity.
Thus, a pair of \(\IntV\)-enriched categories yield a category of functors and natural transformations, as stated in the following proposition.

\begin{proposition}
	Given \(\IntV\)-enriched categories \(\X\) and \(\Y\), the \(\IntV\)-enriched functors \(\X \to \Y\) and natural transformations between them form a category \(\Cat[\E][\IntV](\X, \Y)\).
\end{proposition}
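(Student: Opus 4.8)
The plan is to verify directly, in the internal language of \(\E\), that the data displayed above---vertical composition \(\beta \Comp \alpha\) and the identity \(\IntV\)-natural transformation \(\Id[F]\)---are well defined and satisfy the axioms of a category. Since every clause in the definitions of \(\IntV\)-category, \(\IntV\)-functor and \(\IntV\)-natural transformation is expressed in cartesian logic, each verification reduces to a diagram chase formally identical to the corresponding one in ordinary \(\ExtV\)-enriched category theory, with the monoidal coherence of \(\IntV\) (naturality of \(\MonAssoc[\IntV]\), \(\MonUnitorL[\IntV]\), \(\MonUnitorR[\IntV]\), the triangle and pentagon axioms, and functoriality of \(\MonProd[\IntV]\)) playing the role of the coherence of \(\ExtV\).

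The first and only genuinely substantial step is to check that the vertical composite \(\beta \Comp \alpha \colon F \to H\) of \(\IntV\)-natural transformations \(\alpha \colon F \to G\) and \(\beta \colon G \to H\) is again a \(\IntV\)-natural transformation, i.e.\ that it satisfies \Cref{eq:int_enr_cat_axiom_naturality}. Unfolding \((\beta \Comp \alpha)(x)\) in context \(x_0, x_1 \colon X\), one expands both branches of the naturality square and inserts the naturality pasting diagrams for \(\alpha\) (relating \(F_1\) and \(G_1\)) and for \(\beta\) (relating \(G_1\) and \(H_1\)). Gluing these two diagrams along the common edge built from \(G_1(x_0, x_1)\), then re-bracketing the resulting triple tensor of hom-objects via the associativity axiom \Cref{eq:int_enr_cat_axiom_associativity} for \(\Comp[\Y]\) and sliding the structural isomorphisms past tensor factors by naturality of \(\MonAssoc[\IntV]\), \(\MonUnitorL[\IntV]\) and \(\MonUnitorR[\IntV]\), yields the required equality. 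I expect this pasting to be the main obstacle: it is the one nontrivial coherence argument, and care is needed to track which unit identity (\(\MonUnitorL[\IntV]^{-1}(\MonUnit[\IntV]) = \MonUnitorR[\IntV]^{-1}(\MonUnit[\IntV])\)) is invoked each time \(\MonUnit[\IntV]\) is split as \(\MonUnit[\IntV] \MonProd[\IntV] \MonUnit[\IntV]\).

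It then remains to verify associativity and unitarity of vertical composition. For associativity, \((\gamma \Comp \beta) \Comp \alpha = \gamma \Comp (\beta \Comp \alpha)\), both sides unfold to the threefold tensor \(\MonUnit[\IntV] \MonProd[\IntV] \MonUnit[\IntV] \MonProd[\IntV] \MonUnit[\IntV]\) acted on by \(\gamma(x) \MonProd[\IntV] \beta(x) \MonProd[\IntV] \alpha(x)\) followed by two applications of \(\Comp[\Y]\); the two bracketings coincide by \Cref{eq:int_enr_cat_axiom_associativity} once the two splittings of \(\MonUnit[\IntV]\) are identified via the unit coherence, with the mediating associator absorbed using the triangle axiom. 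For unitarity, \(\Id[G] \Comp \alpha = \alpha = \alpha \Comp \Id[F]\), one expands \(\Id[G] = \Id[\Y] G_0\) and \(\Id[F] = \Id[\Y] F_0\) and applies the left and right unit laws \Cref{eq:int_enr_cat_axiom_left_unit_law,eq:int_enr_cat_axiom_right_unit_law} of \(\Y\) to collapse each composite back to \(\alpha\). Combining these three facts gives the category \(\Cat[\E][\IntV](\X, \Y)\) as claimed.
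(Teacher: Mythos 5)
Your proposal is correct and follows essentially the same route as the paper, which simply records that the verification is a routine exercise in the internal language: the data for vertical composition and identities have already been given, and the checks (naturality of the composite, associativity, unitarity) are the direct internalizations of the standard enriched-category diagram chases using the coherence of \(\IntV\). Your extra detail on pasting the naturality squares of \(\alpha\) and \(\beta\) along \(G_1(x_0,x_1)\) and invoking \Cref{eq:int_enr_cat_axiom_associativity} together with the unitor coherence is exactly the content the paper leaves implicit.
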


Moreover, horizontal and vertical composition of \(\IntV\)-enriched natural transformations strictly satisfy the interchange laws, thus yielding an enrichment in \(\Cat\).
Equivalently, internal enriched categories, functors and natural transformations form a 2-category, as stated in the following result, whose proof is again an exercise in the internal language.

\begin{proposition}
	\(\IntV\)-enriched categories, functors, and natural transformations form a strict 2-category \(\Cat[\E][\IntV]\).
\end{proposition}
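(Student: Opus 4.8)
The plan is to verify the axioms of a \emph{strict $2$-category} for $\Cat[\E][\IntV]$, in the economical guise of a category enriched over $\Cat$. Almost all of the data is already at hand: the objects are the $\IntV$-enriched categories; for each pair $\X, \Y$ the hom-category is $\Cat[\E][\IntV](\X, \Y)$, which is a category by the preceding proposition; the identity $1$-cell at $\X$ is $\Id[\Cat[\E][\IntV]](\X)$, regarded as an object of $\Cat[\E][\IntV](\X, \X)$, so that its identity $2$-cell is merely the identity morphism of that object; and composition of $1$-cells is the composition of $\IntV$-functors, which is strictly associative and unital by the proposition setting up the underlying $1$-category $\Cat[\E][\IntV]$. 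What remains is threefold: (i) to define the composition functor $\Cat[\E][\IntV](\Y, \Z) \times \Cat[\E][\IntV](\X, \Y) \to \Cat[\E][\IntV](\X, \Z)$ on $2$-cells, i.e.\ the horizontal composite $\gamma \HComp \alpha$ of $\alpha \colon F \to G \colon \X \to \Y$ with $\gamma \colon R \to S \colon \Y \to \Z$; (ii) to check that this assignment is functorial in the pair $(\gamma, \alpha)$ --- which amounts exactly to the middle-four interchange law together with the fact that whiskering by an identity $\IntV$-functor acts as the identity on $2$-cells; and (iii) to check that horizontal composition is strictly associative and strictly unital, with respect to the identity $\IntV$-functors, at the level of $2$-cells.

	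For (i) I would set $\gamma \HComp \alpha \DefEq (S \HComp \alpha) \Comp (\gamma \HComp F) \colon R F \to S G$ using the whiskerings already introduced, and record that it agrees with $(\gamma \HComp G) \Comp (R \HComp \alpha)$. This agreement, and all the identities in (ii) and (iii), unwinds --- through the conventions on the internal language of $\E$ --- to the commutativity of a finite diagram in $\E$ built from the hom-, composition- and identity-arrows of the $\IntV$-categories and $\IntV$-functors at hand and from the structural isomorphisms $\MonAssoc[\IntV]$, $\MonUnitorL[\IntV]$, $\MonUnitorR[\IntV]$ of $\IntV$. Each of these diagrams is exactly the one appearing in the classical verification that, for an ordinary monoidal category $\ExtV$ (not assumed symmetric), $\ExtV$-enriched categories, functors and natural transformations form a strict $2$-category; it is precisely the symmetry-free shape of the naturality axiom \cref{eq:int_enr_cat_axiom_naturality} that makes this available without a symmetry on $\IntV$. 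Each diagram is then discharged by a routine chase in the internal language, invoking the enriched-category axioms \cref{eq:int_enr_cat_axiom_associativity,eq:int_enr_cat_axiom_left_unit_law,eq:int_enr_cat_axiom_right_unit_law}, the enriched-functor axioms \cref{eq:int_enr_cat_axiom_functoriality,eq:int_enr_cat_axiom_functoriality-id}, the naturality axiom \cref{eq:int_enr_cat_axiom_naturality}, the naturality of the coherence isomorphisms of $\IntV$, and the triangle and pentagon coherences of $\IntV$. Since every axiom of a strict $2$-category, and every axiom used in checking it, is an equation between arrows, and since the classical verification for $\ExtV$-enriched categories proceeds purely by equational reasoning from these data, the argument transcribes line by line into the internal language of $\E$; no hypothesis on $\E$ beyond the existence of finite limits is needed.

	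The only genuinely laborious part is the bookkeeping in (ii) and (iii): the interchange law and, above all, the pentagon-shaped associativity diagram for horizontal composition, where the structural isomorphisms of $\IntV$ proliferate and the chase has to be organized so that the associator and unitors of $\IntV$ absorb them --- in effect Mac Lane's coherence argument, replayed equationally inside $\E$. There is, however, no conceptual obstacle: once the whiskerings and the horizontal composite are written out in the internal language, every identity to be proved is literally its classical counterpart for $\ExtV\text{-}\Cat$, and its proof carries over unchanged. I would therefore state the result as holding ``by an exercise in the internal language'', at most spelling out the interchange diagram as a representative case.
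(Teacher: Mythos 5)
Your proposal is correct and matches the paper's approach: the paper defines vertical composition, the two whiskerings, and identity 2-cells explicitly, and then dismisses the verification of the hom-categories, interchange, and the strict 2-category axioms as a routine exercise in the internal language, exactly the internalized replay of the classical $\ExtV$-$\CatOp$ argument you describe. Your extra observations (defining the horizontal composite from the whiskerings and noting that no symmetry on $\IntV$ is needed) are consistent with, and merely elaborate on, what the paper leaves implicit.
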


By abuse of notation, we call \(\Cat[\E][\IntV]\) both the category of \(\IntV\)-enriched categories and their functors, and the 2-category of \(\IntV\)-enriched categories, their functors and their natural transformations.
As a consequence, given two \(\IntV\)-enriched categories \(\X\) and \(\Y\), we will denote by \(\Cat[\E][\IntV](\X, \Y)\) both the hom-set of \(\IntV\)-enriched functors \(\X \to \Y\) and the hom-category of \(\IntV\)-enriched functors \(\X \to \Y\) and their natural transformations.
Context will usually suffice to determine in which sense the notation is being used.

\begin{remark}
	Let \(\X\) be a \(\IntV\)-enriched category.
	There is an underlying \(\E\)-category \(U(\X)\), such that \({U(\X)}_0 \DefEq X\) and \({U(\X)}_1\) is the subobject of \(X \Prod X \Prod V_1\) given, in context \((x_0, x_1, f) \colon {U(\X)}_1\), by
	\[
		f \colon \MonUnit[\IntV] \to_{\IntV} \Hom[\X](x_0, x_1)
	\]
	with the first and second projections as source and target.
	The composition is defined, in context \((x_1, x_2, g), (x_0, x_1, f) \colon {U(\X)}_1 \Pullback[\Source][\Target] {U(\X)}_1\), as follows.
	\begin{equation*}
			(x_1, x_2, g) \Comp[U(\X)] (x_0, x_1, f) \DefEq \big( x_0, x_2, \Comp[\X](x_0, x_1, x_2) \Comp[\IntV] (g \MonProd[\IntV] f) \big)
	\end{equation*}
	
	Let \(F \colon \X \to \Y\) be a \(\IntV\)-enriched functor.
	There is an underlying functor \(U(F) \colon U(\X) \to U(\Y)\) in \(\E\), with \({U(F)}_0\) defined as \(F_0\) and \({U(F)}_1 (x_0, x_1, f)\), in context \((x_0, x_1, f) \colon {U(\X)}_1\), as the tuple
	\begin{equation*}
			\left( F_0(x_0), F_0(x_1), \MonUnit[\IntV] \xrightarrow{f} \Hom[\X](x_0, x_1) \xrightarrow{F_1(x_0, x_1)} \Hom[\X] \big( F_0(x_0), F_0(x_1) \big) \right)
	\end{equation*}
	in \({U(\Y)}_1\).
	
	Let \(\alpha \colon F \to G \colon \X \to \Y\) be a \(\IntV\)-enriched natural transformation.
	There is an underlying natural transformation \(U(\alpha) \colon U(F) \to U(G) \colon U(\X) \to U(\Y)\) in \(\E\), defined, in context \(x \colon {U(\X)}_0\) as \(U(\alpha)(x) \DefEq \big( F_0(x), G_0(x), \alpha(x) \big)\).

	Those data yield the underlying-category-in-\(\E\) 2-functor \(U \colon \Cat[\E][\IntV] \to \Cat[\E]\).
\end{remark}

We now consider the issue of the change of base.
In this context, though, there are two sensible such notions, one coming from internal category theory and one from enriched category theory.
Indeed, we can change both the ambient category and the enriching category.

To begin, let's state the internal version of the standard result, changing the enriching category.

\begin{proposition}
	Let \(\IntV'\) be another monoidal category in \(\E\), and \(F \colon \IntV \to \IntV'\) a monoidal functor.
	Then there is an induced 2-functor \(F_\bullet \colon \Cat[\E][\IntV] \to \Cat[\E][\IntV']\).
\end{proposition}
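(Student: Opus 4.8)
The plan is to define \(F_\bullet\) separately on \(\IntV\)-categories, on \(\IntV\)-functors, and on \(\IntV\)-natural transformations, in each case keeping the underlying object (resp.\ object component) unchanged, post-composing the hom-data with the object part \(F_0\) of the internal functor underlying \(F\), and using the coherence isomorphisms \(\epsilon\) and \(\mu\) of \(F\) to repair the composition and identity data; the axiom checks are then internalizations of the classical verification that a monoidal functor induces a change-of-enrichment 2-functor, carried out in the internal language of \(\E\). Only the lax direction of \(\epsilon \colon \MonUnit[\IntV'] \to F_0 \MonUnit[\IntV]\) and of \(\mu \colon F_0(\MathDash) \MonProd[\IntV'] F_0(\MathDash) \to F_0(\MathDash \MonProd[\IntV] \MathDash)\) is used.

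First I would define \(F_\bullet\) on a \(\IntV\)-category \(\X\): the underlying object is \(X\), the internal hom is \(\Hom[F_\bullet\X] \DefEq F_0 \circ \Hom[\X] \colon X \Prod X \to V_0'\), the composition is, in context \(x_0, x_1, x_2 \colon X\), the composite in \(\IntV'\)
\[
	F_0 \Hom[\X](x_1, x_2) \MonProd[\IntV'] F_0 \Hom[\X](x_0, x_1) \xrightarrow{\mu} F_0\big( \Hom[\X](x_1, x_2) \MonProd[\IntV] \Hom[\X](x_0, x_1) \big) \xrightarrow{F_1 \big( {\Comp[\X]}(x_0, x_1, x_2) \big)} F_0 \Hom[\X](x_0, x_2),
\]
and the identity is, in context \(x \colon X\), the composite \(\MonUnit[\IntV'] \xrightarrow{\epsilon} F_0 \MonUnit[\IntV] \xrightarrow{F_1 \Id[\X](x)} F_0 \Hom[\X](x, x)\). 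The associativity axiom \cref{eq:int_enr_cat_axiom_associativity} for \(F_\bullet\X\) is then obtained by assembling the \(F_1\)-image of the associativity axiom of \(\X\), the naturality squares of \(\mu\) against the two composition morphisms of \(\X\) occurring in that axiom, and the coherence hexagon of \(F\) relating \(\mu\) to \(\MonAssoc[\IntV]\) and \(\MonAssoc[\IntV']\); the unit axioms \cref{eq:int_enr_cat_axiom_left_unit_law,eq:int_enr_cat_axiom_right_unit_law} follow from the unit axioms of \(\X\), the naturality of \(\mu\), and the two unit triangles of \(F\).

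Next I would define \(F_\bullet\) on a \(\IntV\)-functor \(G \colon \X \to \Y\): the object component is \(G_0\) and the morphism component is \(F_1 \circ G_1 \colon X \Prod X \to V_1'\). Axiom \cref{eq:int_enr_cat_axiom_functoriality} for \(F_\bullet G\) follows from \cref{eq:int_enr_cat_axiom_functoriality} for \(G\), pushed through \(F_1\) (which preserves composition in \(\IntV'\)), together with naturality of \(\mu\); axiom \cref{eq:int_enr_cat_axiom_functoriality-id} follows from \cref{eq:int_enr_cat_axiom_functoriality-id} for \(G\) and the internal-functor identity \(F_1 \Id[\IntV] = \Id[\IntV'] F_0\). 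On a \(\IntV\)-natural transformation \(\alpha \colon G \to H \colon \X \to \Y\), I would set \((F_\bullet\alpha)(x) \DefEq F_1\big(\alpha(x)\big) \Comp[\IntV'] \epsilon \colon \MonUnit[\IntV'] \to_{\IntV'} F_0 \Hom[\Y](G_0 x, H_0 x)\) in context \(x \colon X\); the naturality axiom \cref{eq:int_enr_cat_axiom_naturality} for \(F_\bullet\alpha\) follows by pasting the \(F_1\)-image of \cref{eq:int_enr_cat_axiom_naturality} for \(\alpha\) with the naturality of \(\mu\) and the unit triangles of \(F\).

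Finally I would verify 2-functoriality. Strict preservation of identity \(\IntV\)-functors, of composites of \(\IntV\)-functors, and of identity \(\IntV\)-natural transformations is immediate, since the object component is unchanged and \(F_1\) strictly preserves the composition and identities of \(\IntV'\); preservation of vertical composition of \(\IntV\)-natural transformations and of left and right whiskering each require a short diagram chase, using \(\mu\) evaluated at \(\MonUnit[\IntV]\) together with the unit triangles of \(F\) in the vertical-composition case. The main obstacle is purely combinatorial: \cref{eq:int_enr_cat_axiom_associativity} for \(F_\bullet\X\) is the largest diagram to build, and one must arrange the coherence hexagon of \(F\), two naturality squares of \(\mu\), and the \(F_1\)-image of \cref{eq:int_enr_cat_axiom_associativity} for \(\X\) so that they assemble into the required diagram; once that template is set up, every remaining check is a smaller instance of the same bookkeeping and is mechanical in the internal language of \(\E\).
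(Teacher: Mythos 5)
Your proposal is correct and follows essentially the same route as the paper: define \(F_\bullet\) by keeping underlying objects and object components fixed and post-composing the hom, composition, identity, functor and transformation data with \(F\), then verify the axioms in the internal language. The only difference is that you make explicit the insertions of the coherence isomorphisms \(\epsilon\) and \(\mu\) (and the resulting coherence/naturality diagram chases), which the paper's terse proof leaves implicit in its definitions of \(\Comp[F_\bullet(\X)]\), \(\Id[F_\bullet(\X)]\) and \(F_\bullet(\alpha)\) — a welcome extra precision rather than a different argument.
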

\begin{proof}
	Let \(\X\) be a \(\IntV\)-category.
	Define a \(\IntV'\)-category \(F_\bullet(\X)\) on \(X\) given by the following data.
	\begin{description}
		\item[Internal hom:] \(\Hom[F_\bullet(\X)] \DefEq X \Prod X \xrightarrow{\Hom[\X]} V_0 \xrightarrow{F_0} V'_0\).
		\item[Composition:] \(\Comp[F_\bullet(\X)] \DefEq X \Prod X \Prod X \xrightarrow{\Comp[\X]} V_1 \xrightarrow{F_1} V'_1\).
		\item[Identity:] \(\Id[F_\bullet(\X)] \DefEq X \xrightarrow{\Hom[\X]} V_1 \xrightarrow{F_1} V'_1\).
	\end{description}
	
	Let \(G \colon \X \to \Y\) be a \(\IntV\)-functor.
	Define a \(\IntV'\)-functor \(F_\bullet(G) \colon F_\bullet(\X) \to F_\bullet(\Y)\), with the same object component as \(G\) and arrow component given by \({(F_\bullet(G))}_1 \DefEq F_1 G_1\).

	Let \(\alpha \colon G \to G' \colon \X \to \Y\) be a \(\IntV\)-natural transformation.
	Define a \(\IntV'\)-natural transformation \(F_\bullet(\alpha) \colon F_\bullet(G) \to F_\bullet(G') \colon F_\bullet(\X) \to F_\bullet(\Y)\) as \(F_\bullet(\alpha) \DefEq F_1 \alpha\).

	The axioms for the above definitions hold because of the functoriality of \(F\).
\end{proof}

Finally, let's check that changing the ambient category induces a 2-functorial operation on internal enriched categories, just as it does on internal categories (see \Cref{prop:int-base-change}).

\begin{proposition}
	Let \(\E'\) be another finitely complete category and \(F \colon \E \to \E'\) a functor preserving finite limits.
	By \Cref{prop:int-mon-base-change}, there is an induced monoidal category \(F_\bullet (\IntV)\) in \(\E'\).
	Then \(F\) induces a 2-functor \(F_\bullet \colon \Cat[\E][\IntV] \to \Cat[\E'][F_{\bullet}(\IntV)]\).
\end{proposition}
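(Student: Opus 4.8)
The plan is to transport every piece of structure along $F$. Since $F$ preserves finite limits, for all objects $A, B$ of $\E$ the canonical comparison $F(A \Prod B) \to FA \Prod FB$ is an isomorphism, and $F(\Terminal[\E]) \Iso \Terminal[\E']$; I will suppress these canonical isomorphisms throughout, exactly as is done in \Cref{prop:int-base-change,prop:int-mon-base-change}. Recall also that, by \Cref{prop:int-mon-base-change}, the induced monoidal category $F_\bullet(\IntV)$ in $\E'$ has underlying internal category with object-of-objects $F V_0$ and object-of-morphisms $F V_1$, monoidal product $F(\MonProd[\IntV])$ and unit $F(\MonUnit[\IntV])$ (transported along the comparison isomorphisms above), and associator and unitors obtained by applying $F$ to those of $\IntV$.

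First I would define $F_\bullet$ on objects. Given a $\IntV$-category $\X$ with underlying object $X$, internal hom $\Hom[\X] \colon X \Prod X \to V_0$, composition $\Comp[\X] \colon X \Prod X \Prod X \to V_1$ and identity $\Id[\X] \colon X \to V_1$, set $F_\bullet(\X)$ to have underlying object $FX$, internal hom $F(\Hom[\X]) \colon FX \Prod FX \Iso F(X \Prod X) \to F V_0$, composition the analogous transport of $F(\Comp[\X])$, and identity $F(\Id[\X]) \colon FX \to F V_1$. The enriched-category axioms \Cref{eq:int_enr_cat_axiom_associativity,eq:int_enr_cat_axiom_left_unit_law,eq:int_enr_cat_axiom_right_unit_law} are commuting diagrams in $\E$ whose vertices and edges are built solely from finite products and the structure maps of $\X$ and of $\IntV$; applying the finite-product-preserving functor $F$ sends them to the corresponding commuting diagrams witnessing that $F_\bullet(\X)$ is an $F_\bullet(\IntV)$-category. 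Equivalently, the axioms are formulas of cartesian logic valid in $\E$, and $F$, preserving finite limits, preserves their validity. This is the conceptual core of the argument; the rest is bookkeeping.

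Next I would extend $F_\bullet$ to $1$- and $2$-cells by the same recipe: a $\IntV$-functor $G \colon \X \to \Y$ is sent to the $F_\bullet(\IntV)$-functor with object component $F G_0 \colon FX \to FY$ and morphism component $F G_1 \colon FX \Prod FX \to F V_1$, and a $\IntV$-natural transformation $\alpha \colon G \to G'$ is sent to $F\alpha \colon FX \to F V_1$; the source/target conditions hold because $F$ preserves the finite-limit data defining $\to_{\IntV}$, and axioms \Cref{eq:int_enr_cat_axiom_functoriality,eq:int_enr_cat_axiom_functoriality-id,eq:int_enr_cat_axiom_naturality} transfer as before. Finally, $2$-functoriality — compatibility with composition and identities of enriched functors, and with vertical composition, identities, and left/right whiskerings of enriched natural transformations — follows from $F$ being a functor (so it preserves the composites and identities of the underlying $\E$-morphisms out of which these operations are assembled) together with preservation of the relevant finite-limit data; since $\Cat[\E][\IntV]$ is a \emph{strict} $2$-category there are no coherence isomorphisms to chase.

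The main obstacle is not any isolated hard step but the disciplined handling of the comparison isomorphisms $F(A \Prod B) \Iso FA \Prod FB$: one must check, for instance, that $F$ applied to $\Comp[\X]$ genuinely factors, after transport, through the product $F V_1 \Prod F V_1$ used to state the axioms for $F_\bullet(\X)$, and that the monoidal product appearing there is precisely $F(\MonProd[\IntV])$ transported the same way. Once the bookkeeping convention of suppressing these canonical isomorphisms is fixed — in the same manner as in \Cref{prop:int-base-change,prop:int-mon-base-change} — the verification collapses to the single observation that a finite-limit-preserving functor carries the interpretation of cartesian logic in $\E$ to that in $\E'$, and the proof is routine.
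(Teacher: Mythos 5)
Your proposal is correct and follows essentially the same route as the paper: define $F_\bullet$ by applying $F$ to the structural arrows $\Hom[\X]$, $\Comp[\X]$, $\Id[\X]$ (and likewise on enriched functors and natural transformations), and observe that the axioms transfer because $F$ preserves the finite-limit (cartesian) logic in which internal enrichment is formulated. Your extra care with the comparison isomorphisms $F(A \Prod B) \Iso FA \Prod FB$ is just a more explicit account of the bookkeeping the paper leaves implicit.
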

\begin{proof}
	Let \(\X\) be a \(\IntV\)-category.
	Define a \(F_\bullet (\IntV)\)-category \(F_\bullet(\X)\) on \(F(X)\) by applying the functor \(F\) to the structural arrows \(\Hom[\X]\), \(\Comp[\X]\) and \(\Id[\X]\) of \(\X\).
	That gives a \(F_\bullet (\IntV)\)-enriched category because \(F\) preserves finite-limit logic, in terms of which internal enriched categories are defined.
	Analogously, define \(F_\bullet\) on \(\IntV\)-enriched functors and natural transformations.
\end{proof}

In the remaining \Cref{sec:indexed-enriched-cats,sec:enriched-gen-multicats} of the paper we shall perform some sanity checks on the new notion of internal enrichment by comparing it to other analogous notions from the literature.
By doing so, we hope to persuade the reader that the theory we propose is sound and that it deserves a place among the other generalized notions of enrichment.

\section{Indexed enriched categories}\label{sec:indexed-enriched-cats}

In \Cref{sec:indexed-cats} we recalled the notion of indexed monoidal category.
Unsurprisingly, there is a notion of enrichment over such a category \autocite{shulman2013enriched} which is a fibrational generalization of the standard enrichment.
This comes in two versions: a general indexed version and a version which Shulman calls ``small''.
The latter is in a sense a hybrid notion, having an internal as well as an indexed aspect.
We shall then compare both of them to internal enrichment, and find that they are closely related.

First, we give an outline of the notions of small \(\IdxW\)-category, of functors between such categories, and of natural transformations between such functors.
For brevity we will omit some diagrammatic axioms, referring to \textcite{shulman2013enriched} for those.
In this section, let \(\IdxW\) be an \(\E\)-indexed monoidal category.
Moreover, if \(f \colon B \to A\) is a morphism in \(\E\) and \(H\) is an object in \(\IdxW(A)\), we shall write \(H(f)\) as a convenient notation for the object \(\IdxW(f)(H)\) of \(\IdxW(B)\).

\begin{definition}[small \(\IdxW\)-category]
	A \Def{small \(\IdxW\)-category} \(\A\) consists of the following data:
	\begin{itemize}
		\item an object \(A\) of \(\E\).
		\item an object \(\Hom[\A]\) of \(\IdxW(A \Prod A)\).
		\item a morphism \(\Id[\A] \colon \MonUnit[\IdxW(A)] \to \Hom[\A](\Delta)\) where \(\Delta \colon A \to A \Prod A\) is the diagonal.
		\item A morphism of \(\IdxW(A \Prod A \Prod A)\)
			\[
				\Comp[\A] \colon \Hom[\A](\pi_2, \pi_3) \MonProd[\IdxW(A \Prod A \Prod A)] \Hom[\A](\pi_1, \pi_2) \to \Hom[\A](\pi_1, \pi_2)
			\]
			where \(\pi_1, \pi_2, \pi_3 \colon A \Prod A \Prod A \to A\) are projections.
	\end{itemize}
	Moreover, it has to satisfy the associativity and unitarity axioms \autocite{shulman2013enriched}.
\end{definition}
	
\begin{definition}[functor of small \(\IdxW\)-categories]
	A \Def{functor of small \(\IdxW\)-categories} \(F \colon \A \to \B\) consists of the following data:
	\begin{itemize}
		\item a morphism \(F_0 \colon A \to B\) of \(\E\).
		\item a morphism \(F_1 \colon \Hom[\A] \to \Hom[\B](F_0, F_0)\) of \(\IdxW(A \Prod A)\).
	\end{itemize}
	Moreover, it has to satisfy the functoriality axioms \autocite{shulman2013enriched}.
\end{definition}
	
\begin{definition}[natural transformation of small \(\IdxW\)-categories]
	A \Def{natural transformation of small \(\IdxW\)-categories} \(\alpha \colon F \to G \colon \A \to \B\) consists of a morphism
	\[
		\alpha \colon \MonUnit[\IdxW(A)] \to \Hom[\B] \big( (F_0, G_0) \Delta \big)
	\]
	satisfying the naturality axiom \autocite{shulman2013enriched}.
\end{definition}

We shall denote with \(\ICat[\E][\IdxW]\) the (2-)category of small \(\IdxW\)-categories and their functors (and the natural transformation between those).

Recall from \Cref{sec:ext-int-cats} that the externalization of an internal monoidal category \(\IntV\) is a monoidal indexed category \(\Externalization{\IntV}\) over \(\E\).
Thus, we can investigate the relationship that subsists between \(\IntV\)-enriched categories and small \(\Externalization{\IntV}\)-categories,
and it turns out that they are the same thing in a very strict sense: their definitions coincide!

\begin{proposition}
	To give a \(\IntV\)-enriched category (functor, natural transformation) is to give a small \(\Externalization{\IntV}\)-category (functor, natural transformation).
	Thus, the categories \(\Cat[\E][\IntV]\) and \(\ICat[\E][\Externalization{\IntV}]\) are isomorphic.
\end{proposition}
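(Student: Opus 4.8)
The plan is to establish the isomorphism on the nose: after unwinding the definition of a small $\Externalization{\IntV}$-category with the help of the explicit description of the fibers of $\Externalization{\IntV}$ recalled in \Cref{sec:ext-int-cats}, one finds that it consists of precisely the same data, constrained by precisely the same axioms, as a $\IntV$-enriched category. The asserted isomorphism of (2-)categories $\Cat[\E][\IntV] \cong \ICat[\E][\Externalization{\IntV}]$ is then the identity on underlying data, and the only thing to verify is this coincidence of data and axioms.

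First I would set up the dictionary. In the fiber $\Externalization{\IntV}[X]$ an object is an arrow $x \colon X \to V_0$; a morphism $x_0 \to x_1$ is a section of the projection $(x_0, x_1)^* V_1 \to X$, equivalently an arrow $f \colon X \to V_1$ with $(\Source[\IntV], \Target[\IntV]) f = (x_0, x_1)$, i.e.\ $f \colon x_0 \to_{\IntV} x_1$ in the internal language; the monoidal structure (product, unit, associator, unitors) of $\Externalization{\IntV}[X]$ is computed pointwise from that of $\IntV$ by \Cref{prop:index-mon-cat}, and composition of morphisms is given pointwise by $\Comp[\IntV]$; and, crucially, reindexing along an arrow $u$ of $\E$ is just precomposition with $u$, on objects and on morphisms alike. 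In particular, since the reindexing functors of $\Externalization{\IntV}$ strictly preserve the monoidal structure, the reindexing coherence constraints that the general axioms for a small $\IdxW$-category allow for are here all identities.

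Next I would match the data of a small $\Externalization{\IntV}$-category $\A$ with those of a $\IntV$-enriched category $\X$ item by item. The object $A$ of $\E$ is the underlying object $X$. The object $\Hom[\A]$ of the fiber over $A \Prod A$ is precisely an arrow $A \Prod A \to V_0$, namely $\Hom[\X]$. By ``reindexing $=$ precomposition'', $\Hom[\A](\Delta)$ unwinds to $x \mapsto \Hom[\X](x, x)$ and the fiberwise unit over $A$ to the constant family at $\MonUnit[\IntV]$, so the identity $\Id[\A] \colon \MonUnit[\Externalization{\IntV}(A)] \to \Hom[\A](\Delta)$ is exactly an arrow $A \to V_1$ of type $\MonUnit[\IntV] \to_{\IntV} \Hom[\X](x, x)$, namely $\Id[\X]$; likewise $\Hom[\A]$ reindexed along the projections $A \Prod A \Prod A \to A$ unwinds to $(x_0, x_1, x_2) \mapsto \Hom[\X](x_i, x_j)$ and the fiberwise product to pointwise $\MonProd[\IntV]$, so $\Comp[\A]$ is exactly an arrow $A \Prod A \Prod A \to V_1$ of type $\Hom[\X](x_1, x_2) \MonProd[\IntV] \Hom[\X](x_0, x_1) \to_{\IntV} \Hom[\X](x_0, x_2)$, namely $\Comp[\X]$. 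I would then check that, transported along this dictionary, Shulman's associativity and unitarity axioms \autocite{shulman2013enriched} become verbatim \Cref{eq:int_enr_cat_axiom_associativity,eq:int_enr_cat_axiom_left_unit_law,eq:int_enr_cat_axiom_right_unit_law} --- here the identification of the fiberwise associator and unitors with the pointwise images of those of $\IntV$, and of fiberwise composition with pointwise $\Comp[\IntV]$, is what does the work. The same routine applied to functors (object part $F_0 \colon A \to B$; the fiber morphism $F_1 \colon \Hom[\A] \to \Hom[\B](F_0, F_0)$ is exactly an arrow $A \Prod A \to V_1$ of the type required of $F_1$, and the axioms become \Cref{eq:int_enr_cat_axiom_functoriality,eq:int_enr_cat_axiom_functoriality-id}) and to natural transformations (the morphism $\alpha \colon \MonUnit[\Externalization{\IntV}(A)] \to \Hom[\B]((F_0, G_0)\Delta)$ is exactly an arrow $A \to V_1$ of type $\MonUnit[\IntV] \to_{\IntV} \Hom[\Y](F_0(x), G_0(x))$, and the axiom becomes \Cref{eq:int_enr_cat_axiom_naturality}) completes the correspondence of data and axioms.

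Finally I would note that this item-by-item identification is strictly compatible with the algebraic structure on both sides: composition of functors of small $\Externalization{\IntV}$-categories, and of their natural transformations, is defined fiberwise, hence pointwise, which is exactly the formula defining composition of $\IntV$-functors and of $\IntV$-natural transformations; the same holds for identities, for vertical composition, and for whiskering. Hence the datum-level bijections assemble into a 2-isomorphism $\Cat[\E][\IntV] \cong \ICat[\E][\Externalization{\IntV}]$, as claimed. The step I expect to be the main obstacle is the bookkeeping of the third paragraph: carefully unwinding Shulman's reindexings along the diagonals and projections and confirming that every component of the fiberwise monoidal and compositional structure of $\Externalization{\IntV}$ is the pointwise image of the corresponding component of $\IntV$, so that the (here suppressed) diagrammatic axioms for small $\IdxW$-categories land exactly on the displayed internal ones. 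Since reindexing in $\Externalization{\IntV}$ is literally precomposition and the fiber structure is pointwise throughout, this is a lengthy but mechanical diagram translation rather than a genuine difficulty.
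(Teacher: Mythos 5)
Your proposal is correct and follows essentially the same route as the paper: the paper's proof likewise unwinds the data of a small \(\Externalization{\IntV}\)-category (underlying object, hom as an arrow \(X \Prod X \to V_0\), identity as a section over the diagonal, composition over \((\pi_1,\pi_3)\)) and observes that these, together with their axioms, are literally the data and axioms of an internal \(\IntV\)-enriched category, treating functors and natural transformations analogously. Your version merely spells out more of the bookkeeping (reindexing as precomposition, pointwise monoidal structure, compatibility with composition and whiskering) that the paper leaves as a routine check.
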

\begin{proof}
	A small \(\Externalization{\IntV}\)-category \(\X\) is given by the following data:
	\begin{itemize}
		\item an object \(X\) of \(\E\).
		\item an object \(\Hom[\X] \colon X \times X \to V_0\) of \(\Externalization{\IntV}[X \Prod X]\).
		\item a morphism of \(\Externalization{\IntV}\)
			\[
				(X \xrightarrow{!_X} \Terminal[\E] \xrightarrow{\MonUnit[\IntV]} V_0)
					\xrightarrow{\big( X \xrightarrow{\Delta_X} X \Prod X, X \xrightarrow{\Id[\X]} ( \MonUnit[\IntV] !_X , \Hom[\X] \Delta_X )^{*}V_1 \big)}
				(X \Prod X \xrightarrow{\Hom[\X]} V_0).
			\]
		\item a morphism of \(\Externalization{\IntV}\)
			\[
				(X \Prod X \Prod X \xrightarrow{\Hom[\X](\pi_2, \pi_3) \MonProd[\IntV] \Hom[\X](\pi_1, \pi_2)} V_0)
					\to
				(X \Prod X \xrightarrow{\Hom[\X]} V_0)
			\]
			over \(X \Prod X \Prod X \xrightarrow{(\pi_1, \pi_3)} X \Prod X\), given by
			\[
				X \Prod X \Prod X \xrightarrow{\Comp[\X]} {\big( \Hom[\X](\pi_2, \pi_3) \MonProd[\IntV] \Hom[\X](\pi_1, \pi_2), \Hom[\X](\pi_1, \pi_3) \big)}^{*}V_1.
			\]
	\end{itemize}
	Moreover, such data have to satisfy associativity and unitarity axioms.
	But these are precisely the same data that yield an internal \(\IntV\)-enriched category.
	
	Analogously, to give a functor or a natural transformation of small \(\Externalization{\IntV}\)-categories is to give a functor or a natural transformation of internal \(\IntV\)-enriched categories.
\end{proof}

Now we present the notion of indexed category enriched in an indexed monoidal category \autocite{shulman2013enriched}.
For that, we shall extend a notation that we have consistently used in the internal context to standard enriched categories: if \(F \colon \ExtV \to \ExtV'\) is a lax monoidal functor and \(\ExtA\) is a \(\ExtV\)-enriched category, then \(F_{\bullet}(\ExtA)\) is the induced \(\ExtV'\)-enriched category.

\begin{definition}[indexed \(\IdxW\)-category]
	An \Def{indexed \(\IdxW\)-category} \(\B\) consists of the following data:
	\begin{itemize}
		\item for each \(X\) object of \(\E\), a \(\IdxW(X)\)-category \(\B^{X}\).
		\item for each \(f \colon X \to Y\) in \(\E\), a fully faithful \(\IdxW(X)\)-functor \(f^{*} \colon {(f^*)}_{\bullet}( \B^{Y} )~\to~\B^{X}\).
		\item for each \(f \colon X \to Y\) and \(g \colon Y \to Z\) in \(\E\), a \(\IdxW(X)\)-natural isomorphism \({( g f )}^{*} \Iso f \Comp {(f^{*})}_{\bullet}(g)\) (where we implicitly identify \({(f^{*})}_{\bullet} {(g^{*})}_{\bullet} \B^{Z}\) with \({(gf^{*})}_{\bullet} \B^{Z}\) in the domains of these functors).
		\item for each \(X\) object of \(\E\), a \(\IdxW(X)\)-natural isomorphism \({(\Id[X])}^{*} \Iso \Id[\B^{X}]\).
	\end{itemize}
	Moreover, for every \(f \colon X \to Y\), \(g \colon Y \to Z\) and \(h \colon Z \to K\) in \(\E\), it has to satisfy the axioms for associativity and unitarity, analogous to those for ordinary indexed categories, by making the following diagrams of isomorphisms commute.
	\begin{gather*}
		\begin{tikzcd}[ampersand replacement = \&]
			{(hgf)}^*
					\ar[d] \ar[r]
				\&f^* \Comp {(f^*)}_\bullet \big( {(hg)}^* \big)
					\ar[d] \\
			{(gf)}^* \Comp {\big( {(gf)}^* \big)}_\bullet (h^*)
					\ar[d]
				\&f^* \Comp (f^*)_\bullet \big( g^* \Comp {(g^*)}_\bullet(h^*) \big)
					\ar[d] \\
			f^* \Comp {(f^*)}_\bullet (g^*) \Comp {\big( {(gf)}^* \big)}_\bullet (h^*)
					\ar[r]
				\&f^* \Comp {(f^*)}_\bullet (g^*) \Comp {(f^*)}_\bullet {(g^*)}_\bullet (h^*)
		\end{tikzcd} \\
		\begin{tikzcd}[column sep = 0, ampersand replacement = \&]
			\&{(\Id[\X])}^* \Comp {\big({(\Id[\X])}^* \big)}_\bullet (f^*)
					\ar[dr] \\
			{(f \Id[\X])}^*
					\ar[ur] \ar[rr, equal]
			\&\&f^*
		\end{tikzcd}
		\begin{tikzcd}[column sep = 0, ampersand replacement = \&]
			\&f^* \Comp {(f^*)}_\bullet \big( {(\Id[\Y])}^* \big)
					\ar[dr] \\
			{(\Id[\Y] f)}^*
					\ar[ur]
					\ar[rr, equal]
			\&\&f^*
		\end{tikzcd}
	\end{gather*}
\end{definition}
		
\begin{definition}[functor of indexed \(\IdxW\)-categories]
	An \Def{indexed \(\IdxW\)-functor} \(\mathcal{F} \colon \B \to \B'\) consists, for every object \(X\) of \(\E\), of a \(\IdxW(X)\)-enriched functor \(\mathcal{F}^{X} \colon \B^{X} \to {\B'}^{X}\) together with, for every \(f \colon X \to Y\), an isomorphism \(\mathcal{F}^{X} \Comp f^{*} \Iso f^{*} \Comp {( f^{*} )}_{\bullet} ( \mathcal{F}^{Y} )\).
	Such data have to satisfy the functoriality axioms by making the following diagrams of isomorphisms commute, for every \(f \colon X \to Y\) and \(g \colon Y \to Z\) in \(\E\).
	\begin{gather*}
		\begin{tikzcd}[ampersand replacement = \&]
			\mathcal{F}^I \Comp (gf)^*
					\ar[r] \ar[d]
				\&\mathcal{F}^X \Comp f^* \Comp (f^*)_\bullet(g^*)
					\ar[d] \\
			(gf)^{*} \Comp {\big( (gf)^{*} \big)}_{\bullet} ( \mathcal{F}^{Z} )
					\ar[d]
				\&f^{*} \Comp {( f^{*} )}_{\bullet} ( \mathcal{F}^{Y} ) \Comp (f^*)_\bullet(g^*)
					\ar[d] \\
			f^* \Comp (f^*)_\bullet (g^*) \Comp (f^*)_\bullet \big( (g^*)_\bullet (\mathcal{F}^Z) \big)
					\ar[dr, bend right=10]
				\&f^{*} \Comp {( f^{*} )}_{\bullet} ( \mathcal{F}^{Y} \Comp g^* )
					\ar[d] \\
			\&f^{*} \Comp {( f^{*} )}_{\bullet} \big( g^{*} \Comp {( g^{*} )}_{\bullet} (\mathcal{F}^Z) \big)
		\end{tikzcd} \\
		\begin{tikzcd}[ampersand replacement = \&]
			\mathcal{F}^X \Comp (\Id[\X])^*
					\ar[r] \ar[dr]
				\&(\Id[\X])^* \Comp ((\Id[\X])^*)_\bullet (\mathcal{F}^X)
					\ar[d] \\
			\&\mathcal{F}^X
		\end{tikzcd}
	\end{gather*}
\end{definition}

\begin{definition}[natural transformation of indexed \(\IdxW\)-categories]
	An indexed \(\IdxW\)-natural transformation \(\alpha \colon \mathcal{F} \to \mathcal{G} \colon \B \to \B'\) consists, for every object \(X\) of \(\E\), of a \(\IdxW(X)\)-natural transformation \(\alpha^X \colon \mathcal{F}^X \to \mathcal{G}^X \colon \B^X \to {\B'}^X\), satisfying naturality axioms by making the following diagram commute, for every \(f \colon X \to Y\).
	\[
		\begin{tikzcd}
			\mathcal{F}^{X} \Comp f^{*}
					\ar[d] \ar[r]
				&f^{*} \Comp {( f^{*} )}_{\bullet} ( \mathcal{F}^{Y} )
					\ar[d] \\
			\mathcal{F}^{X} \Comp f^{*}
					\ar[r]
				&f^{*} \Comp {( f^{*} )}_{\bullet} ( \mathcal{F}^{Y} )
		\end{tikzcd}
	\]
\end{definition}

With the data thus defined (plus the obvious notions of compositions and identities) we can define a 2-category of indexed enriched categories.

\begin{definition}[category of indexed \(\IdxW\)-categories]
	We denote with \(\ICat[\E][\IdxW]\) the 2-category of indexed \(\IdxW\)-categories, their functors and the natural transformations between them.
\end{definition}

By abuse of notation, we shall denote with \(\ICat[\E][\IdxW]\) also the mere 1-category of indexed \(\IdxW\)-categories and their functors.
Usually, the context is sufficient to distinguish when the notation is being used referring to the 1-category or the 2-category.

The relationship between \(\IntV\)-enriched categories and indexed \(\Externalization{\IntV}\)-categories is more complicated.
We will prove that \(\IntV\)-enriched categories are a sub-case of indexed \(\Externalization{\IntV}\)-categories, in the sense precisely stated in \Cref{prop:V-cat-index-cat,prop:V-cat-index-cat-fullsubcat}.

First, let \(\X\) be a \(\IntV\)-enriched category and let us define an indexed \(\Externalization{\IntV}\)-category \(\Externalization{\X}\).
Given an indexing object \(I\) of \(\E\), define the \(\Externalization{\IntV}[I]\)-enriched category \(\Externalization{\X}[I]\) as follows:
\begin{description}
	\item[Objects:] \(I\)-indexed families \(x \colon I \to X\) of elements of \(X\).
	\item[Internal hom:] \(\Hom[\Externalization{\X}[I]](x_0 \colon I \to X, x_1 \colon I \to X) \DefEq I \xrightarrow{(x_0, x_1)} X \Prod X \xrightarrow{\Hom[\X]} V_0\).
	\item[Composition:] \(\Comp[\Externalization{\X}[I]](x_0, x_1, x_2) \DefEq I \xrightarrow{(x_0, x_1, x_1)} X \Prod X \Prod X \xrightarrow{\Comp[\X]} V_1\).
	\item[Identity:] \(\Id[\Externalization{\X}[I]](x) \DefEq I \xrightarrow{x} X \xrightarrow{\Id[\X]} V_1\).
\end{description}
Let \(f \colon I \to J\) be a re-indexing.
Define the \(\Externalization{\IntV}[I]\)-functor \(f^* \colon {(f^*)}_{\bullet} (\Externalization{\X}[J]) \to \Externalization{\X}[I]\) as follows.
\begin{align*}
	f^*_0 ( J \xrightarrow{x} X )
		&\DefEq I \xrightarrow{f} J \xrightarrow{x} X \\
	f^*_1 ( J \xrightarrow{x_0} X, J \xrightarrow{x_1} X )
		&\DefEq \Hom[{(f^*)}_{\bullet} (\Externalization{\X}[J])](x_0, x_1) \xrightarrow{\Id} \Hom[\Externalization{\X}[I]]( x_0 f, x_1 f )
\end{align*}
Since \(f^*_1(x_0, x_1)\) is the identity of \(\Hom[\X](x_0 f, x_1 f)\) as an object of \(\Externalization{\IntV}[I]\), then \(f^*\) is full and faithful, as required by the definition.
The rest of the structure is given by canonical isomorphisms verifying the axioms.

Secondly, let \(F \colon \X \to \Y\) be a \(\IntV\)-enriched functor and let us define an indexed \(\Externalization{\IntV}\)-enriched functor \(\Externalization{F} \colon \Externalization{\X} \to \Externalization{\Y}\) induced by \(F\).
For an indexing object \(I\), define the \(\Externalization{\IntV}[I]\)-enriched functor \(\Externalization{F}[I] \colon \Externalization{\X}[I] \to \Externalization{\Y}[I]\) as follows:
\begin{description}
	\item[Objects component:] \(\Externalization{F}[I](I \xrightarrow{x} X) \DefEq I \xrightarrow{x} X \xrightarrow{F_0} Y\).
	\item[Morphisms component:] \(\Externalization{F}[I](I \xrightarrow{x_0} X, I \xrightarrow{x_1} X) \DefEq I \xrightarrow{(x_0, x_1)} X \Prod X \xrightarrow{F_1} V_1\).
\end{description}
Notice that, for any reindexing \(f \colon I \to J\), we have an equality \(\Externalization{F}[I] \Comp f^* = f^* \Comp (f^*)_\bullet (\Externalization{F}[J])\), meaning that the axioms for indexed \(\Externalization{\IntV}\)-enriched functors are automatically satisfied.

Finally, let \(\alpha \colon F \to G \colon \X \to \Y\) be a \(\IntV\)-enriched natural transformation and let us define an indexed \(\Externalization{\IntV}\)-natural transformation \(\Externalization{\alpha} \colon \Externalization{F} \to \Externalization{G} \colon \Externalization{\X} \to \Externalization{\Y}\) induced by \(\alpha\).
Let \(I\) be an indexing object and define the \(\Externalization{\IntV}[I]\)-enriched natural transformation \(\Externalization{\alpha}[I] \colon \Externalization{F}[I] \to \Externalization{G}[I] \colon \Externalization{\X}[I] \to \Externalization{\Y}[I]\) as \(\Externalization{\alpha}[I](x \colon I \to X) \DefEq \alpha x\).
The naturality condition for indexed \(\Externalization{\IntV}\)-natural transformations is trivially satisfied because the defining isomorphisms of indexed \(\Externalization{\IntV}\)-functors \(\Externalization{F}\) and \(\Externalization{G}\) are identities.

With the data previously defined, we have the following proposition.

\begin{proposition}
	\label{prop:V-cat-index-cat}
	There is a 2-functor \(\Externalization{\MathDash}\) from \(\Cat[\E][\IntV]\) to the 2-category of indexed \(\Externalization{\IntV}\)-categories \(\ICat[\E][\Externalization{\IntV}]\).
\end{proposition}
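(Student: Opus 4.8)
The plan is to notice that all the data of the prospective 2-functor \(\Externalization{\MathDash}\) has already been written down above the statement: for a \(\IntV\)-enriched category \(\X\) we have the fibers \(\Externalization{\X}[I]\) together with the reindexing functors \(f^*\), for a \(\IntV\)-functor \(F\) the components \(\Externalization{F}[I]\), and for a \(\IntV\)-natural transformation \(\alpha\) the components \(\Externalization{\alpha}[I]\). So the work consists entirely in checking that these assignments are well-defined (that is, land in \(\ICat[\E][\Externalization{\IntV}]\)) and are 2-functorial, and, as with the rest of this section, this is a routine verification. I would carry it out in four steps.

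First I would verify that each \(\Externalization{\X}[I]\) really is an \(\Externalization{\IntV}[I]\)-enriched category: its associativity and unit laws follow from \Cref{eq:int_enr_cat_axiom_associativity,eq:int_enr_cat_axiom_left_unit_law,eq:int_enr_cat_axiom_right_unit_law} for \(\X\) by precomposing those identities of arrows of \(\E\) with the appropriate tuplings \(I \to X \Prod \dots \Prod X\) of the \(x_i\). Secondly, I would check that the reindexing functors assemble into an indexed structure: since the morphism-component of \(f^*\) is an identity --- its value on a hom-object being the identity of \(\Hom[\X](x_0 f, x_1 f)\) regarded inside \(\Externalization{\IntV}[I]\) --- it is fully faithful as the definition requires, and all the structural isomorphisms \({(gf)}^* \Iso f^* \Comp {(f^*)}_\bullet(g^*)\) and, similarly, \({(\Id[I])}^* \Iso \Id\) may be taken to be the canonical pullback-pasting isomorphisms of \Cref{sec:ext-int-cats} --- in fact identities, \(\Externalization{\IntV}\) being a strict indexed monoidal category --- so that the coherence pentagon and triangles in the definition of indexed \(\Externalization{\IntV}\)-category commute automatically.

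Thirdly, I would confirm that \(\Externalization{F}\) is an indexed \(\Externalization{\IntV}\)-functor and \(\Externalization{\alpha}\) an indexed \(\Externalization{\IntV}\)-natural transformation; here the decisive observation, already recorded just above the statement, is that \(\Externalization{F}[I] \Comp f^* = f^* \Comp {(f^*)}_\bullet(\Externalization{F}[J])\) holds strictly, so the comparison isomorphism of \(\Externalization{F}\) can be chosen to be the identity, its functoriality coherence diagrams degenerate, and the naturality square for \(\Externalization{\alpha}\) commutes for the same reason. Finally, for 2-functoriality I would simply unwind the definitions of \Cref{sec:int-enriched-cats} fiber-wise: \(\Externalization{GF}[I]\) has the same object- and morphism-components as \(\Externalization{G}[I] \Comp \Externalization{F}[I]\), the externalization of the identity \(\IntV\)-functor on \(\X\) is the identity indexed functor on \(\Externalization{\X}\), and the analogous strict equalities hold for vertical composition, identities, and left and right whiskering of \(\IntV\)-natural transformations.

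I do not expect a genuine obstacle: the whole argument is bookkeeping. The step demanding the most care is setting up the coherence axioms in the definition of indexed \(\Externalization{\IntV}\)-category and confirming that, with the structural isomorphisms chosen as above, they are indeed satisfied --- essentially one must keep track of which pullback-pasting isomorphism is which. Once that is in place, everything else follows mechanically from the composition formulae recalled earlier in this section.
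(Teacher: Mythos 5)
Your proposal matches the paper's treatment: the paper gives no separate proof beyond the constructions of \(\Externalization{\X}\), \(\Externalization{F}\), and \(\Externalization{\alpha}\) laid out immediately before the statement, relying on exactly the observations you cite (identity morphism-components making the reindexings fully faithful, the strict equality \(\Externalization{F}[I] \Comp f^* = f^* \Comp (f^*)_\bullet(\Externalization{F}[J])\), and routine fiber-wise checks). Your fleshing out of the coherence and 2-functoriality verifications is the same bookkeeping the paper leaves implicit, so the approach is essentially identical.
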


The previous result is extremely weak.
Indeed, we would like to better understand the 2-functor \(\Externalization{\MathDash}\).

Observe that there is a construction inducing \(\IntV\)-enriched functors from indexed \(\Externalization{\IntV}\)-functors.
Let \(\mathcal{F} \colon \Externalization{\X} \to \Externalization{\Y}\) be a \(\Externalization{\IntV}\)-functor.
Define the \(\IntV\)-functor \(\bar{\mathcal{F}} \colon \X \to \Y\) as
\begin{align*}
	\bar{\mathcal{F}}_0 &\DefEq {(\mathcal{F}^X)}_0(\Id(X)) \colon X \to Y \\
	\bar{\mathcal{F}}_1 &\DefEq \phi \Comp[\Y] {(\mathcal{F}^{X \Prod X})}_1(\pi_1, \pi_2) \colon X \Prod X \to V_1.
\end{align*}
The isomorphism \(\phi\) appearing in the definition of the morphism component requires some explanation.
The source and target of \({(\mathcal{F}^{X \Prod X})}_1(\pi_1, \pi_2)\) are, respectively, \({(\mathcal{F}^{X \Prod X})}_0(\pi_1)\) and \({(\mathcal{F}^{X \Prod X})}_0(\pi_2)\), while we need an arrow from \({(\mathcal{F}^{X})}_0(\Id(X)) \pi_1\) to \({(\mathcal{F}^{X})}_0(\Id(X)) \pi_2\) to match the definition of \(\bar{\mathcal{F}}\) on objects.
We fix this issue by introducing a suitable isomorphism.
By the definition of \(\Externalization{\IntV}\)-functor, we have an isomorphism
\[
	\mathcal{F}^{X \Prod X} \Comp {\pi_i}^* \Iso {\pi_i}^* \Comp {({\pi_i}^*)}_\bullet (\mathcal{F}^X) \colon {({\pi_i}^*)}_\bullet ( \Externalization{\X}[X] ) \to \Externalization{\X}[X \Prod X]
\]
which we apply to the object \(\Id(X)\) of \({({\pi_i}^*)}_\bullet ( \Externalization{\X}[X] )\) to get an isomorphism
\[
	\phi_i \colon {(\mathcal{F}^{X \Prod X})}_0(\pi_i) \Iso {(\mathcal{F}^{X})}_0(\Id(X)) \pi_i
\]
in \(\Externalization{\X}[X \Prod X]\).
From \(\phi_1\) and \(\phi_2\) we get the isomorphism \(\phi\) that we need.

Moreover, there is also a construction inducing \(\IntV\)-enriched natural transformations from indexed \(\Externalization{\IntV}\)-natural transformations.
Let \(\alpha \colon \mathcal{F} \to \mathcal{G} \colon \Externalization{\X} \to \Externalization{\Y}\) be an indexed \(\Externalization{\IntV}\)-natural transformation.
Define the \(\IntV\)-enriched natural transformation \(\bar{\alpha} \colon \bar{\mathcal{F}} \to \bar{\mathcal{G}} \colon \X \to \Y\) as
\(\bar{\alpha} \DefEq \alpha^X(\Id(X)) \colon X \to V_1\).

It is clear that, for a \(\IntV\)-enriched functor \(F \colon \X \to \Y\), we have \(F = \overline{\Externalization{F}}\), and for a \(\IntV\)-enriched natural transformation \(\alpha \colon F \to G \colon \X \to \Y\), we have \(\alpha = \overline{\Externalization{\alpha}}\).
This provides a strengthening of \Cref{prop:V-cat-index-cat}, in that it shows that \(\Cat[\E][\IntV]\) is a sub-2-category of \(\ICat[\E][\Externalization{\IntV}]\).
Moreover, as proved in the following proposition, \(\Cat[\E][\IntV]\) is a full sub-2-category, meaning that there is an equivalence of hom-categories.

\begin{proposition}
	\label{prop:V-cat-index-cat-fullsubcat}
The 2-category \(\Cat[\E][\IntV]\) is a full sub-2-category of \(\ICat[\E][\Externalization{\IntV}]\), and \(\Externalization{\MathDash}\) is the relative inclusion.
\end{proposition}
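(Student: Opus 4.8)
The plan is to show that, for every pair $\X, \Y$ of $\IntV$-enriched categories, the functor between hom-categories induced by $\Externalization{\MathDash}$,
\[
	\Cat[\E][\IntV](\X, \Y) \longrightarrow \ICat[\E][\Externalization{\IntV}](\Externalization{\X}, \Externalization{\Y}),
\]
is an equivalence of categories, which is exactly the sense of ``full sub-2-category'' relevant here. Much is already available: the assignments $\mathcal{F} \mapsto \bar{\mathcal{F}}$ and $\alpha \mapsto \bar{\alpha}$ together with the identities $\overline{\Externalization{F}} = F$ and $\overline{\Externalization{\alpha}} = \alpha$ recorded above show that this hom-category functor is injective on objects and faithful. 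So it suffices to prove two statements: \emph{(i)} for every indexed $\Externalization{\IntV}$-functor $\mathcal{F} \colon \Externalization{\X} \to \Externalization{\Y}$ there is an indexed $\Externalization{\IntV}$-natural isomorphism $\Externalization{\bar{\mathcal{F}}} \Iso \mathcal{F}$ (essential surjectivity on the objects of the hom-category); and \emph{(ii)} for every indexed $\Externalization{\IntV}$-natural transformation $\theta \colon \Externalization{F} \to \Externalization{G}$ one has $\Externalization{\bar{\theta}} = \theta$ (fullness on morphisms). Together with faithfulness, these give the required equivalence.

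Statement \emph{(ii)} is a direct unwinding. By definition $\bar{\theta} = \theta^{X}(\Id(X)) \colon X \to V_{1}$ and $\big(\Externalization{\bar{\theta}}[I]\big)(x) = \bar{\theta} \circ x$ for $x \colon I \to X$ an object of $\Externalization{\X}[I]$. Applying the naturality axiom for the indexed $\Externalization{\IntV}$-natural transformation $\theta$ to the reindexing $x$ and evaluating the resulting commuting square at the object $\Id(X)$ of $\Externalization{\X}[X]$: since the reindexing functor $x^{*}$ sends $\Id(X)$ to $x$ and acts on morphisms of the fibers as (a restriction of) precomposition with $x$, and since the comparison isomorphisms of $\Externalization{F}$ and $\Externalization{G}$ are identities, the square collapses to the equality $\theta^{I}(x) = \theta^{X}(\Id(X)) \circ x = \big(\Externalization{\bar{\theta}}[I]\big)(x)$. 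The compatibility of $\Externalization{\bar{\theta}}$ and $\theta$ on the morphism components of the fibers is checked the same way, and \emph{(ii)} follows.

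The real content is \emph{(i)}. For each indexing object $I$ one must exhibit an isomorphism of $\Externalization{\IntV}[I]$-enriched functors $\psi_{I} \colon \Externalization{\bar{\mathcal{F}}}[I] \Iso \mathcal{F}^{I}$, natural in $I$. Its component at an object $x \colon I \to X$ of $\Externalization{\X}[I]$ is produced exactly as the isomorphism $\phi$ was in the construction of $\bar{\mathcal{F}}_{1}$: writing $x = x^{*}(\Id(X))$ and applying the functor-comparison isomorphism $\mathcal{F}^{I} \Comp x^{*} \Iso x^{*} \Comp {(x^{*})}_{\bullet}(\mathcal{F}^{X})$ at the object $\Id(X)$, one lands at ${(\mathcal{F}^{X})}_{0}(\Id(X)) \circ x = \bar{\mathcal{F}}_{0}(x)$, which is precisely the value of $\Externalization{\bar{\mathcal{F}}}[I]$ on objects. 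That the family $\{\psi_{I}(x)\}_{x}$ is $\Externalization{\IntV}[I]$-enriched natural in $x$ follows from the enriched naturality of the comparison isomorphisms of $\mathcal{F}$ together with the fact that $\bar{\mathcal{F}}_{1}$ was defined precisely by transporting ${(\mathcal{F}^{X \Prod X})}_{1}(\pi_{1}, \pi_{2})$ along these same isomorphisms (so that at $I = X \Prod X$ and the objects $\pi_{1}, \pi_{2}$ the component $\psi_{X \Prod X}$ recovers the isomorphisms $\phi_{1}, \phi_{2}$). Finally one verifies that $\psi \DefEq \{\psi_{I}\}_{I}$ is an indexed $\Externalization{\IntV}$-natural isomorphism, i.e.\ compatible with every reindexing $g \colon J \to I$: using $(x g)^{*} = g^{*} \Comp {(g^{*})}_{\bullet}(x^{*})$ on objects, this reduces to the associativity (cocycle) coherence axiom for the indexed $\Externalization{\IntV}$-functor $\mathcal{F}$.

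I expect this last coherence check — tracing the comparison isomorphisms of $\mathcal{F}$ through composites of reindexings and matching them against the triangle- and pentagon-type axioms in the definitions of indexed $\Externalization{\IntV}$-functor and natural transformation — to be the main obstacle; everything else is a routine diagram chase in the internal language of $\E$, of the same flavour as the verifications already carried out for $\Externalization{\MathDash}$ and for $\bar{\MathDash}$. That $\Externalization{\MathDash}$ is in addition injective on objects up to isomorphism, so that ``full sub-2-category'' and ``relative inclusion'' are literally justified, follows by recovering $X$ as the object representing the presheaf $I \mapsto \mathrm{ob}\,\Externalization{\X}[I]$ and then $\Hom[\X]$, $\Comp[\X]$, $\Id[\X]$ from the enriched structure of the fibers of $\Externalization{\X}$ evaluated on the distinguished objects given by identities and projections.
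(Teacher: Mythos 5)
Your proposal is correct and follows essentially the same route as the paper: the heart in both cases is constructing the isomorphism between \(\mathcal{F}\) and \(\Externalization{\bar{\mathcal{F}}}\) by applying the comparison isomorphisms of \(\mathcal{F}\) at the object \(\Id(X)\), then checking fiberwise enriched naturality and compatibility with reindexing via the functoriality (cocycle) axiom, and handling 2-cells through the naturality axiom evaluated at \(\Id(X)\). Your repackaging as faithfulness plus fullness plus essential surjectivity of the hom-functor, with fullness specialized to \(\theta \colon \Externalization{F} \to \Externalization{G}\) using that the comparison isomorphisms of externalized functors are identities, is just a mild reorganization of the paper's verification of the naturality square for general \(\mathcal{F}\), \(\mathcal{G}\), \(\alpha\).
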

\begin{proof}
	Consider indexed \(\Externalization{\IntV}\)-categories \(\Externalization{\X}\) and \(\Externalization{\Y}\).
	We need to show that there is an equivalence of categories
	\[
		\Cat[\IntV](\X, \Y) \equiv \Cat[\Externalization{\IntV}](\Externalization{\X}, \Externalization{\Y}).
	\]	
	
	Let \(\mathcal{F} \colon \Externalization{\X} \to \Externalization{\Y}\) be a \(\Externalization{\IntV}\)-functor, and consider the indexed \(\Externalization{\IntV}\)-functor \(\Externalization{\bar{\mathcal{F}}} \colon \Externalization{\X} \to \Externalization{\Y}\).
	We need to prove that there is a \(\Externalization{\IntV}\)-natural isomorphism \(\mathcal{F} \Iso \Externalization{\bar{\mathcal{F}}}\).
	Let \(I\) be an indexing object in \(\E\).
	Then we need a natural isomorphism \(\mathcal{F}^I \Iso \Externalization{\bar{\mathcal{F}}}[I]\).
	Let \(x \colon I \to X\) be an object of \(\Externalization{\X}[I]\).
	By the definition of \(\Externalization{\IntV}\)-functor, we have an isomorphism
	\[
		\mathcal{F}^I \Comp x^* \Iso x^* \Comp {(x^*)}_\bullet (\mathcal{F}^X) \colon {(x^*)}_\bullet ( \Externalization{\X}[X] ) \to \Externalization{\X}[I]
	\]
	which we apply to the object \(\Id(X)\) of \({(x^*)}_\bullet ( \Externalization{\X}[X] )\) to get an isomorphism
	\[
		\mathcal{F}^I_0(x) \Iso_x \mathcal{F}^X_0(\Id(X)) \Comp x
	\]
	in \(\Externalization{\X}[I]\).
	Then, we take that as the definition of the isomorphism \(\mathcal{F}^I \Iso \Externalization{\bar{\mathcal{F}}}[I]\) on \(x \colon I \to X\).
	
	We need to prove that the isomorphism just defined is natural.
	Let \(I\) be an indexing object in \(\E\) and \(x_1, x_2 \colon I \to X\) objects of \(\Externalization{\X}[I]\).
	By the definition of \(\Externalization{\IntV}\)-functor, we have an isomorphism
	\[
		\mathcal{F}^I \Comp {(x_1, x_2)}^* \Iso {(x_1, x_2)}^* \Comp {({(x_1, x_2)}^*)}_\bullet (\mathcal{F}^{X \Prod X}) \colon {({(x_1, x_2)}^*)}_\bullet ( \Externalization{\X}[X \Prod X] ) \to \Externalization{\X}[I]
	\]
	which we apply to the objects \(\pi_i\) of \({({(x_1, x_2)}^*)}_\bullet ( \Externalization{\X}[X \Prod X] )\) to get an isomorphism
	\[
		\mathcal{F}^I_0(x_i) \Iso \mathcal{F}^{X \Prod X}_0(\pi_i) \Comp {(x_1, x_2)}
	\]
	in \(\Externalization{\X}[I]\).
	Then, consider the following diagram.
	\[
		\begin{tikzcd}
			\mathcal{F}^I_0(x_1)
					\ar[r, "{\Iso}"]
					\ar[d, "{\mathcal{F}^I_1(x_1, x_2)}"']
				&\mathcal{F}^{X \Prod X}_0(\pi_1) \Comp {(x_1, x_2)}
					\ar[r, "{\Iso}"]
					\ar[d, "{\mathcal{F}^{X \Prod X}_1(\pi_1, \pi_2) \Comp {(x_1, x_2)}}"]
				&\mathcal{F}^{X}_0(\Id(X)) \Comp x_1
					\ar[d, "{\Externalization{\bar{\mathcal{F}}}[I]_1(x_1, x_2)}"] \\
			\mathcal{F}^I_0(x_2)
					\ar[r, "{\Iso}"]
				&\mathcal{F}^{X \Prod X}_0(\pi_2) \Comp {(x_1, x_2)}
					\ar[r, "{\Iso}"]
				&\mathcal{F}^{X}_0(\Id(X)) \Comp x_1
		\end{tikzcd}
	\]
	Firstly, the left-hand-side square commutes because of the naturality of the isomorphism.
	Secondly, the right-hand-side square commutes because such is the definition of \(\Externalization{\bar{\mathcal{F}}}\).
	Finally, the composition of the consecutive isomorphisms \(\mathcal{F}^I_0(x_i) \to \mathcal{F}^{X}_0(\Id(X)) \Comp x_i\) is the isomorphism \(\mathcal{F}^I \Iso \Externalization{\bar{\mathcal{F}}}[I]\) computed on \(x_i\), because of the functoriality axiom for \(\Externalization{\IntV}\)-functors applied to the functor \(\mathcal{F}\) and the composition \(\pi_i \Comp (x_1, x_2) = x_i\).
	But then the outer square is the naturality diagram, and we have shown that it commutes.
	
	Then, we need to prove that the isomorphism \(\mathcal{F} \Iso \Externalization{\bar{\mathcal{F}}}\) satisfies the naturality condition for \(\Externalization{\IntV}\)-natural transformations.
	Let \(f \colon I \to J\) be a reindexing and \(x \colon J \to X\) an object of \(\Externalization{\X}\).
	Then the naturality diagram for the reindexing \(f\) and computed on \(x\) is
	\[
		\begin{tikzcd}
			\mathcal{F}^I_0(j \Comp f)
					\ar[dr, "{\Iso_{x \Comp f}}"']
					\ar[rr, "{\Iso_{f}}"]
				&&\mathcal{F}^I_0(x) \Comp f
					\ar[dl, "{\Iso_{x} \Comp f}"] \\
			&\Externalization{\bar{\mathcal{F}}}[I]_0(x \Comp f) = \mathcal{F}^X_0(\Id(X)) \Comp x \Comp f
		\end{tikzcd}	
	\]
	which commutes thanks to the functoriality axiom for \(\Externalization{\IntV}\)-functors applied to the functor \(\mathcal{F}\) and the composition of \(f\) and \(x\).

	Finally, we need to prove that, for any indexed \(\Externalization{\IntV}\)-natural transformation \(\alpha \colon \mathcal{F} \to \mathcal{G} \colon \Externalization{\X} \to \Externalization{\Y}\), the following square commute.
	\[
		\begin{tikzcd}
			\mathcal{F}
					\ar[r, "{\alpha}"]
					\ar[d, "{\Iso}"']
				&\mathcal{G}
					\ar[d, "{\Iso}"] \\
			\Externalization{\bar{\mathcal{F}}}
					\ar[r, "{\Externalization{\bar{\alpha}}}"]
				&\Externalization{\bar{\mathcal{G}}}
		\end{tikzcd}	
	\]
	Let \(I\) be an indexing object in \(\E\) and \(x \colon I \to X\) an object of \(\Externalization{\X}[I]\), and compute the \(I\)-th component of the above diagram on \(x\).
	We get a commutative square, as it is an instance of the naturality axiom for the indexed \(\Externalization{\IntV}\)-natural transformation \(\alpha\), relative to the reindexing \(x\) and computed on \(\Id(X)\).	
\end{proof}

\begin{remark}
	The converse of \Cref{prop:V-cat-index-cat-fullsubcat} does not seem to hold, that is, indexed \(\Externalization{\IntV}\)-categories don't canonically induce internal \(\IntV\)-enriched categories.
	In particular, the categories \(\Externalization{\X}[I]\) are small, as their object of objects is the homset \(\E(I, X)\), but that is not generally the case for indexed \(\Externalization{\IntV}\)-categories.
\end{remark}

We then straightforwardly get the following corollary, which says something about enriched indexed categories.
It is indeed generally false that small \(\IdxW\)-categories are indexed \(\IdxW\)-categories too, but that happens to be the case when the enriching indexed category is the externalization of an internal monoidal category.

\begin{corollary}
	The 2-category \(\SCat[\E][\Externalization{\IntV}]\) is a full sub-2-category of \(\ICat[\E][\Externalization{\IntV}]\).
\end{corollary}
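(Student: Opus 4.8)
The plan is to deduce the statement by composing the two comparisons already available, so that no new construction is needed. Recall that the proposition identifying internal \(\IntV\)-enriched categories with small \(\Externalization{\IntV}\)-categories does so on the nose: the defining data, the compositions, and the identities all coincide, so it is really an isomorphism of 2-categories \(\SCat[\E][\Externalization{\IntV}] \Iso \Cat[\E][\IntV]\). On the other hand, \Cref{prop:V-cat-index-cat-fullsubcat} exhibits \(\Cat[\E][\IntV]\) as a full sub-2-category of \(\ICat[\E][\Externalization{\IntV}]\), with inclusion \(\Externalization{\MathDash}\). The canonical 2-functor witnessing the claim of the corollary is then the composite
\[
	\SCat[\E][\Externalization{\IntV}] \Iso \Cat[\E][\IntV] \xrightarrow{\;\Externalization{\MathDash}\;} \ICat[\E][\Externalization{\IntV}],
\]
which, up to the first isomorphism, is exactly the construction sending a small \(\Externalization{\IntV}\)-category with underlying object \(X\) to the indexed \(\Externalization{\IntV}\)-category whose fibre over \(I\) has \(\E(I, X)\) as its objects.

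First I would observe that this composite is injective on objects, on 1-cells, and on 2-cells, since the first factor is an isomorphism of 2-categories and the second is the inclusion of a full sub-2-category; hence it realizes \(\SCat[\E][\Externalization{\IntV}]\) as a sub-2-category of \(\ICat[\E][\Externalization{\IntV}]\). Then I would check fullness at the level of hom-categories: for small \(\Externalization{\IntV}\)-categories \(\X\) and \(\Y\) one has
\[
	\SCat[\E][\Externalization{\IntV}](\X, \Y) \Iso \Cat[\E][\IntV](\X, \Y) \equiv \ICat[\E][\Externalization{\IntV}](\Externalization{\X}, \Externalization{\Y}),
\]
where the isomorphism is an instance of the first comparison and the equivalence is precisely the content of \Cref{prop:V-cat-index-cat-fullsubcat}. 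Thus each hom-category of \(\SCat[\E][\Externalization{\IntV}]\) is equivalent, through the composite, to the corresponding hom-category of \(\ICat[\E][\Externalization{\IntV}]\), which is what fullness of the sub-2-category means.

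I do not expect any genuine obstacle here, which is why the corollary is advertised as straightforward. The only point that deserves to be spelled out is that being a full sub-2-category is a condition on hom-categories stated up to equivalence, and is therefore stable under pre-composition with an isomorphism of 2-categories; so the strict identification \(\SCat[\E][\Externalization{\IntV}] \Iso \Cat[\E][\IntV]\) and the full embedding \(\Externalization{\MathDash}\) of \Cref{prop:V-cat-index-cat-fullsubcat} combine without friction. If one wished, one could also add a sentence explaining why such a comparison fails for a general \(\IdxW\): the objects of a putative fibre \(\B^{I}\) would have to be manufactured out of \(\E(I, A)\), but outside the externalization case there is no reason for the resulting \(\IdxW(I)\)-categories to assemble compatibly into an indexed \(\IdxW\)-category — exactly the failure recorded in the preceding remark.
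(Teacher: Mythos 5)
Your argument is correct and is precisely the route the paper intends: the corollary is left as "straightforward" because it is exactly the composite of the on-the-nose identification \(\SCat[\E][\Externalization{\IntV}] \Iso \Cat[\E][\IntV]\) with the full inclusion \(\Externalization{\MathDash}\) of \Cref{prop:V-cat-index-cat-fullsubcat}, fullness being understood (as the paper stipulates) as equivalence of hom-categories. Your additional observations about injectivity and stability of fullness under precomposition with an isomorphism are fine but add nothing beyond what the paper takes for granted.
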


To conclude, we look at the interplay between externalization and the underlying category of points.

\begin{proposition}
	Let \(\X\) be a \(\IntV\)-enriched category.
	Then, there is a natural isomorphism of indexed categories \(U \big( \Externalization{\X} \big) \Iso \Externalization{U(\X)}\)
	between the underlying indexed category of the indexed \(\Externalization{\IntV}\)-enriched category \(\Externalization{\X}\) and the externalization of the underlying \(\E\)-category \(U(\X)\).
\end{proposition}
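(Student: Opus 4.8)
The plan is to compute both indexed categories fibrewise over a generic indexing object $I$ of $\E$, observe that over each $I$ they agree on the nose on objects and on morphisms, check that composition, identities and reindexing agree as well, and conclude that the resulting fibrewise identification is the desired natural isomorphism. No real idea is involved; the content is in unwinding carefully the two nested constructions --- ``underlying category'' and ``externalization'' --- on each side.

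First I would unwind $\Externalization{U(\X)}$. By the explicit description of externalization in \Cref{sec:ext-int-cats} applied to the internal category $U(\X)$, the fibre $\Externalization{U(\X)}[I]$ has as objects the arrows $x \colon I \to {U(\X)}_0 = X$, and a morphism $x_0 \to x_1$ is a section over $I$ of the pullback ${(x_0, x_1)}^{*}{U(\X)}_1$. Since, by definition, ${U(\X)}_1$ is the subobject of $X \Prod X \Prod V_1$ cut out by the condition $f \colon \MonUnit[\IntV] \to_{\IntV} \Hom[\X](x_0, x_1)$ and its source and target are the first two projections, such a section is precisely a term $f \colon I \to V_1$ with, in context $i \colon I$, $f(i) \colon \MonUnit[\IntV] \to_{\IntV} \Hom[\X]\big(x_0(i), x_1(i)\big)$; composition and identities are those transported from $U(\X)$, and reindexing along $u \colon I' \to I$ is precomposition. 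Then I would unwind $U(\Externalization{\X})$ dually: its fibre over $I$ is the underlying ordinary category of the $\Externalization{\IntV}[I]$-enriched category $\Externalization{\X}[I]$ built before \Cref{prop:V-cat-index-cat}, whose objects are again the families $x \colon I \to X$ and whose morphisms $x_0 \to x_1$ are the morphisms $\MonUnit[\Externalization{\IntV}[I]] \to \Hom[\Externalization{\X}[I]](x_0, x_1)$ of $\Externalization{\IntV}[I]$. Recalling that $\MonUnit[\Externalization{\IntV}[I]]$ is the constant family on $\MonUnit[\IntV]$ and that $\Hom[\Externalization{\X}[I]](x_0, x_1) = \Hom[\X] \circ (x_0, x_1)$, such a morphism is a section over $I$ of the pullback of $(\Source[\IntV], \Target[\IntV]) \colon V_1 \to V_0 \Prod V_0$ along $(\MonUnit[\IntV] \, !_I,\, \Hom[\X] \circ (x_0, x_1))$, that is, once more a term $f \colon I \to V_1$ with $f(i) \colon \MonUnit[\IntV] \to_{\IntV} \Hom[\X]\big(x_0(i), x_1(i)\big)$. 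So the two fibres carry the same objects and the same morphisms.

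Next I would match the remaining structure. For composition, the composite of $f \colon x_0 \to x_1$ and $g \colon x_1 \to x_2$ in $U(\Externalization{\X})[I]$ is assembled, by the definition of the underlying category of an enriched category, from $\Comp[\Externalization{\X}[I]](x_0, x_1, x_2)$, the product $g \MonProd[\Externalization{\IntV}[I]] f$ and the unit coherence of $\Externalization{\IntV}[I]$; since these are all computed point-wise from $\IntV$, it equals $i \mapsto \Comp[\X]\big(x_0(i), x_1(i), x_2(i)\big) \Comp[\IntV] \big(g(i) \MonProd[\IntV] f(i)\big)$, which is precisely the composition of $U(\X)$ recalled earlier, hence that of $\Externalization{U(\X)}[I]$; identities are handled identically. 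For naturality in $I$ I would check that reindexing of $U(\Externalization{\X})$ along $u \colon I' \to I$ is again precomposition by $u$: it is the underlying functor of the $\Externalization{\IntV}[I']$-functor $u^{*} \colon {(u^{*})}_{\bullet}(\Externalization{\X}[I]) \to \Externalization{\X}[I']$, whose morphism component is an identity in the relevant fibre of $\Externalization{\IntV}$ (as noted right after \Cref{prop:V-cat-index-cat}), and for which the change-of-enrichment functor ${(u^{*})}_{\bullet}$ contributes no nontrivial coherence, because the reindexing functors of $\Externalization{\IntV}$ strictly preserve its monoidal structure (see the corresponding remark in \Cref{sec:ext-int-cats}), so in particular the lax unit map $\MonUnit[\Externalization{\IntV}[I']] \to u^{*}\MonUnit[\Externalization{\IntV}[I]]$ is an identity. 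Hence the fibrewise identification assembles into an isomorphism of strict indexed categories, natural in $I$, and visibly natural in $\X$ since every clause is given by the same universal formulas and both $U$ and $\Externalization{\MathDash}$ are functorial.

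The step I expect to be the main obstacle is this last verification: being scrupulous enough about the two nested layers of ``underlying category'' and ``externalization'' to confirm that the underlying indexed category of $\Externalization{\X}$ really has precomposition as its reindexing. This is where the (fortunately trivial) lax-monoidal coherence data of the reindexing functors $u^{*} \colon \Externalization{\IntV}[I] \to \Externalization{\IntV}[I']$ and the canonical structural isomorphisms packaged into $\Externalization{\X}$ enter, and they must be checked to be identities rather than merely coherent isomorphisms.
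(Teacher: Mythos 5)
Your proposal is correct and follows essentially the same route as the paper's proof: a fibrewise comparison over each indexing object \(I\), identifying the objects of both categories with families \(I \to X\) and the morphisms with sections of the pullback of \(V_1\) along \(\big(\MonUnit[\IntV]\,!_I, \Hom[\X](x_0,x_1)\big)\), after which the reindexing squares commute. The only difference is that you spell out the composition, identity, and strict-reindexing checks that the paper dismisses as clear and trivial.
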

\begin{proof}
	Let \(I\) be an indexing object.
	We need to prove that there is an isomorphism \(U \big( \Externalization{\X}[I] \big) \Iso \Externalization{U(\X)}[I]\) between the underlying standard category of the \(\Externalization{\IntV}[I]\)-enriched category \(\Externalization{\X}[I]\) and the fiber over \(I\) of the externalization of the underlying \(\E\)-category \(U(\X)\).
	Moreover, for any reindexing \(f \colon J \to I\) in \(\E\), the square
	\[
		\begin{tikzcd}
			U \big(\Externalization{\X}[I] \big)
				\ar[r, "{\Iso}"]
				\ar[d, "{U \big( f^*(\Externalization{\X}) \big)}"']
			&\Externalization{U(\X)}[I]
				\ar[d, "{f^* \big( \Externalization{U(\X)} \big)}"] \\
			U \big(\Externalization{\X}[J] \big)
				\ar[r, "{\Iso}"]
			&\Externalization{U(\X)}[J]
		\end{tikzcd}
	\]
	has to commute.
	
	For both categories, the objects are \(I\)-indexed families of objects of \(\X\), and the arrows \((I \xrightarrow{x_0} X) \to (I \xrightarrow{x_1} X)\) are the sections of the projection
	\[
		{\big( \MonUnit[\IntV] !_X, \Hom[\X](x_0, x_1) \big)}^* V_1 \to I,
	\]
	so that the categories are clearly isomorphic to each other, and the square commutes trivially.
\end{proof}

The previous result can be extended to the following proposition.

\begin{proposition}
	The following diagram of 2-functors	commutes.
	\[
		\begin{tikzcd}
			\Cat[\E][\IntV]
					\ar[r, "{U}"]
					\ar[d, "{\Externalization{\MathDash}}"']
				&\Cat[\E]
					\ar[d, "{\Externalization{\MathDash}}"] \\
			\ICat[\E][\Externalization{\IntV}]
					\ar[r, "{U}"]
				&\ICat[\E]
		\end{tikzcd}
	\]
\end{proposition}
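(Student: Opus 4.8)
The plan is to check that the two composite \(2\)-functors \(U \circ \Externalization{\MathDash}\) and \(\Externalization{\MathDash} \circ U\) from \(\Cat[\E][\IntV]\) to \(\ICat[\E]\) agree on objects, \(1\)-cells and \(2\)-cells, using the previous proposition at the level of objects and a direct unwinding of definitions for the \(1\)- and \(2\)-cells.

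On objects there is nothing new: for a \(\IntV\)-enriched category \(\X\) the previous proposition already exhibits a natural isomorphism of indexed categories \(U(\Externalization{\X}) \Iso \Externalization{U(\X)}\), and its proof shows that over each indexing object \(I\) the two fibres have literally the same objects and the same morphisms, so that this isomorphism is in fact the identity and the two composites agree strictly on \(\X\).

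For a \(\IntV\)-enriched functor \(F \colon \X \to \Y\) I would compute both composites fibrewise. On one side, \(\Externalization{F}\) acts over \(I\) by \(x \mapsto F_0 x\) on objects and by \((x_0, x_1) \mapsto F_1(x_0, x_1)\) on hom-objects, and then the fibrewise underlying-category functor turns the latter into postcomposition with \(F_1(x_0, x_1)\) on morphisms. On the other side, \(U(F)\) sends an arrow \((x_0, x_1, f)\) of \(U(\X)\) to \(\big(F_0 x_0, F_0 x_1, F_1(x_0, x_1) \Comp[\IntV] f\big)\), and externalizing (as in \Cref{sec:ext-int-cats}) again yields, over \(I\), the indexed functor acting by \(x \mapsto F_0 x\) on objects and by postcomposition with \(F_1(x_0, x_1)\) on morphisms; so the two composites agree on \(F\). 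Similarly, for a \(\IntV\)-natural transformation \(\alpha \colon F \to G\) both composites produce the indexed natural transformation whose \(I\)-component sends a family \(x \colon I \to X\) to \(\alpha x\), which is immediate from \(\Externalization{\alpha}[I](x) = \alpha x\) and \(U(\alpha)(x) = \big(F_0 x, G_0 x, \alpha x\big)\) together with externalization of an internal natural transformation.

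Finally, I would note that, since the comparison isomorphisms packaged into \(\Externalization{F}\) (and \(\Externalization{G}\)) as indexed \(\Externalization{\IntV}\)-functors are identities — as observed in the construction preceding \Cref{prop:V-cat-index-cat} — and since \(U\), \(\Externalization{\MathDash}\), and the compositions and identities in all four \(2\)-categories are strict, the pointwise equalities above assemble into a genuine equality of \(2\)-functors, with no coherence condition left to verify. The only point deserving a little care, and the nearest thing to an obstacle, is exactly this last bookkeeping: checking that no coherence \(2\)-cell sneaks in, so that the square commutes on the nose rather than merely up to isomorphism.
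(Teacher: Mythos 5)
Your proposal is correct and follows exactly the route the paper intends: the paper offers no separate argument, merely stating that the preceding proposition (the fibrewise identification \(U(\Externalization{\X}[I]) \Iso \Externalization{U(\X)}[I]\)) "extends" to the square of 2-functors, and your fibrewise unwinding on \(\IntV\)-functors and \(\IntV\)-natural transformations, plus the observation that the comparison isomorphisms in \(\Externalization{F}\) are identities, is precisely that extension. The only point to phrase carefully is the strictness claim: the paper's previous proposition asserts a (canonical) natural isomorphism rather than literal equality of the fibres, so depending on how pullbacks are chosen the square may be read as commuting up to this canonical isomorphism rather than on the nose, which is the sense in which the paper states it.
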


The above discussion suggests that the indexed \(\Externalization{\IntV}\)-category \(\Externalization{\X}\) should yield a notion of externalization of an internal enriched category \(\X\).
That would be defined as the large \(\Externalization{\IntV}\)-category obtained by \(\Externalization{\X}\) via the functor \(\Theta\) \autocite[Section~6]{shulman2013enriched}, analogously to how one gets the total category of an indexed category via the Grothendieck construction (\Cref{thm:groth-constr}).

\section{Enriched generalized multicategories}\label{sec:enriched-gen-multicats}

In this section we show that the notion of internally enriched category is an occurrence of Leinster's more general notion of enriched generalized multicategory \autocite{leinster1999generalized,Leinster04HigherOperads}.
For that, we shall follow the exposition in \textcite{Leinster04HigherOperads} and adopt its notation.

In addition to the usual assumptions on \(\E\), we need to assume that the forgetful functor \(U \colon \Cat[\E] \to \Graph[\E]\) has a left adjoint, the free-category-on-a-graph functor \(\FreeCat \colon \Graph[\E] \to \Cat[\E]\).
In particular, that is true if \(\E\) has countable products and finite coproducts.
The adjunction \((\FreeCat, U, \Unit, \Counit)\) is monadic, and it induces the free-category-over-a-graph monad \((\FreeCat = U\FreeCat, \FreeCatUnit, \FreeCatMult = U \Counit \FreeCat)\) over \(\Graph[\E]\) \autocite[theorem {6.5.2}]{Leinster04HigherOperads}.

\textcite{Leinster04HigherOperads} presents a general notion of \(T\)-multicategory, for \(T\) a cartesian monad, but in our case we are only interested in \(\FreeCat\)-multicategories.
For simplicity's sake, then, we shall state the specialized definition rather then the general one.

\begin{definition}[generalized multicategory, {\autocite[definition {4.2.2}]{Leinster04HigherOperads}}]
	A \Def{\(\FreeCat\)-graph} \(\MC{C}\) is a diagram
	\[
		\begin{tikzcd}
			&\IntC_1
				\ar[dl, "{\MCSource[\MC{C}]}"']
				\ar[dr, "{\MCTarget[\MC{C}]}"] \\
			\FreeCat \IntC_0
			& & \IntC_0
		\end{tikzcd}
	\]
	in \(\Graph[\E]\) (so, sort to say, a diagram of diagrams).
	A \(\FreeCat\)-multicategory \(\MC{C}\) consists of a \(\FreeCat\)-graph together with arrows
	\begin{gather*}
		\MCComp[\MC{C}] \colon \IntC_1 \Pullback[][\FreeCat \IntC_0] \FreeCat \IntC_1 \to \IntC_1
	\shortintertext{and}
		\MCId[\MC{C}] \colon \IntC_0 \to \IntC_1
	\end{gather*}
	satisfying associativity and unitarity axioms.
	More abstractly, a \(\FreeCat\)-multicategory is a monad in the category of \(\FreeCat\)-algebras.

A \Def{functor of \(\FreeCat\)-categories} \(F \colon \MC{C} \to \MC{D}\) is given by a morphism of the underlying graphs, that is, a commutative diagram
\begin{equation*}
	\label{eqn:funct-fc-cat}
	\begin{tikzcd}[column sep = small]
		&\IntC_1
			\ar[dl, "{\MCSource[\MC{C}]}"']
			\ar[dr, "{\MCTarget[\MC{C}]}"]
			\ar[dd, "{F_1}"] \\
		\FreeCat \IntC_0
			\ar[dd, "{\FreeCat F_0}"']
		& & \IntD_0
			\ar[dd, "{F_0}"]  \\
		&\IntD_1
			\ar[dl, "{\MCSource[\MC{D}]}"']
			\ar[dr, "{\MCTarget[\MC{D}]}"] \\
		\FreeCat \IntD_0
		& & \IntD_0
	\end{tikzcd}
\end{equation*}
in \(\Graph[\E]\),
satisfying both the identity and composition functoriality axioms.
\end{definition}

The objects of \(\E\) have a canonical \(\FreeCat\)-multicategory structure.
Indeed, if \(X\) is an object of \(\E\), the (underlying graph of the) indiscrete internal category \(\Ind(X)\) in \(\E\) has an \(\FreeCat\)-algebra structure
\[
	\Comp[\Ind(X)] = U \Counit_{\Ind(X)} \colon \FreeCat\Ind(X) \to \Ind(X)
\]
given by the counit of the adjunction.
Then there is a (unique) \(\FreeCat\)-multicategory structure \(\IndP[X]\) \autocite[example {4.2.22}]{Leinster04HigherOperads} given by a \(\FreeCat\)-graph
\[
	\begin{tikzcd}[column sep = small, ampersand replacement=\&]
		\& \FreeCat\Ind(X)
			\ar[dl, "{\MCSource[\IndP[X]] = \Id(\FreeCat\Ind(X))}"']
			\ar[dr, "{\MCTarget[\IndP[X]] = \Comp[\Ind(X)]}"] \\
		\FreeCat\Ind(X)
			\& \& \Ind(X)
	\end{tikzcd}
\]
and operations
\begin{gather*}
	\MCComp[\IndP[X]] = \FreeCat\Ind(X) \Pullback[][\FreeCat\Ind(X)] \FreeCat\FreeCat\Ind(X)
		\xrightarrow{\pi_2}
	\FreeCat\FreeCat\Ind(X)
		\xrightarrow{\FreeCatMult[\Ind(X)]} \FreeCat\Ind(X) \\
	\MCId[\IndP[X]] = \Ind(X) \xrightarrow{\FreeCatUnit[\Ind(X)]} \FreeCat\Ind(X)
\end{gather*}
satisfying associativity and unitarity.
Intuitively, the composition of \(\IndP[X]\) collapses a list of lists of elements of \(X\) into a single list:
\[
	(x_0, \dots, x_n) \MCComp[\IndP[X]] \big( (x_0, \dots, x_1), \dots, (x_{n-1}, \dots, x_n) \big) = (x_0, \dots, x_1, \dots, x_{n-1}, \dots, x_n)
\]

We can now define a notion of enrichment over a \(\FreeCat\)-multicategory.

\begin{definition}[enriched generalized multicategory, {\autocite[Definition~{6.8.1}]{Leinster04HigherOperads}}]
Let \(\MCV\) be an \(\FreeCat\)-multicategory.
A \Def{\(\MCV\)-enriched \(\IdMon[\E]\)-multicategory} is given by an object \(X\) of \(\E\) together with a map \(\IndP[X] \to \MCV\) of \(\FreeCat\)-multicategories.
\end{definition}

We shall now induce a \(\FreeCat\)-multicategory \(\MCV\) (a \(\FreeCat\)-operad, to be more specific) from the monoidal category \(\IntV\) in \(\E\).
The construction is designed so that \(\MCV\)-enriched \(\IdMon\)-multicategories shall correspond to \(\IntV\)-enriched internal categories.

Let \(\IntV_0\) be the unique \(\E\)-graph \(\Terminal \leftleftarrows V_{0}\).
Consider the category in \(\E\) on \(\IntV_0\) given by the monoidal structure on \(\IntV\), i.e., having composition \(\Comp[\IntV_0] \DefEq \MonProd[\IntV]\) and identity \(\Id[\IntV_0] \DefEq \MonUnit[\IntV]\).
Then, by adjunction, there is a functor \(\MonProd[0] \colon \FreeCat(\IntV_0) \to \IntV_0\) of categories in \(\E\)
induced by the  morphism of \(\E\)-graphs \(\Id(\IntV_0)\) and which is trivial on the object's component.

Let \(\IntV_1\) be the graph \(V_{0}^{+} \leftleftarrows V_{1}^{+}\) given as the pullback of \(\E\)-graphs
\[
	\begin{tikzcd}
		\IntV_1
				\ar[r, "\pi_2"]
				\ar[d, "\pi_1"']
				\ar[dr, phantom, "\lrcorner" very near start]
			& (\Terminal \leftleftarrows V_1)
				\ar[d, "{\Source[\IntV]}"] \\
		\FreeCat(\IntV_0) \ar[r, "{\MonProd[0]}"']
			& \IntV_0
	\end{tikzcd}
	=
	\left(
	\begin{tikzcd}
		V_{0}^{+}
				\ar[r, "\pi_2"]
				\ar[d, "\pi_1"']
				\ar[dr, phantom, "\lrcorner" very near start]
			& \Terminal
				\ar[d] \\
		\Terminal
				\ar[r]
			& \Terminal
	\end{tikzcd}
	\leftleftarrows
	\begin{tikzcd}
		V_{1}^{+}
				\ar[r, "\pi_2"]
				\ar[d, "\pi_1"']
				\ar[dr, phantom, "\lrcorner" very near start]
			& V_1 \ar[d, "{\Source[\IntV]}"] \\
		{\FreeCat(\IntV_0)}_1
				\ar[r, "{\MonProd[0]}"']
			& V_0
	\end{tikzcd}
	\right).
\]
Notice that \(V_{0}^{+} = \Terminal\).
In the internal language of \(\E\), the pullback \(V_{1}^{+}\) has elements those pairs \((v_i, f) \colon {\FreeCat(\IntV_0)}_1 \Prod V_1\) such that \(\Source[\IntV](f) = \MonProd[0](v_i)\).
Thus, informally speaking, \(\IntV_1\) is the graph with one vertex and arrows \(f \colon v_0 \MonProd[\IntV] \dots \MonProd[\IntV] v_n \to v\) of \(\IntV\) as edges, and also tracking the sequence \(v_0, \dots, v_n\) which yields the domain of \(f\).
There is an \(\E\)-category structure on \(\IntV_1\) with composition defined as
\[
	\begin{tikzcd}
		V_{1}^{+} \Prod V_{1}^{+}
				\ar[rr, "{\pi_2 \Prod \pi_2}"]
				\ar[dr, dashed, "{\Comp[\IntV_1]}" description]
				\ar[dd, "{\pi_1 \Prod \pi_1}"']
		 	& & V_1 \Prod V_1
				\ar[d, "{\MonProd[\IntV]}"] \\
			& V_{1}^{+}
				\ar[r, "\pi_2"]
				\ar[d, "\pi_1"']
				\ar[dr, phantom, "\lrcorner" very near start]
			& V_1 \ar[d, "{\Source[\M]}"] \\
		{\FreeCat(\IntV_0)}_1 \Prod {\FreeCat(\IntV_0)}_1
				\ar[r, "{\Comp[\FreeCat(\IntV_0)]}"']
			& {\FreeCat(\IntV_0)}_1
				\ar[r, "{\MonProd[0]}"']
			& V_0
	\end{tikzcd}
\]
and identity defined as follows.
\begin{equation*}
	\begin{tikzcd}
		\Terminal
				\ar[rrd, bend left, "{\Id[\IntV] \MonUnit[\IntV]}"]
				\ar[dr, dashed, "{\Id[\IntV_1]}" description]
				\ar[dd, "{\MonUnit[\IntV]}"'] \\
		& V_{1}^{+}
				\ar[r, "\pi_2"]
				\ar[d, "\pi_1"']
				\ar[dr, phantom, "\lrcorner" very near start]
			& V_1
				\ar[d, "{\Source[\IntV]}"] \\
		V_0
				\ar[r, "{{(\FreeCatUnit[\IntV_0])}_1}"']
			& {\FreeCat(\IntV_0)}_1
				\ar[r, "{\MonProd[0]}"']
			& V_0
	\end{tikzcd}
\end{equation*}
In the internal language, the composition is defined as
\[
	(v_i, f), (w_j, g) \colon V_1^+ \TiC (w_j, g) \Comp[\IntV_1] (v_i, f) \DefEq (v_i \Comp[\FreeCat(\IntV_0)] w_j, f \MonProd[\IntV] g)
\]
or, informally speaking,
\[
	\big( v_0 \MonProd \dots \MonProd v_n \xrightarrow{f} v \big)
		\Comp[\IntV_1]
	\big( w_0 \MonProd \dots \MonProd w_m \xrightarrow{g} w \big)
		\DefEq
	\big( v_0 \MonProd \dots \MonProd v_n \MonProd w_0 \MonProd \dots \MonProd w_m \xrightarrow{f \MonProd g} v \MonProd w \big).
\]
Then by adjunction there is a functor
\begin{equation*}
	\MonProd[1] \colon \FreeCat(\IntV_1) \to \IntV_1
\end{equation*}
of categories in \(\E\) induced by the morphism of \(\E\)-graphs \(\Id(\IntV_1)\) and which is trivial on objects.

We can finally define the \(\FreeCat\)-multicategory \(\MCV\) induced by \(\IntV\).
Its underlying \(\FreeCat\)-graph is the following.
\[
	\begin{tikzcd}[column sep = small]
		& \IntV_1
			\ar[dl, "{\MCSource[\MCV] = \pi_1}"']
			\ar[dr, "{\MCTarget[\MCV] = \Target[\IntV] \pi_2}"] \\
		\FreeCat(\IntV_0)
			& & \IntV_0
	\end{tikzcd}
\]
The composition \(\MCComp[\MCV] \colon \IntV_1 \Pullback[][\FreeCat(\IntV_0)] \FreeCat(\IntV_1) \to \IntV_1\) is defined by
\begin{equation*}
	\begin{tikzcd}
		\IntV_1 \Pullback[][\FreeCat(\IntV_0)] \FreeCat(\IntV_1)
				\ar[r, "{\IntV_1 \Pullback[][\IntV_0] \MonProd[1]}"]
				\ar[dr, dashed, "{\MCComp[\MCV]}" description]
				\ar[d, "{\pi_2}"']
			& \IntV_1 \Pullback[][\IntV_0] \IntV_1
				\ar[r, "{\pi_2 \Prod \pi_2}"]
			& (\Terminal \leftleftarrows V_1 \Pullback[][V_0] V_1)
				\ar[d, "{\Comp[\IntV]}"] \\
		\FreeCat(\IntV_1)
				\ar[d, "{\FreeCat(\pi_1)}"']
			& \IntV_1
				\ar[r, "\pi_2"]
				\ar[d, "\pi_1"']
				\ar[dr, phantom, "\lrcorner" very near start]
			& (\Terminal \leftleftarrows V_1 \ar[d, "{\Source[\IntV]}"]) \\
		\FreeCat\FreeCat(\IntV_0)
				\ar[r, "{\FreeCatMult[\IntV_0]}"']
			& \FreeCat(\IntV_0)
				\ar[r, "{\MonProd[0]}"']
			& \IntV_0
	\end{tikzcd}
\end{equation*}
and the identity \(\MCId[\MCV] \colon \IntV_0 \to \IntV_1\) is defined by the following diagram.
\begin{equation*}
	\begin{tikzcd}
		\IntV_0
				\ar[rrd, bend left, "{\Id[\IntV]}"]
				\ar[dr, dashed, "{\MCId[\MCV]}" description]
				\ar[ddr, bend right, "{\FreeCatUnit[\IntV_0]}"'] \\
		& \IntV_1
				\ar[r, "\pi_2"]
				\ar[d, "\pi_1"']
				\ar[dr, phantom, "\lrcorner" very near start]
			& (\Terminal \leftleftarrows V_1 \ar[d, "{\Source[\IntV]}"]) \\
		& \FreeCat(\IntV_0)
				\ar[r, "{\MonProd[0]}"']
			& \IntV_0
	\end{tikzcd}
\end{equation*}
In the internal language, given \((A_i, f) \colon V_1^+\) and \((A_j^i, f_i) \colon {\FreeCat(\IntV_1)}_1\) such that \(A_i = \Target[\IntV](f_i) \colon {\FreeCat(\IntV_0)}_1\),
the multi-category composition on the arrows' component is defined as
\[
	(A_i, f) \MCComp[\MCV] (A_j^i, f_i) \DefEq \Big( \FreeCatMult[\IntV_0] (A_j^i), f \Comp[\IntV] \big(\MonProd[1](f_i)\big) \Big) \colon V_1^+
\]
(and it is trivial on the object component).
Informally, \(f \MCComp[\MCV] (f_0, \dots, f_n)\) is represented as follows.
\[
	\begin{tikzcd}[column sep = 0]
		A^0_0 \MonProd \dots \MonProd A^0_{m_0}
			\ar[d, "{f_0}"]
		& {}
			\ar[d, phantom, "{\MonProd \dots \MonProd}" description]
		& A^n_0 \MonProd \dots \MonProd A^n_{m_n}
			\ar[d, "{f_n}"'] \\
		A_0
		& {}
			\ar[d, "{f}"]
		& A_n \\
		& A
	\end{tikzcd}
\]

Now that we have an enriching multicategory, we can give the two inverse constructions relating enriched generalized multicategories and internal enriched categories.

First, we shall build a \(\MCV\)-enriched \(\IdMon\)-multicategory \(\MCX\) out of a \(\IntV\)-enriched category \(\X\).
That is given by an object of \(\E\), which we choose to be the underlying object of objects \(X\) of \(\X\), and a morphism \(\IndP[X] \to \MCV\) of \(\FreeCat\)-categories; that means a commutative diagram in \(\Graph[\E]\)
\begin{equation}
	\label{eqn:M-enrchd-id-cat}
	\begin{tikzcd}[column sep = small]
		& \FreeCat\Ind(X)
				\ar[dl, equal]
				\ar[dr, "{\Comp[\Ind(X)]}"]
				\ar[dd, "{\MCX_1}"] \\
		\FreeCat\Ind(X)
				\ar[dd, "{\FreeCat\MCX_0}"']
			& & \Ind(X)
				\ar[dd, "{\MCX_0}"] \\
		& \IntV_1
				\ar[dl, "{\Source[\MCV]}"']
				\ar[dr, "{\Target[\MCV]}"] \\
		\FreeCat(\IntV_0)
			& & \IntV_0
	\end{tikzcd}
\end{equation}
satisfying the functoriality axioms.
Let \(\MCX_0\) be the morphism \(\Ind(X) \to \IntV_0\) of graphs in \(\E\) given by \(\Hom[\X]\) on edges.
The arrow \(\MCX_1 \colon \FreeCat\Ind(X) \to \IntV_1\) shall require a two-step construction.

Consider the \(\E\)-graph \(\IntV_{\X} \colon {(\IntV_{\X})}_0 \leftleftarrows {(\IntV_{\X})}_1\), given by
\begin{equation*}
	\begin{tikzcd}
		\IntV_{\X}
				\ar[r, "\pi_2"]
				\ar[d, "\pi_1"']
				\ar[dr, phantom, "\lrcorner" very near start]
			& \IntV_1
				\ar[d, "{\Target[\IntV]\pi_2}"] \\
		\Ind(X)
				\ar[r, "{\MCX_0}"']
			& \IntV_0
	\end{tikzcd}
	=
	\left(
	\begin{tikzcd}
		{(\IntV_{\X})}_0
				\ar[r, "\pi_2"]
				\ar[d, "\pi_1"']
				\ar[dr, phantom, "\lrcorner" very near start]
			& \Terminal
				\ar[d] \\
		X
				\ar[r]
			& \Terminal
	\end{tikzcd}
	\leftleftarrows
	\begin{tikzcd}
		{(\IntV_{\X})}_1
				\ar[r, "\pi_2"]
				\ar[d, "\pi_1"']
				\ar[dr, phantom, "\lrcorner" very near start]
			& V_{1}^{+}
				\ar[d, "{\Target[\IntV] \pi_2}"] \\
		X \Prod X
				\ar[r, "{\Hom[\X]}"']
			& V_0
	\end{tikzcd}
	\right)
\end{equation*}
and notice that \({(\IntV_{\X})}_0 = X\), while, in the internal language, \({(\IntV_{\X})}_1\) has elements those tuples \((x_0 \colon X, x_1 \colon X, (v_i) \colon {\FreeCat(\IntV_0)}_1, f \colon V_1)\) such that \(\Source[\IntV](f) = \MonProd[0](v_i)\) and \(\Target[\IntV](f) = \Hom[\X](x_0, x_1)\).
Informally, \(\IntV_{\X}\) is the graph with elements of \(X\) as vertices, and arrows \(f \colon v_0 \MonProd \dots \MonProd v_n \to \Hom[\X](x_0, x_1)\) as edges from the vertex \(x_0\) to the vertex \(x_1\).
The graph \(\IntV_{\X}\) has the structure of an internal category in \(\E\).
Its composition \(\Comp[\IntV_{\X}] \colon {(\IntV_{\X})}_1 \Pullback[][X] {(\IntV_{\X})}_1 \to {(\IntV_{\X})}_1\) is the unique arrow defined by the following diagram.
\begin{equation*}
	\begin{tikzcd}[row sep = large]
		V_1
			& V_{1} \Pullback[][V_0] V_{1}
				\ar[l, "{\pi_2}"']
				\ar[d, "{\pi_1}"]
				\ar[r, "{\Comp[\IntV]}"]
			& V_{1} \\
		V_{1} \Prod V_{1}
				\ar[r, "{\MonProd[\IntV]}"]
			& V_1 \\
		V_{1}^{+} \Prod V_{1}^{+}
				\ar[u, "{\pi_2 \Prod \pi_2}"]
				\ar[r, "{\pi_1 \Prod \pi_1}"]
			& \FreeCat(\IntV_0)_1 \Prod \FreeCat(\IntV_0)_1
				\ar[r, "{\Comp[\FreeCat(\IntV_0)]}"]
			& \FreeCat(\IntV_0)_1 \\
		{(\IntV_{\X})}_1 \Pullback[][X] {(\IntV_{\X})}_1
				\ar[u, "{{(\pi_2)}_1 \Prod {(\pi_2)}_1}"]
				\ar[urr, dashed]
				\ar[d, "{{(\pi_1)}_1 \Pullback[][X] {(\pi_1)}_1}"']
				\ar[r, dashed, "{\Comp[\IntV_{\X}]}" description]
			& {(\IntV_{\X})}_1
				\ar[r, "{(\pi_2)}_1"]
				\ar[d, "{(\pi_1)}_1"']
				\ar[dr, phantom, "\lrcorner" very near start]
			& V_{1}^{+}
				\ar[uuu, bend right = 45, "{\pi_2}"]
				\ar[u, "{\pi_1}"]
				\ar[d, "{\Target[\IntV] \pi_2}"] \\
		X \Prod X \Prod X
				\ar[r, "{(\pi_1, \pi_3)}"']
				\ar[uuuu, bend left = 90, "{\Comp[\X]}"']
			& X \Prod X \ar[r, "{\Hom[\X]}"']
			& V_0
	\end{tikzcd}
\end{equation*}
Observe that there is a morphism \(h \colon X \to V_{1}^{+}\) defined via the universal property of the pullback by the arrows \(\FreeCatUnit(V_0) \MonUnit[\IntV] ! \colon X \to {\FreeCat(\IntV_0)}_1\) and \(\Id[\X] \colon X \to V_1\).
Then, the identity \(\Id[\IntV_{\X}] \colon X \to {(\IntV_{\X})}_1\) is given as the unique arrow defined by the following diagram.
\begin{equation*}
	\begin{tikzcd}
		X
			\ar[drr, bend left, "{h}"]
			\ar[ddr, bend right, "{\Delta_{X}}"'] \ar[dr, dashed, "{\Id[\IntV_{\X}]}" description] \\
		& {(\IntV_{\X})}_1
				\ar[r, "{(\pi_2)}_1"]
				\ar[d, "{(\pi_1)}_1"']
				\ar[dr, phantom, "\lrcorner" very near start]
			& V_{1}^{+}
				\ar[d, "{\Target[\IntV] \pi_2}"] \\
		& X \Prod X
				\ar[r, "{\Hom[\X]}"']
			& V_0
	\end{tikzcd}
\end{equation*}
In the internal language, the composition is given by
\[
	(x_1, x_2, B_j, g) \Comp[\IntV_{\X}] (x_0, x_1, A_i, f) \DefEq (x_0, x_2, A_i \Comp[\FreeCat(\IntV_0)] B_j, (\Comp[\X](x_0, x_1, x_2)) \Comp[\IntV] (f \MonProd[\IntV] g)).
\]
Informally, the composition is given by composing the monoidal product of \(f\) and \(g\) with the composition morphism of \(\X\), as shown in the diagram below.
\[
	\begin{tikzcd}[column sep = 0]
		B_0 \MonProd \dots \MonProd B_m
			\ar[d, "{g}"]
		& {} \ar[d, phantom, "{\MonProd}" label]
		& A_0 \MonProd \dots \MonProd A_n
			\ar[d, "{f}"'] \\
		\Hom[\X](x_1, x_2)
		& {}
			\ar[d, "{\Comp[\X](x_0, x_1, x_2)}"]
		& \Hom[\X](x_0, x_1) \\
		& \Hom[\X](x_0, x_2)
	\end{tikzcd}
\]

Internal functors \(\FreeCat\Ind(X) \to \IntV_{\X}\) in \(\E\) correspond bijectively, by adjointness, to morphisms \(\Ind(X) \to \IntV_{\X}\) of graphs in \(\E\).
Observe that there is a morphism of graphs \(H \colon \IntV_0 \to \IntV_1\) defined, on the edges component, via the universal property of the pullback by the arrows \(\Id[\IntV] \colon V_0 \to V_1\) and \((\FreeCatUnit[\IntV_0])_1 \colon V_0 \to {\FreeCat(\IntV_0)}_1\).
Intuitively, \(H (v) \DefEq ((v), \Id[\IntV](v) \colon v \to v)\) (where \((v)\) is a list with one entry, \(v\)).
Then, let \(\Comp[\X] \colon \FreeCat\Ind(X) \to \IntV_{\X}\) extend the morphism of graphs
\[
	\Ind(X) \xrightarrow{( \Id(\Ind(X)), \Hom[\IntV] )} \Ind(X) \Pullback[][\IntV_0] \IntV_0 \xrightarrow{\Id(\Ind(X)) \Prod H} \Ind(X) \Pullback[][\IntV_0] \IntV_1 \Iso \IntV_{\X}
\]
that is, intuitively, the functor sending \(x\) to \(\Id[\X](x)\) and \((x_0, x_1)\) to \(\Id[\IntV](\Hom[\X](x_0, x_1))\).
Then \(\Comp[\X](x_0, \dots, x_n)\) is the (iterated) composition
\[
	\Hom[\X](x_{n-1}, x_n) \MonProd \dots \MonProd \Hom[\X](x_0, x_1) \xrightarrow{\Comp[\X]} \Hom[\X](x_0, x_n)
\]
(remember that the composition of \(\X\) is associative and the monoidal associator is an isomorphism, so this operation is uniquely defined up to isomorphism).
By applying the forgetful functor, consider \(\Comp[\X]\) as a mere morphism of graphs in \(\E\).
Let then \(\MCX_1 \colon \FreeCat\Ind(X) \to \IntV_1\) be the composition of \(\Comp[\X]\) with the morphism of graphs \(\pi_2 \colon \IntV_{\X} \to \IntV_1\) in \(\E\).

It is a matter of routine calculations to prove that the axioms for enriched generalized multicategories are satisfied by the \(\MCX\) so constructed.
We can then state the following proposition.

\begin{proposition}\label{prop:iec-to-egm}
If \(\X\) is a \(\IntV\)-category, then \(\MCX\) is a \(\MCV\)-enriched \(\IdMon\)-multicategory.
\end{proposition}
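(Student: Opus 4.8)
The plan is to check, one at a time, the three conditions required of a morphism of $\FreeCat$-multicategories $\IndP[X] \to \MCV$ for the pair $(\MCX_0, \MCX_1)$ constructed above: commutativity of the square \eqref{eqn:M-enrchd-id-cat} in $\Graph[\E]$, and the identity and composition functoriality axioms. Since the object component of $\MCX$ is $\Id(X)$, each condition is an equation between morphisms of $\E$-graphs, which can be verified in the internal language by evaluating on a path $(x_0, \dots, x_n)$ of $\Ind(X)$. The computational fact underlying everything is that, by the construction of $\Comp[\X] \colon \FreeCat\Ind(X) \to \IntV_{\X}$ (the extension, along the free-category adjunction, of the graph morphism built from $H$), $\MCX_1$ sends such a path to the $\Comp[\X]$-iterated composite $\Hom[\X](x_{n-1}, x_n) \MonProd \dots \MonProd \Hom[\X](x_0, x_1) \xrightarrow{\Comp[\X]} \Hom[\X](x_0, x_n)$ of $\X$, packaged together with its list of domains, regarded as an element of $\IntV_1$.

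Granting this, commutativity of \eqref{eqn:M-enrchd-id-cat} is immediate from the definitions of the two legs $\MCSource[\MCV] = \pi_1$ and $\MCTarget[\MCV] = \Target[\IntV]\pi_2$ of $\MCV$: the former, applied to $\MCX_1(x_0, \dots, x_n)$, returns the domain list $\big( \Hom[\X](x_{n-1}, x_n), \dots, \Hom[\X](x_0, x_1) \big)$, which is exactly $\FreeCat\MCX_0$ evaluated on $(x_0, \dots, x_n)$, while the latter returns $\Hom[\X](x_0, x_n) = \MCX_0\big( \Comp[\Ind(X)](x_0, \dots, x_n) \big)$, since $\Comp[\Ind(X)]$ collapses the path to the edge $(x_0, x_n)$. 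For the identity axiom $\MCX_1 \circ \MCId[\IndP[X]] = \MCId[\MCV] \circ \MCX_0$ one evaluates on a single edge $(x_0, x_1)$: being a length-one path, it is sent by $\Comp[\X]$ to the image of the generating graph morphism, namely $\Id[\IntV]\Hom[\X](x_0, x_1)$ with domain list $\big( \Hom[\X](x_0, x_1) \big)$, which is precisely $\MCId[\MCV]$ applied to $\MCX_0(x_0, x_1) = \Hom[\X](x_0, x_1)$. Thus this axiom is purely a matter of unwinding definitions.

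The substantive point is the composition axiom, which asserts that $\MCX_1 \circ \MCComp[\IndP[X]]$ equals $\MCComp[\MCV]$ precomposed with the map of pullbacks induced by $\MCX_1$ and $\FreeCat\MCX_1$. Here I would use that $\Comp[\X] \colon \FreeCat\Ind(X) \to \IntV_{\X}$ is, before forgetting to graphs, an internal \emph{functor}, hence preserves composition. Since $\MCComp[\IndP[X]] = \FreeCatMult[\Ind(X)] \circ \pi_2$ and $\FreeCatMult[\Ind(X)]$ sends a composable list of paths to their concatenation --- which is their iterated composite in the free category $\FreeCat\Ind(X)$ --- applying $\Comp[\X]$ turns this into the corresponding iterated composite in $\IntV_{\X}$; unwinding the latter through the explicit description of $\Comp[\IntV_{\X}]$, namely monoidal product of the arrows followed by the ternary composition of $\X$, one reads off exactly the outcome of forming $\MonProd[1]$ of the $\FreeCat\MCX_1$-images and then composing, in $\IntV$, with the outer arrow $\MCX_1(x_0, \dots, x_n)$ --- which is the recipe defining $\MCComp[\MCV]$. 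In the language of $\X$ this is its generalized associativity: composing a string of homs at once equals composing it in consecutive blocks and then composing the blocks. It follows from the binary associativity axiom \cref{eq:int_enr_cat_axiom_associativity} together with the unit laws \cref{eq:int_enr_cat_axiom_left_unit_law} and \cref{eq:int_enr_cat_axiom_right_unit_law} --- the latter absorbing the spurious factor of $\MonUnit[\IntV]$ contributed by length-zero blocks --- by an induction carried out in the internal language of $\E$.

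I expect the only real obstacle to be bookkeeping: iterated monoidal products and iterated composites are canonically defined only up to reassociation, so one must verify that the occurrences of $\MonAssoc[\IntV]$ on the two sides of each axiom cancel. This is exactly what the internalized Mac Lane coherence theorem guarantees, and it has in effect already been built into the set-up, since $\MonProd[0]$, $\MonProd[1]$ and $\Comp[\X]$ are well-defined functors out of free categories precisely because their targets $\IntV_0$, $\IntV_1$ and $\IntV_{\X}$ are strictly associative categories assembled from $\IntV$ through its monoidal coherence. Once these constructions are granted, the verification of the three axioms is the routine internal induction sketched above.
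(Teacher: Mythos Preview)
Your proposal is correct and aligns with the paper's approach: the paper itself offers no proof beyond the sentence ``It is a matter of routine calculations to prove that the axioms for enriched generalized multicategories are satisfied by the \(\MCX\) so constructed,'' and your sketch is precisely how one would carry out those calculations, with the functoriality of \(\Comp[\X] \colon \FreeCat\Ind(X) \to \IntV_{\X}\) encoding the generalized associativity needed for the composition axiom. One small imprecision: your phrase ``the object component of \(\MCX\) is \(\Id(X)\)'' is unclear (the vertex components of both \(\MCX_0\) and \(\MCX_1\) are the unique maps \(X \to \Terminal\), which is presumably what makes the verification reduce to edges), and the ``induction in the internal language'' is more accurately the universal property of the free-category adjunction, but neither point affects the substance.
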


We shall now give the definition of functor of \(\MCV\)-enriched \(\IdMon\)-multicategories in an informal style.
For more details about the notion of natural transformation of functors of \(T\)-multicategories and the notion of functor of \(\MCV\)-enriched \(\IdMon\)-multicategories, we refer to \textcite{leinster1999generalized}.

\begin{definition}[{Functor of \(\MCV\)-enriched \(\IdMon\)-multicategories \autocite[{Definition~1.3.2}]{leinster1999generalized}}]
Let \(\MCX\) and \(\MCY\) be \(\MCV\)-enriched \(\IdMon\)-multicategories.
A \Def{functor} \(F \colon \MCX \to \MCY\) is given by an object component \(F_0 \colon X \to Y\) and a morphism component \(F_1 \colon \Ind(X) \to \IntV_1\) such that, intuitively,
\(F_1(x_0, x_1) \colon \MCX_0(x_0, x_1) \to \MCY_0(F_0(x_0), F_0(x_1))\)
and
\[
	\begin{tikzcd}[column sep = 0]
		\MCX_0(x_0, x_1)
			\ar[d, "{F_1(x_0, x_1)}"]
		& {}
			\ar[d, phantom, "{\MonProd \dots \MonProd}" description]
		& \MCX_0(x_{n-1}, x_n)
			\ar[d, "{F_1(x_{n-1}, x_n)}"] \\
		\MCY_0(F_0(x_0), F_0(x_1))
		& {}
			\ar[d, "{\MCComp[\MCY]}"]
		& \MCY_0(F_0(x_{n-1}), F_0(x_n)) \\
		& \MCY_0(F_0(x_0), F_0(x_n))
	\end{tikzcd}
\]
is equal to
\[
\MCX_0(x_0, x_1) \MonProd \dots \MonProd \MCX_0(x_{n-1}, x_n)
	\xrightarrow{\MCComp[\MCY]}
\MCX_0(x_0, x_n)
	\xrightarrow{F_1(x_{0}, x_n)}
\MCY_0(F_0(x_{0}), F_0(x_n)).
\]
\end{definition}

Let \(\MCat[\IdMon][\MCV]\) be the category of \(\MCV\)-enriched \(\IdMon\)-multicategories and their functors.
Then we can thus extend the construction from \Cref{prop:iec-to-egm} to a functor.

\begin{proposition}\label{prop:VCat-to-VIdMulticat}
Then there is a functor \(\Cat[\E][\IntV] \to \MCat[\IdMon][\MCV]\).
\end{proposition}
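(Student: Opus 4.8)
The plan is to extend the object-level assignment $\X \mapsto \MCX$ of \Cref{prop:iec-to-egm} to morphisms and then to check functoriality. Given a $\IntV$-functor $F \colon \X \to \Y$ I would define a functor $\MC{F} \colon \MCX \to \MCY$ of $\MCV$-enriched $\IdMon$-multicategories whose object component is $F_0 \colon X \to Y$ and whose morphism component $\MC{F}_1 \colon \Ind(X) \to \IntV_1$ packages the morphism component $F_1 \colon X \Prod X \to V_1$ of $F$. Concretely, since the vertex object of the $\E$-graph $\IntV_1$ is $V_0^+ = \Terminal$, a graph morphism $\Ind(X) \to \IntV_1$ is just an arrow $X \Prod X \to V_1^+$; I take the one induced by the universal property of the pullback defining $V_1^+$ from the ``one-element list'' ${(\FreeCatUnit[\IntV_0])}_1 \Hom[\X] \colon X \Prod X \to V_0 \to {\FreeCat(\IntV_0)}_1$ together with $F_1 \colon X \Prod X \to V_1$. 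These are compatible because, in context $x_0, x_1 \colon X$, one has $\Source[\IntV] F_1(x_0, x_1) = \Hom[\X](x_0, x_1)$, which is also the image under $\MonProd[0]$ of the one-element list on $\Hom[\X](x_0, x_1)$; intuitively $\MC{F}_1(x_0, x_1)$ is the unary $\MCV$-operation $F_1(x_0, x_1) \colon \Hom[\X](x_0, x_1) \to \Hom[\Y](F_0 x_0, F_0 x_1)$.

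Next I would verify that $\MC{F}$ satisfies the axioms for a functor of $\MCV$-enriched $\IdMon$-multicategories. Unwinding the definition in this situation, the requirement is exactly that, for every finite list $x_0, \dots, x_n$ of points of $X$, the iterated $\MCV$-composite of the unary operations $\MC{F}_1(x_i, x_{i+1})$ precomposed with the composition operation $\MCComp[\MCX]$ of $\MCX$ agrees with the composition operation $\MCComp[\MCY]$ of $\MCY$ postcomposed with $\MC{F}_1(x_0, x_n)$ (the case $n = 0$ being preservation of $\MCId[\MCV]$). Both sides reduce to statements about $\IntV$: on arrows, $\MCComp[\MCX]$ and $\MCComp[\MCY]$ are iterated monoidal products followed by the iterated compositions of $\X$, respectively $\Y$, so the claim is the $n$-fold version of the functoriality square \Cref{eq:int_enr_cat_axiom_functoriality}. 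I would prove it by induction on $n$: the base cases $n = 1$ and $n = 0$ are, respectively, trivial and \Cref{eq:int_enr_cat_axiom_functoriality-id}, while the inductive step uses one instance of \Cref{eq:int_enr_cat_axiom_functoriality}, the inductive hypothesis, the rebracketing supplied by the associativity axiom \Cref{eq:int_enr_cat_axiom_associativity} of $\X$, and the naturality of the monoidal structure of $\IntV$ — precisely the bookkeeping already exploited in \Cref{prop:iec-to-egm}.

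Finally I would check that the assignment is functorial. On object components this is immediate, since composition in $\Cat[\E][\IntV]$ and in $\MCat[\IdMon][\MCV]$ both act as composition of the underlying maps in $\E$. On morphism components, the $\MCV$-composition of two consecutive unary operations is just composition in $\IntV$, so the morphism component of $\MC{G} \circ \MC{F}$ sends $(x_0, x_1)$ to $G_1(F_0 x_0, F_0 x_1) \Comp[\IntV] F_1(x_0, x_1)$, which is by definition the morphism component ${(G F)}_1(x_0, x_1)$ of the composite $\IntV$-functor; similarly the morphism component of the identity multicategory functor of $\MCX$ is $\Id[\IntV] \Hom[\X]$, which is the morphism component of the identity $\IntV$-functor on $\X$. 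Hence the multicategory functor induced by $G F$ is $\MC{G} \circ \MC{F}$, that induced by the identity is the identity, and we obtain the desired functor $\Cat[\E][\IntV] \to \MCat[\IdMon][\MCV]$. I expect the second step to be the main obstacle: the $n$-ary functoriality axiom must be matched, through the pullback presentations of $\IntV_1$ and $\MCV$, against Leinster's definition, and carrying out the induction cleanly requires care in identifying the multicategory composites with iterated monoidal products of arrows of $\IntV$.
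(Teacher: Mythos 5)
Your proposal is correct and follows essentially the same route as the paper, whose proof is a one-liner: take $F_0$ as the object component and let the morphism component $\Ind(X) \to \IntV_1$ be induced by $F_1 \colon X \Prod X \to V_1$ (exactly your packaging through the pullback defining $V_1^{+}$). The additional verifications you outline — the $n$-ary functoriality condition via induction using \Cref{eq:int_enr_cat_axiom_functoriality}, \Cref{eq:int_enr_cat_axiom_functoriality-id} and associativity, and the compatibility with composition and identities — are precisely the routine calculations the paper leaves implicit, and they check out.
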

\begin{proof}
Given a functor \(F \colon \X \to \Y\) of \(\IntV\)-enriched categories, there is a \(\MCV\)-enriched \(\IdMon\)-multicategory functor \(\MCX \to \MCY\) given by \(F_0 \colon X_0 \to Y_0\), and \(F_1 \colon \Ind(X) \to \IntV_1\) induced by \(F_1 \colon X \Prod X \to V_1\).
\end{proof}

We now consider the opposite construction, that of the internally enriched category induced by an enriched generalized multicategory.
Let \(\MCX \colon \IndP[X] \to \MCV\) be a \(\MCV\)-enriched \(\IdMon\)-multicategory as in \Cref{eqn:M-enrchd-id-cat}.
Then there is a \(\IntV\)-category \(\X\) on \(X\) whose hom is \(\Hom[\X] = {(\MCX_0)}_1 \colon X \Prod X = {\Ind(X)}_1~\to~V_0\), whose composition is
\[
	\begin{tikzcd}
		X \Prod X \Prod X \Iso {\Ind(X)}_1 \Pullback[\Source[\Ind(X)]][\Target[\Ind(X)]] {\Ind(X)}_1
			\ar[d, "{{(\FreeCatUnit[\Ind(X)])}_1 \Prod {(\FreeCatUnit[\Ind(X)])}_1}"] \\
		{(\FreeCat \Ind(X))}_1 \Pullback[\Source[\FreeCat \Ind(X)]][\Target[\FreeCat \Ind(X)]] {(\FreeCat \Ind(X))}_1
			\ar[d, "{\Comp[\FreeCat \Ind(X)]}"] \\
		{(\FreeCat \Ind(X))}_1
			\ar[d, "{{(\MCX_1)}_1}"] \\
		{(\IntV_1)}_1
			\ar[d, "{{(\pi_2)}_1}"] \\
		V_1
	\end{tikzcd}
\]
and whose identity \(\Id[\X]\) is
\[
		X =
		{\Ind(X)}_0
			\xrightarrow{{(\FreeCatUnit)}_{0}}
		{(\FreeCat\Ind(X))}_0
			\xrightarrow{\MCId[\FreeCat\Ind(X)]}
		{(\FreeCat\Ind(X))}_1
			\xrightarrow{{(\MCX_1)}_{1}}
		{(\IntV_1)}_1
			\xrightarrow{{(\pi_2)}_1}
		V_1.
\]

Given a functor \(F \colon \MCX \to \MCY\) of \(\MCV\)-enriched \(\IdMon\)-multicategory, there is a functor of \(\IntV\)-categories given by \(F_0 \colon X_0 \to Y_0\), and \(F_1 \colon X \Prod X \to V_1\) induced by \(F_1 \colon \Ind(X) \to \IntV_1\).

Thus, we have the converse of \Cref{prop:VCat-to-VIdMulticat}.

\begin{proposition}\label{prop:VIdMulticat-to-VCat}
There is a functor \(\MCat[\IdMon][\MCV] \to \Cat[\E][\IntV]\).
\end{proposition}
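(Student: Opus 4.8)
The plan is to verify that the two constructions described just above the statement --- the one sending a $\MCV$-enriched $\IdMon$-multicategory $\MCX$ to the internal $\IntV$-category $\X$, and the one sending a functor $F \colon \MCX \to \MCY$ to the pair consisting of $F_0$ and the morphism $X \Prod X \to V_1$ induced by the graph morphism $F_1 \colon \Ind(X) \to \IntV_1$ --- are well defined and strictly functorial. First I would check that $\X$ is genuinely an internal $\IntV$-enriched category, i.e.\ that the data $\Hom[\X] = {(\MCX_0)}_1$, $\Comp[\X]$ and $\Id[\X]$ satisfy \Cref{eq:int_enr_cat_axiom_associativity,eq:int_enr_cat_axiom_left_unit_law,eq:int_enr_cat_axiom_right_unit_law}. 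The key observation is that these three axioms are exactly the restrictions, to composable strings of length two and three and to the empty string, of the associativity and unitarity axioms satisfied by the $\FreeCat$-multicategory functor $\MCX \colon \IndP[X] \to \MCV$. Concretely I would unwind the diagram defining $\Comp[\X]$: the two copies of $\FreeCatUnit[\Ind(X)]$ turn $(x_0, x_1, x_2)$ into a composable pair of singleton strings, $\Comp[\FreeCat\Ind(X)]$ concatenates them into the length-two string on $(x_0, x_1, x_2)$, and ${(\pi_2)}_1 \Comp {(\MCX_1)}_1$ evaluates $\MCX$ on it; by commutativity of \eqref{eqn:M-enrchd-id-cat} together with the definitions of $\MCSource[\MCV]$, $\MCTarget[\MCV]$ and $\MonProd[0]$, the result is exactly a morphism $\Comp[\X](x_0, x_1, x_2) \colon \Hom[\X](x_1, x_2) \MonProd[\IntV] \Hom[\X](x_0, x_1) \to_{\IntV} \Hom[\X](x_0, x_2)$, as required by the definition.

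Associativity of $\Comp[\X]$ then follows from two facts: associativity of composition in the free category $\FreeCat\Ind(X)$, which guarantees that the length-three string on $(x_0, x_1, x_2, x_3)$ is obtained unambiguously and so equalises the two legs of \Cref{eq:int_enr_cat_axiom_associativity}; and the associativity axiom for $\MCX$ as a functor of $\FreeCat$-multicategories, which forces $\MCX_1$ to commute with $\MCComp[\MCV]$. Expanding the definitions of $\MCComp[\MCV]$, of $\MonProd[1]$, and of the $\E$-category structure on $\IntV_1$, this latter condition encodes precisely the associativity of $\Comp[\IntV]$ modulo the monoidal associator of $\IntV$, which is the content of \Cref{eq:int_enr_cat_axiom_associativity}. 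The unit laws \Cref{eq:int_enr_cat_axiom_left_unit_law,eq:int_enr_cat_axiom_right_unit_law} come from the unitarity axioms for $\MCX$ in the same spirit: $\MCId[\IndP[X]]$ selects the empty string, $\MCX$ carries it through $\MCId[\MCV]$ and hence, via the graph morphism $H$, to $\Id[\IntV]$, and $\Id[\X]$ is built from exactly this datum, so the multicategory unit laws unwind to the two triangles involving $\MonUnitorL[\IntV]$ and $\MonUnitorR[\IntV]$.

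Next I would check that, for a functor $F \colon \MCX \to \MCY$, the induced pair $(F_0, F_1)$ satisfies the internal enriched functor axioms \Cref{eq:int_enr_cat_axiom_functoriality,eq:int_enr_cat_axiom_functoriality-id}: these are the length-two and length-zero instances of the condition that $F$ commute with $\MCComp$ and $\MCId$. Strict functoriality of the whole assignment is then immediate: the object components compose and are identities as plain morphisms of $\E$, and the morphism components compose through the edge component of $\IntV_1$ exactly as the composition of $\IntV$-functors in $\Cat[\E][\IntV]$ prescribes, so identities are sent to identities and composites to composites. (One can moreover observe that this functor is inverse, up to natural isomorphism, to the one of \Cref{prop:VCat-to-VIdMulticat}, but that is not needed for the statement.)

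The main obstacle is bookkeeping rather than anything conceptual. The multicategory axioms for $\MCX$ are phrased in terms of the free-category monad $\FreeCat$, the structure maps $\MonProd[0]$ and $\MonProd[1]$, and the ``list of lists'' composition $\MCComp[\MCV]$ --- whose arrow component is built from $\Comp[\IntV]$ and $\MonProd[\IntV]$ together with the free-category structure --- whereas the internal enriched category axioms \Cref{eq:int_enr_cat_axiom_associativity,eq:int_enr_cat_axiom_left_unit_law,eq:int_enr_cat_axiom_right_unit_law} have the monoidal associator and unitors of $\IntV$ explicitly present. Matching the two sides requires carefully expanding the $\E$-category structure of $\IntV_1$ and the definition of $\MCComp[\MCV]$ on their binary and nullary fragments; the coherence of $\IntV$ is precisely what makes these expansions land where the enriched axioms demand, so no genuinely new verification beyond routine diagram chasing is involved.
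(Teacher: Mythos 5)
Your proposal matches the paper's treatment: the paper offers no separate proof beyond the construction displayed immediately before the statement (hom via \((\MCX_0)_1\), composition via \(\FreeCatUnit[\Ind(X)]\), \(\Comp[\FreeCat\Ind(X)]\) and \((\MCX_1)_1\), identity via the empty path), leaving the verification of \Cref{eq:int_enr_cat_axiom_associativity,eq:int_enr_cat_axiom_left_unit_law,eq:int_enr_cat_axiom_right_unit_law} and of strict functoriality as routine, which is exactly what you fill in. One small bookkeeping slip that does not affect the plan: \(\MCId[\IndP[X]] = \FreeCatUnit[\Ind(X)]\) picks out length-one strings (so preservation of \(\MCId\) tells you \(\MCX_1\) sends singleton paths to \(\Id[\IntV]\) on homs), whereas \(\Id[\X]\) is the image of the \emph{empty} path, i.e.\ of the identity of the internal category \(\FreeCat\Ind(X)\); consequently the unit triangles come from functoriality of \(\MCX\) with respect to \(\MCComp\) with empty inner strings inserted (together with how \(\MonProd[0]\) and \(\MonProd[1]\) treat empty lists), not directly from preservation of \(\MCId\).
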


It is then a matter of routine calculations to show that the constructions defined in the previous sections are mutually inverse (up to isomorphism), meaning that \(\MCV\)-enriched \(\IdMon\)-multicategories and \(\IntV\)-enriched categories are equivalent notions.

\begin{proposition}
The categories \(\MCat[\IdMon][\MCV]\) and \(\Cat[\E][\IntV]\) are equivalent.
\end{proposition}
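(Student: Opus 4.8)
The plan is to show that the functors of \Cref{prop:VCat-to-VIdMulticat,prop:VIdMulticat-to-VCat}, which we name $\Phi \colon \Cat[\E][\IntV] \to \MCat[\IdMon][\MCV]$ and $\Psi \colon \MCat[\IdMon][\MCV] \to \Cat[\E][\IntV]$ respectively, are mutually quasi-inverse, treating the two composites separately. For $\Psi \circ \Phi$ I expect to obtain the identity on the nose: starting from a $\IntV$-category $\X$, the multicategory $\Phi(\X) = \MCX$ has underlying object $X$ and $\MCX_0$ determined on edges by $\Hom[\X]$, so $\Psi(\MCX)$ again has underlying object $X$ and hom ${(\MCX_0)}_1 = \Hom[\X]$; its composition is computed by running the length-two path $(x_0, x_1, x_2)$ through $\MCX_1$, and since $\MCX_1$ was obtained by extending the iterated composition $\Comp[\X]$ over $\FreeCat\Ind(X)$, on a two-element list this is just $\Comp[\X](x_0, x_1, x_2)$; similarly the identity is recovered from $\Id[\X]$ via the unit of the free-category adjunction. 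The same bookkeeping on arrow components shows $\Psi \circ \Phi$ is the identity, so in particular it is naturally isomorphic to it.

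For $\Phi \circ \Psi$ the situation is only up to isomorphism. Given a $\MCV$-enriched $\IdMon$-multicategory $\MCX \colon \IndP[X] \to \MCV$, put $\X = \Psi(\MCX)$ and $\MCX' = \Phi(\X)$; the underlying objects coincide and $\MCX'_0$ agrees with $\MCX_0$ on edges because $\Hom[\X] = {(\MCX_0)}_1$. The delicate point is the arrow component $\MCX'_1 \colon \FreeCat\Ind(X) \to \IntV_1$, which by construction sends a path $(x_0, \dots, x_n)$ to a chosen bracketed iterate of the binary composite $\Comp[\X]$, whereas the original $\MCX_1$ need not be that exact bracketing. Here the functoriality of $\MCX$ as a map of $\FreeCat$-multicategories --- its compatibility with $\MCComp[\MCV]$ and $\MCId[\MCV]$ --- forces $\MCX_1$ on a path to be \emph{some} iterate of its restriction to length-one paths, that is of $\Comp[\X]$, and any two such iterates differ only by a composite of instances of the associator $\MonAssoc[\IntV]$ and the unitors of $\IntV$. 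This yields a canonical invertible $2$-cell $\MCX_1 \cong \MCX'_1$, and I would take the component at $\MCX$ of the natural isomorphism $\Phi \circ \Psi \cong \Id$ to be induced by it; then one checks it is a morphism of $\MCV$-enriched $\IdMon$-multicategories and natural in $\MCX$. The analogous --- now strictly commuting --- verifications for functors and natural transformations follow the same template.

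The hard part will be precisely this last verification: showing that the associator/unitor witnesses chosen for the various paths assemble into a single well-defined isomorphism that is compatible with $\MCComp[\MCV]$ and $\MCId[\MCV]$. This is exactly what Mac Lane's coherence theorem for $\IntV$ delivers --- available to us because $\IntV$ satisfies the internalized pentagon and triangle axioms --- since any two formal composites of associators and unitors with a common source and target are equal, so the isomorphism is independent of all bracketing choices and the required compatibility diagrams commute automatically. With that settled, the remaining tasks --- functoriality of $\Phi$ and $\Psi$ on $2$-cells and compatibility with the coherence isomorphisms --- are routine diagram chases in the internal language of $\E$, of the same flavour as those underlying \Cref{prop:iec-to-egm,prop:VCat-to-VIdMulticat,prop:VIdMulticat-to-VCat}.
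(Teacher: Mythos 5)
Your proposal takes essentially the same route as the paper, which simply declares that the functors of \Cref{prop:VCat-to-VIdMulticat,prop:VIdMulticat-to-VCat} are mutually inverse up to isomorphism by routine calculation; your sketch spells out exactly that verification, with one composite the identity and the other handled by Mac Lane coherence for the bracketing of iterated composites. (Only a minor slip: the binary composition \(\Comp[\X]\) is recovered from \(\MCX_1\) on length-two paths rather than length-one paths, which does not affect the argument.)
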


As a final corollary of the results of this paper, we can state the relationship between enriched generalized multicategories and enriched indexed categories.

\begin{corollary}
The categories \(\MCat[\IdMon][\MCV]\) and \(\SCat[\E][\Externalization{\IntV}]\) are equivalent.
\end{corollary}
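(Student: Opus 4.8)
The plan is to obtain the stated equivalence by composing two comparisons already established earlier in the paper, without introducing any new construction. The first ingredient is the proposition identifying internal \(\IntV\)-enriched categories with small \(\Externalization{\IntV}\)-categories: since their defining data literally coincide, that proposition gives an isomorphism of \((2\text{-})\)categories \(\Cat[\E][\IntV] \cong \SCat[\E][\Externalization{\IntV}]\). The second ingredient is the equivalence \(\MCat[\IdMon][\MCV] \simeq \Cat[\E][\IntV]\) provided by \Cref{prop:VCat-to-VIdMulticat} and \Cref{prop:VIdMulticat-to-VCat}, whose two functors are mutually inverse up to natural isomorphism (as discussed just above, where the propositions preceding this corollary are stated).

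First I would recall these two facts explicitly, then compose them: the functor \(\Cat[\E][\IntV] \to \SCat[\E][\Externalization{\IntV}]\), an isomorphism and hence in particular an equivalence, pre-composed with the equivalence \(\MCat[\IdMon][\MCV] \to \Cat[\E][\IntV]\), yields an equivalence \(\MCat[\IdMon][\MCV] \simeq \SCat[\E][\Externalization{\IntV}]\), which is exactly the assertion. If one wishes to be concrete, the composite sends a \(\MCV\)-enriched \(\IdMon\)-multicategory \(\MCX\) to the small \(\Externalization{\IntV}\)-category associated with the internal \(\IntV\)-category \(\X\) that \(\MCX\) induces, and conversely; functoriality and the inverse relations are inherited from those of the two comparisons being combined.

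There is essentially no obstacle here, precisely because all the substantive verifications have already been carried out: that the multicategory \(\MCX\) built from a \(\IntV\)-category is a genuine \(\MCV\)-enriched \(\IdMon\)-multicategory (\Cref{prop:iec-to-egm}), that the two passages between \(\Cat[\E][\IntV]\) and \(\MCat[\IdMon][\MCV]\) are functorial and mutually inverse, and that externalization matches internal enriched categories with small enriched indexed categories on the nose. The only point that requires a little care is bookkeeping about strictness: one comparison is a strict isomorphism while the other is only an equivalence, so the strongest statement one can make about the composite is that it is an equivalence of \((2\text{-})\)categories --- which is all the corollary claims. In particular I would not attempt, and the statement does not ask, to upgrade this to an isomorphism.
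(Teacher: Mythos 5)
Your proposal is correct and is exactly the paper's intended argument: the corollary follows by composing the equivalence \(\MCat[\IdMon][\MCV] \simeq \Cat[\E][\IntV]\) with the earlier identification of internal \(\IntV\)-enriched categories with small \(\Externalization{\IntV}\)-categories. One small caution: since no notion of natural transformation is available for enriched generalized multicategories, the multicategory comparison (and hence the composite) is only a 1-categorical equivalence, so you should drop the parenthetical suggestion that it might be an equivalence of 2-categories.
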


Were there a notion of natural transformation for enriched generalized multicategories making \(\Cat[\E][\IntV]\) and \(\MCat[\IdMon][\MCV]\) equivalent as 2-categories, we would also be able to deduce that \(\MCat[\IdMon][\MCV]\) is a full sub-2-category of \(\ICat[\E][\Externalization{\IntV}]\) as a corollary of \Cref{prop:V-cat-index-cat-fullsubcat}.
Unfortunately, such a notion seems to be absent from our sources, notably \textcite{leinster1999generalized,Leinster02GeneralizedEnrichment}.
We believe such a notion should exist and verify the above property, but such investigations are beyond the scope of this paper, so we leave it as an open question.

\addsec{Acknowledgments}

We wish to thank the Cambridge Trust and the EPSRC, for generously funding the doctoral research resulting in the material contained in this paper.
We also wish to acknowledge and thank our doctoral supervisor, Professor Martin Hyland, for the supervision and the guidance offered on mathematical and academic matters in the course of said research activity;
Professor Giuseppe Rosolini, for the original suggestion to enrich in the internal category of modest sets which inspired this work;
and Dr.~Alessio D'Alì, for kindly providing valuable feedback and suggestions on the paper's draft.

\printbibliography
\end{document}